\newtheorem{theorem}{Theorem}[section]
\newtheorem{lemma}[theorem]{Lemma}
\newtheorem{definition}[theorem]{Definition}
\newtheorem{proposition}[theorem]{Proposition}
\newtheorem{corollary}[theorem]{Corollary}
\newtheorem{remark}[theorem]{Remark}
\newtheorem{example}[theorem]{Example}
\def\R{\mathbb R}
\def\N{\mathbb N}
\def\1{\raisebox{2pt}{\rm{$\chi$}}}
\def\z{{\bf z}}
\numberwithin{equation}{section}
\def\Xint#1{\mathchoice
	{\XXint\displaystyle\textstyle{#1}}%
	{\XXint\textstyle\scriptstyle{#1}}%
	{\XXint\scriptstyle\scriptscriptstyle{#1}}%
	{\XXint\scriptscriptstyle\scriptscriptstyle{#1}}%
	\!\int}
\def\XXint#1#2#3{{\setbox0=\hbox{$#1{#2#3}{\int}$}
		\vcenter{\hbox{$#2#3$}}\kern-.5\wd0}}
\def\dashint{\Xint-}
\newcommand{\twopartdef}[4]
{
	\left\{
	\begin{array}{ll}
		#1 & #2 \\
		#3 & #4
	\end{array}
	\right.
}
\newcommand{\threepartdef}[6]
{
	\left\{
	\begin{array}{lll}
		#1 & \mbox{if } #2 \\
		#3 & \mbox{if } #4 \\
		#5 & \mbox{if } #6
	\end{array}
	\right.
}
\definecolor{violet(ryb)}{rgb}{0.53, 0.0, 0.69}
\begin{document}
	
\title[Gauss-Green Formula and Least Gradient Functions]{\bf The Anzellotti-Gauss-Green Formula and Least Gradient Functions in  Metric Measure Spaces}
	
\author[W. G\'{o}rny and J. M. Maz\'on]{Wojciech G\'{o}rny and Jos\'e M. Maz\'on}
	
\address{ W. G\'{o}rny: Faculty of Mathematics, Universit\"at Wien, Oskar-Morgerstern-Platz 1, 1090 Vienna, Austria; Faculty of Mathematics, Informatics and Mechanics, University of Warsaw, Banacha 2, 02-097 Warsaw, Poland;
\hfill\break\indent
{\tt  wojciech.gorny@univie.ac.at }
}

\address{J. M. Maz\'{o}n: Departamento de An\`{a}lisis Matem\`atico,
Universitat de Val\`encia, Dr. Moliner 50, 46100 Burjassot, Spain.
\hfill\break\indent
{\tt mazon@uv.es }}
	
\keywords{Metric measure spaces, Nonsmooth analysis, Gauss-Green formula, Least gradient functions, Functions of bounded variation.\\
\indent 2020 {\it Mathematics Subject Classification:} 49J52, 58J32, 35J75, 26A45.}
	
\setcounter{tocdepth}{1}

\date{\today}
	
\begin{abstract} 
In the framework of the first-order differential structure introduced by Gigli, we obtain a Gauss-Green formula on regular bounded open sets of metric measure spaces, valid for BV functions and vector fields with integrable divergence. Then, we study least gradient functions in metric measure spaces using this formula as the main tool.
\end{abstract}
	
\maketitle

{\renewcommand\contentsname{Contents}
\setcounter{tocdepth}{3}
\tableofcontents}

\section{Introduction}

This paper has two principal goals. The first one is to obtain a Gauss-Green formula of Anzellotti type in bounded open sets of metric measure spaces, and the second one is to apply said formula to study the least gradient problem. The Gauss-Green formula is one of the most important tools in pure and applied mathematics. Motivated by its applications, throughout history there have been different generalisations,  which were of two main types: one may consider weakly differentiable vector fields but fairly regular (e.g. Lipschitz) domains, or, following De Giorgi and Federer, consider fairly regular vector fields and sets of finite perimeter (see e.g. \cite[Theorem 3.36]{AFP}).

The Gauss-Green formulas on regular domains were typically formulated for Lipschitz or Sobolev functions. However, for many further applications, it is not sufficient; for example, the characteristic function of a set $E$ of finite perimeter is not a Sobolev function, but in problems related to image processing, naturally you have to deal with characteristic functions of sets. Thus, a version of the Gauss-Green formula valid for functions of bounded variation (i.e. functions in $L^1$ whose weak derivatives are Radon measures) was needed. Motivated by problems in elasticity, Anzellotti in \cite{Anz} introduced several parings between bounded vector fields and the gradients of functions of bounded variation, and showed existence of traces of the normal component of bounded vector fields whose divergence is a Radon measure on the boundary of open bounded sets $\Omega \subset \R^N$ with Lipschitz boundary  (simultaneously, and for similar reasons, these objects have been studied by Kohn and Temam in \cite{KT}). In the literature, these objects are called in short divergence-measure vector fields and normal traces. In this context, Anzellotti proved a Gauss-Green formula,  which relates a divergence-measure vector fields, its normal trace, a BV function and its trace. In this form, the Gauss-Green formula has many applications. For instance, it is a fundamental tool for the study of the $1$-Laplacian operator $\Delta_1:= {\rm div}\left( \frac{Du}{\vert Du \vert}\right)$,  where the greatest difficulty is to define the quotient $\frac{Du}{\vert Du \vert}$, where $Du$ is a Radon measure which may vanish on a set of positive Lebesgue measure (this is in fact a typical property of solutions to some of the problems involving the $1$-Laplacian operator). This difficulty was overcome in \cite{ACMBook,ABCMNeu} using Anzellotti’s pairings; the role of this quotient is played by a vector field $\z$ such that $\Vert \z \Vert_\infty \leq 1$ and $(\z, Du) = \vert Du \vert$, where $(\z, Du)$ is the Anzellotti pairing.

After the work of Anzellotti in \cite{Anz}, the notion of divergence-measure fields was rediscovered (for different purposes) in the early $2000$s. In particular, Chen and Frid proved generalized Gauss-Green formulas for divergence-measure fields on open bounded set with Lipschitz deformable boundary (see \cite{CF1}), motivated by applications to the theory of systems of conservation laws.  The approach by Chen and Frid has led to a large number of extensions, see for example \cite{CCT,CT,CTZ}. Recently Anzellotti's pairing theory has been studied in more detail in \cite{CDeC1} and \cite{CDeC2}, with the main focus of describing the interaction between the discontinuity sets of a BV function and a divergence-measure vector field.

The Gauss-Green formula has been studied also in a number of different non-Euclidean contexts (see for example \cite{HN}, \cite{LY}).  Recently, various authors introduced a number of generalisations in the framework of metric measure spaces. In \cite{CM}, the authors established a Gauss-Green theorem for a set of finite perimeter and divergence-measure vector fields in Carnot groups. In the case more relevant to our considerations, when the metric space is doubling and satisfies a Poincar\'{e} inequality, a Gauss-Green formula on regular balls has been obtained in \cite{MMS}. Then, Buffa, Comi and Miranda Jr. in \cite{BCM} introduced the notion of regular domains, for which they proved existence of boundary traces of $L^\infty$ divergence-measure vector fields and a Gauss-Green formula valid for such vector fields and Lipschitz functions. In order to prove the Gauss-Green formula in this very general setting, the authors of \cite{BCM} extensively rely on the first-order differential structure introduced by Gigli (see \cite{Gig}); this construction produces a notion of cotangent and tangent modules, through which the vector fields with integrable divergence are defined.

Our first goal in this paper is to generalise this result and prove a general Gauss-Green formula on regular domains in metric measure spaces, which is valid for BV functions and vector fields with integrable divergence, similarly to the results proved by Anzellotti in \cite{Anz} in the Euclidean case. To this end, we first introduce the notion of Anzellotti pairings on metric measure spaces and study some of their properties. These are also expressed in terms of the first-order differential structure introduced by Gigli in \cite{Gig}.

Our second goal is to apply our Gauss-Green formula to study the least gradient problem in regular bounded open subsets of a metric measure space. In the last few years, this minimalisation problem
\begin{equation}\label{eq:originalproblem}
\min \bigg\{ \int_{\Omega} |Du|: \, u \in BV(\Omega), \, u|_{\partial\Omega} = f \bigg\}
\end{equation}
and its variants have attracted considerable attention, see for instance \cite{DS,GRS2017NA,JMN,MazRoSe,SWZ}. In the classical setting, $\Omega \subset \mathbb{R}^N$ is an open bounded set with Lipschitz boundary. This problem was first considered in this form in \cite{SWZ}, where the authors studied it from the point of view of geometric measure theory. Indeed, due to the results in \cite{BdGG}, the problem can be equivalently seen in the following way: to find a foliation of the domain by minimal surfaces (except for a set on which the solution is locally constant) in a way enforced by the boundary data. Under some geometric assumptions on the curvature of $\partial\Omega$, the authors established that for continuous boundary data there exists a unique solution, which is continuous up to the boundary, and studied its properties.

 When the boundary datum is not continuous, the situation is a bit different. Denote by $\mathbb{D}$ the unit disk in $\R^2$. In \cite{ST}, the authors give an example of a function  $f \in L^1(\partial \mathbb{D}) \backslash C(\partial\mathbb{D})$ such that the variational problem
$$\min \bigg\{ \int_{\mathbb{D}} |Du|: \, u \in BV(\mathbb{D}), \, u|_{\partial\mathbb{D}} = f \bigg\} $$
has no solution. Therefore, for $L^1$ data on the boundary, the correct formulation of the least gradient problem is not \eqref{eq:originalproblem}. To overcome this problem, a second point of view was introduced in \cite{MazRoSe}, where the authors studied the relaxed version of the problem
\begin{equation}\label{eq:relaxedproblem}
\min \bigg\{ \int_{\Omega} |Du| + \int_{\partial\Omega} |u - f| \, d\mathcal{H}^{N-1}: \, u \in BV(\Omega) \bigg\}
\end{equation}
and provide a characterisation of the solutions. This approach has the advantage that solutions to this problem exist for general boundary data $f \in L^1(\partial\Omega,\mathcal{H}^{N-1})$ and without any regularity assumptions on the domain. However, the boundary condition in \eqref{eq:relaxedproblem} is weaker than the boundary condition in \eqref{eq:originalproblem}, where it typically is understood for discontinuous boundary data as a trace of a BV function.

In \cite{JMN}, the authors study the following general least gradient problem
\begin{equation}\label{eq:anisoproblem}
\min \bigg\{ \int_{\Omega} \phi(x,Du): \, u \in BV(\Omega), \, u|_{\partial\Omega} = f \bigg\},
\end{equation}
where $\phi$ is a metric integrand and $f \in C(\partial \Omega)$. Under some restrictions on the metric integrand $\phi$ and assuming that $\Omega$ satisfies a  certain curvature condition, in \cite{JMN} it is proved that the problem \eqref{eq:anisoproblem} has a unique minimiser for every $f \in C(\partial \Omega)$. For the special case $\phi(x,\xi) := a(x)|\xi|$, the problem \eqref{eq:anisoproblem} is the weighted least gradient problem
\begin{equation}\label{eq:weitedblemrel}
\min \bigg\{ \int_{\Omega} a(x) \vert Du\vert  \, : \, u \in BV(\Omega), \ \  u|_{\partial\Omega} = f \bigg\},
\end{equation}
which appears in \cite{NTT} in connection with the conductivity imaging problem (see \cite{NTT1} for a good survey about this problem). Let us describe it shortly.

In \cite{HMN} the authors present a method for recovering the conformal factor of an anisotropic conductivity matrix in a known conformal class from one interior measurement. They assume that the matrix-valued conductivity function is of the form
$$\sigma = c(x) \sigma_0(x),$$
with $\sigma_0(x)$ known  and with the cross-property factor
$c(x)$ a scalar function to be determined. In \cite{HMN} the authors showed that the corresponding voltage potential $u$ is the unique solution of a general least gradient problem \eqref{eq:anisoproblem}, where $\phi$ is given by
$$\phi(x,\xi) = a(x) \left( \sum_{i,j=1}^N \sigma_0^{i,j}(x) \xi_i\xi_j \right)^{\frac12}, \quad a = \sqrt{\sigma_0^{-1} J \cdot J},$$
where $J = - \sigma \nabla u$ is the current density vector field generated by imposing the voltage $f$ at $\partial \Omega$. In particular, if $\sigma_0$ is the identity matrix, we recover the weighted least gradient problem \eqref{eq:weitedblemrel}. The corresponding Euler-Lagrange equation is
$${\rm div} \left( \sqrt{\vert g \vert} \frac{g^{i,j} \nabla_i u}{\vert \vert g^{-1} \nabla u \vert \vert_g} \right) = 0, \quad \hbox{in}  \ \Omega,$$
where $g_{i,j}$ is the Riemannian metric on $\Omega$:
$$g_{i,j}:= \left( \vert \sigma_0 \vert \vert J \vert_{\sigma_0^{-1}} \right)^{\frac{1}{N-1}} (\sigma_0^{-1})_{i,j}.$$
As a consequence of this, they showed the geometrical result that
equipotential sets $u^{-1}(\lambda):=  \{x \in \overline{\Omega} \ : \ u(x) = \lambda \}$ are in fact minimal surfaces with respect to the Riemannian metric $g_{i,j}$ on $\Omega$.  Finally, let us note that a relaxed version of problem \eqref{eq:anisoproblem} in the spirit of \eqref{eq:relaxedproblem} appears independently in connection with a problem in image processing (see \cite{CFM}) and is studied in detail in \cite{Mazon}.

In light of this, it is natural to consider the least gradient problem in a metric measure space different to the Euclidean one. In this paper, we aim to generalise the results of \cite{Mazon} and \cite{MazRoSe} to the setting of metric measure spaces and provide an Euler-Lagrange characterisation of solutions. This will be achieved using a linear first-order differential structure introduced recently by Gigli, see \cite{Gig}. Some variants of the least gradient problem in metric measure spaces have already been studied in \cite{HKLS,LMSS,KLLS}; similarly to the Euclidean case, there are several possible definitions, which we will now briefly review.

The simplest definition (see \cite{BdGG,Mir} in the Euclidean case) does not directly take into account the boundary condition. Given an open set $\Omega \subset \mathbb{X}$, we say that $u \in BV(\Omega, d, \nu)$ is a function of {\it least gradient} in $\Omega$, if for all $v \in BV(\Omega, d, \nu)$ with compact support
\begin{equation}\label{eq:leastgradient}
\vert Du \vert_\nu(\Omega)  \leq \vert D(u + v) \vert_\nu (\Omega).
\end{equation}
For the definitions of BV spaces in metric measure spaces, see Section \ref{sec:preliminaries}.

For Lipschitz boundary data, two possible notions of solutions have been introduced in \cite{KLLS}. Given an open bounded set $\Omega \subset \mathbb{X}$ and a boundedly supported function $f \in \hbox{Lip}(\mathbb{X})$, we say that a function $u$ is a {\it solution to the Dirichlet problem of least gradient} with boundary data $f$ in the sense of (B) (respectively in the sense of (T)), if it is a solution to the following minimisation problem:

\noindent (B) \ Minimise $\vert Dv \vert_\nu (\overline{\Omega})$ over all functions $v \in BV(\mathbb{X}, d, \nu)$ with $v = f$ on $\mathbb{X} \setminus \overline{\Omega}$.

\noindent (T) \ Minimise $\vert Dv \vert_\nu (\Omega) +\int_{\partial \Omega} \vert T_+ v - f \vert (x) dP_+(\Omega, x)$ over all functions $v \in BV(\mathbb{X}, d, \nu)$.

 Here, $P_+(\Omega,\cdot)$ is the inner perimeter measure, see Definition \ref{dfn:innerperimeter}. Both of these definitions are in the spirit of the relaxed formulation \eqref{eq:relaxedproblem}. The advantage of the first approach is that it is much easier to state the problem and prove existence of solutions using the direct method. On the other hand, in the second approach the solution depends only on the shape of $\Omega$ and the structure of $\mathbb{X}$ outside $\Omega$ is not taken into account. Existence of solutions in both cases was proved in \cite{KLLS} using approximations by $p$-harmonic functions; we also refer to that paper for a discussion regarding the relationship between the two formulations.

A stronger notion of solutions was introduced in \cite{LMSS}. In line with the classical result by Sternberg, Williams and Ziemer (see \cite{SWZ}), they assume more about the geometry of $\Omega$, but require that the trace of solution is equal to the boundary datum. Namely, assuming a condition which generalises positive mean curvature of $\partial\Omega$, they prove existence of a solution $u \in BV(\mathbb{X},d,\nu)$ in the sense of (B) for continuous boundary data $f \in BV(\mathbb{X},d,\nu) \cap C(\mathbb{X})$. Furthermore,
$$\lim_{\Omega \ni y \to x} u(y) = f(x)$$
whenever $x \in \partial \Omega$. In particular, $T_\Omega u = f|_{\partial\Omega}.$

In this paper, we will adopt the definition (T) of solutions to the least gradient problem and extend it to boundary data $f \in L^1(\partial\Omega,\mathcal{H})$.  Here, $\mathcal{H}$ is the codimension one Hausdorff measure defined via equation \eqref{eq:hausdorffcodimone}, and the exact definition of the functional we minimise will be given in equation \eqref{fuctme}. In \cite{KLLS}, as the authors note in the introduction, the proof of existence of solutions requires Lipschitz regularity of boundary data; this limitation follows from the lack of an appropriate Gauss-Green formula in the setting of metric measure spaces.  Since we provide a general enough Gauss-Green formula, we will be able to work directly with $L^1$ functions on the boundary.

The structure of the paper is as follows. In Section \ref{sec:preliminaries}, we recall the notions of Sobolev and BV functions in metric measure spaces and some of their properties. We also recall briefly the construction of the first-order differential structure on a metric measure space. Then, in Section \ref{sec:Anzellotti}, we prove the general Gauss-Green formula in the metric setting; to this end, we first introduce the notion of Anzellotti pairings on a metric space and prove a number of approximation results. Finally, in Section \ref{sec:leastgradient}, we provide a characterisation of solutions to the least gradient problem in metric measure spaces, using the fact that existence of the first-order differential structure enables us to use the classical methods of duality theory.

\section{Preliminaries}\label{sec:preliminaries}

\subsection{Sobolev and BV functions in metric measure spaces}
	
Let $(\mathbb{X}, d, \nu)$ be a metric measure space. For any $p \in [1,\infty)$, in the literature there are several possible definitions of Sobolev spaces on $\mathbb{X}$, most prominently via $p$-upper gradients, $p$-relaxed slopes, and via test plans. On complete and separable metric spaces equipped with a doubling measure  (or even under slightly weaker assumptions), all these definitions agree (see \cite{AGS1,DiMarinoTh}); since in this paper we will work under these assumptions, we will choose the most suitable definition for our purposes: the Newtonian spaces. We follow the presentation in \cite{BB}.

\begin{definition}{\rm We say that a measure $\nu$ on a metric space $\mathbb{X}$ is {\it doubling}, if there exists a constant $C_d \geq 1$ such that following condition holds:
\begin{equation}
0 < \nu(B(x,2r)) \leq C_d \, \nu(B(x, r)) < \infty
\end{equation}
for all $x \in \mathbb{X}$ and $r > 0$. The constant $C_d$ is called the doubling constant of $\mathbb{X}$. }
\end{definition}

\begin{definition}{\rm  We say that $\mathbb{X}$ supports a {\it weak $(1,p)$-Poincar\'{e} inequality} if there exist constants $C_P > 0$ and $\lambda \geq 1$ such that for all balls $B \subset \mathbb{X}$, all measurable functions $f$ on $\mathbb{X}$ and all upper gradients $g$ of $f$,
$$\dashint_B \vert f - f_B \vert d \nu \leq C_P r \left( \dashint_{\lambda B} g^p d\nu \right)^{\frac1p},  $$
where $r$ is the radius of $B$ and
$$f_B:= \dashint_B f d\nu := \frac{1}{\nu(B)} \int_B f d\nu.$$

 As a consequence of H\"older's inequality, whenever $\mathbb{X}$ supports a  weak $(1,p)$-Poincar\'{e} inequality, then it supports a weak $(1,q)$-Poincar\'{e} inequality for every $q \geq p$.
}
\end{definition}

We say that a Borel function $g$ is an {\it upper gradient} of a Borel function $u: \mathbb{X} \rightarrow \R$ if for all curves $\gamma: [0,l_\gamma] \rightarrow \mathbb{X}$ we have
$$ \left\vert u(\gamma(l_\gamma)) - u(\gamma(0)) \right\vert \leq \int_\gamma g \,  ds := \int_0^{l_\gamma} g(\gamma(t))  \, \vert \dot{\gamma}(t) \vert \, dt \,  ds,$$
where $$\vert \dot{\gamma}(t) \vert:= \lim_{\tau \to 0} \frac{\gamma(t + \tau) - \gamma(t)}{\tau}$$
is the {\it metric speed} of $\gamma$.

If this inequality holds for $p$-almost every curve, i.e. the $p$-modulus (see for instance \cite[Definition 1.33]{BB}) of the family of all curves for which it fails equals zero, then we say that $g$ is a {\it $p$-weak upper gradient} of $u$.

The Sobolev-Dirichlet class $D^{1,p}(\mathbb{X})$ consists of all Borel functions $u: \mathbb{X} \rightarrow \R$ for which there exists  an upper gradient (equivalently: a $p$-weak upper gradient) which lies in $L^p(\mathbb{X},\nu)$. The Sobolev space $W^{1,p}(\mathbb{X}, d, \nu)$ is defined as
$$W^{1,p}(\mathbb{X}, d, \nu):= D^{1,p}(\mathbb{X}) \cap L^p(\mathbb{X}, \nu).$$
In the literature, this space is sometimes called the Newton-Sobolev space (or Newtonian space) and is denoted $N^{1,p}(\mathbb{X})$. The space $W^{1,p}(\mathbb{X},d,\nu)$ is endowed with the norm
\begin{equation*}
\| u \|_{W^{1,p}(\mathbb{X},d,\nu)} = \bigg( \int_{\mathbb{X}} |u|^p \, d\nu + \inf_g \int_{\mathbb{X}} g^p \, d\nu \bigg)^{1/p},
\end{equation*}
where the infimum is taken over all upper gradients of $u$. Equivalently, we may take the minimum over the set of all $p$-weak upper gradients, see \cite[Lemma 1.46]{BB}. Under the assumptions that $\nu$ is doubling and a weak $(1,p)$-Poincar\'e inequality is satisfied, Lipschitz functions are dense in $W^{1,p}(\mathbb{X},d,\nu)$ (see \cite[Theorem 5.1]{BB}). Let us also stress that the same definition may be applied to open subsets $\Omega \subset \mathbb{X}$.

For every $u \in W^{1,p}(\mathbb{X},d,\nu)$ (even $u \in D^{1,p}(\mathbb{X})$), there exists a minimal $p$-weak upper gradient $|Du| \in L^p(\mathbb{X},\nu)$, i.e. we have $|Du| \leq g$ $\nu$-a.e. for all $p$-weak upper gradients $g \in L^p(\mathbb{X},\nu)$ (see \cite[Theorem 2.5]{BB}). It is unique up to a set of measure zero. In particular, we may simply plug in $|Du|$ in the infimum in the definition of the norm in $W^{1,p}(\mathbb{X},d,\nu)$. Moreover, in \cite{AGS1} (see also \cite{DiMarinoTh}) it was proved that on complete and separable metric spaces equipped with  a nonnegative Borel measure finite on bounded sets not only the various definitions of Sobolev spaces are equivalent, but also that various definitions of $|Du|$ are equivalent, including the Cheeger gradient or the minimal $p$-relaxed slope of $u$.

Recall that for a function $u : \mathbb{X} \rightarrow \R$, its {\it slope} (also called local Lipschitz constant) is defined by
$$\vert \nabla u \vert(x) := \limsup_{y \to x} \frac{\vert u(y) - u(x)\vert}{d(x,y)},$$
with the convention that  $\vert \nabla u \vert(x) = 0$ if $x$ is an isolated point.
	
\begin{remark}\label{curvature} {\rm  For locally Lipschitz functions, it is clear that $\vert Du \vert \leq \vert \nabla u \vert$. In general the equality is not true, but there are two important cases in which we have $\vert Du \vert = \vert \nabla u \vert$ $\nu$-a.e. These are:

(1) When $(\mathbb{X},d, \nu)$ is a metric measure spaces with Riemannian Ricci curvature bounded from below (see \cite	{AGS2});

(2) When $\nu$ is  doubling and $(\mathbb{X},d, \nu)$ supports a weak $(1,p)$-Poincar\'e inequality for some $p > 1$ (see \cite{Chegeer}); in this paper, we will work under these assumptions. $\blacksquare$
			
}
\end{remark}

As in the case of Sobolev functions, in the literature there are several different ways to characterise the total variation in metric measure spaces. However, on complete and separable metric spaces equipped with a doubling measure (or even under a bit weaker assumptions), these notions turn out to be equivalent, see \cite{ADiM} and \cite{DiMarinoTh}. In this paper, we will employ the definition of total variation introduced by Miranda in \cite{Miranda1}.  For $u \in L^1(\mathbb{X},\nu)$, we define the total variation of $u$ on an open set $\Omega \subset \mathbb{X}$ by the formula
\begin{equation}\label{dfn:totalvariationonmetricspaces}
\vert D u \vert_{\nu}(\Omega):= \inf \left\{ \liminf_{n \to \infty} \int_\Omega g_{u_n} \, d\nu \ : \ u_n \in Lip_{loc}( \Omega), \ u_n \to u \ \hbox{in} \ L^1(\Omega, \nu) \right\},
\end{equation}
 where $g_{u_n}$ is a $1$-weak upper gradient of $u$ (we may take $g_{u_n} = |\nabla u_n|$, see \cite{ADiM}). Under the assumptions that $\nu$ is doubling on $\Omega$ and $(\Omega,d,\nu)$ satisfies a weak $(1,1)$-Poincar\'e inequality, since by \cite[Theorem 5.1]{BB} Lipschitz functions are dense in $W^{1,1}(\Omega,d,\nu)$, in the definition above we may require that $u_n$ are Lipschitz functions instead of locally Lipschitz functions. Moreover, the total variation $|Du|_\nu(\mathbb{X})$ defined by formula \eqref{dfn:totalvariationonmetricspaces} is lower semicontinuous with respect to convergence in $L^1(\mathbb{X},\nu)$.

The space of functions of bounded variation $BV(\mathbb{X},d,\nu)$ consists of all functions $u \in L^1(\mathbb{X},\nu)$ such that $|Du|_\nu(\mathbb{X}) < \infty$. It is a Banach space with respect to the norm
$$\Vert u \Vert_{BV(\mathbb{X},d,\nu)}:= \Vert u \Vert_{L^1(\mathbb{X},\nu)} + \vert D u \vert_{\nu}(\mathbb{X}).$$
Convergence in norm is often too much to ask when we deal with $BV$ functions, therefore we will employ the notion of strict convergence. We say that a sequence $\{ u_i \} \subset BV(\mathbb{X},d,\nu)$ {\it strictly converges} to $u \in BV(\mathbb{X},d,\nu)$, if $u_i \to  u$ in $L^1(\mathbb{X}, \nu)$ and $| Du_i |_\nu(\mathbb{X}) \to | Du |_\nu(\mathbb{X})$.  In the course of the paper, we will also employ these definitions for subsets of $\mathbb{X}$ (typically an open set $\Omega$ or its closure), which can be viewed as metric measure spaces equipped with a restriction of the metric $d$ and the measure $\nu$.

In the setting of metric measure spaces, there are a few possible ways to define the boundary measure of an open set. The most standard one is via the definition of $BV$ spaces. A set $E \subset \mathbb{X}$ is said to be of finite perimeter if $\1_E \in BV(\mathbb{X},d, \nu)$, and its perimeter is defined as
$${\rm Per}_{\nu}(E):= \vert D \1_E \vert_{\nu}(\mathbb{X}).$$
If $U \subset \mathbb{X}$ is an open set, we define the perimeter of $E$ in $U$ as
$${\rm Per}_{\nu}(E, U):= \vert D \1_E \vert_{\nu}(U).$$

An alternate definition was given in \cite{KLLS}. Its virtue is that the structure of $
\mathbb{X}$ outside $\overline{\Omega}$ does not enter the definition, because we only allow the approximating sequence of Lipschitz functions to be nonzero in $\Omega$.

\begin{definition}\label{dfn:innerperimeter} {\rm Given open sets $\Omega, U$ in $\mathbb{X}$, we define  the {\it inner perimeter} of $\Omega$ in $U$ as
$$P_+(\Omega,U):= \inf \left\{ \liminf_{n \to \infty} \int_U g_{\psi_n} d\nu \right\},$$
where each $g_{\psi_n}$ is the minimal $1$-weak upper gradient of $\psi_n$, and where the infimum is taken over all sequences $(\psi_n) \subset {\rm Lip}_{loc}(U)$ such that $\psi_n \to \1_\Omega$ in $L^1(U, \nu)$ and $\psi_n = 0$ in $U \setminus \Omega$ for each $n \in \N$.

Furthermore, for any $A \subset \mathbb{X}$ we let
$$P_+(\Omega,A):= \inf \left\{ P_+(\Omega,U) \ : \ U \ \hbox{open}, \, A \subset U \right\}.$$
If $P_+(\Omega,\mathbb{X}) < \infty$, then $P_+(\Omega,\cdot)$ is a Radon measure on $\mathbb{X}$, which we call the {\it inner perimeter measure} of $\Omega$.  Under an additional assumption on $\Omega$ called the exterior measure density condition, the perimeter $Per_\nu$ and the inner perimeter $P_+$ are equivalent, see \cite[Theorem 6.9]{KLLS}; later in the paper, we will prove that for regular domains (see Definition \ref{def:regulardomain}) these notions coincide.}
\end{definition}

The third common way to define the boundary measure in metric measure spaces in the {\it codimension one Hausdorff measure}. Given a set $A \subset \mathbb{X}$, it is defined as
\begin{equation}\label{eq:hausdorffcodimone}
\mathcal{H}(A):= \lim_{R \to 0} \inf \left\{ \sum_{i=1}^\infty \frac{\nu(B(x_i,r_i))}{r_i} \ : \ A \subset \bigcup_{i=1}^\infty B(x_i,r_i), \ 0 < r_i \leq R \right\}.
\end{equation}
It is known from \cite[Theorem 5.3]{Ambrosio} that if $E \subset \mathbb{X}$ is of finite perimeter, then for any Borel set $A \subset \mathbb{X}$,
\begin{equation}\label{Ammb}
\frac{1}{C} \mathcal{H} (A \cap \partial_* E) \leq {\rm Per}_{\nu}(E, A) \leq C \mathcal{H} (A \cap \partial_* E),
\end{equation}
where $\partial_* E$ is the measure theoretical boundary of $E$, that is, the collection of all points $x \in \mathbb{X}$ for which simultaneously
$$\limsup_{r \to 0^+} \frac{\nu(B(x,r) \cap E)}{\nu(B(x,r))} >0, \quad  \limsup_{r \to 0^+} \frac{\nu(B(x,r) \setminus E)}{\nu(B(x,r))} >0.$$
In particular, if $\partial_*\Omega = \partial\Omega$, the spaces $L^p(\partial\Omega,\mathcal{H})$, $L^p(\partial\Omega, |D\1_{\Omega}|)$ coincide as sets for every $p \in [1,\infty]$, and are equipped with equivalent norms. We will write explicitly which norm we use every time where it is not clear from the context.

Definition of boundary values of BV functions in a metric measure space is a more delicate issue. We will restrict our attention to open sets and adopt the following definition, used for instance in \cite{LS} and \cite{KLLS}:
	
\begin{definition}\label{dfn:trace}
{\rm Let $\Omega \subset \mathbb{X}$ be an open set and let $u$ be a $\nu$-measurable function on $\Omega$. A number $T_\Omega u(x)$ is a {\it trace} of $u$ at $x \in \partial \Omega$ if
$$\lim_{r \to 0^+} \dashint_{\Omega \cap B(x,r)} \vert u - T_\Omega u(x) \vert \, d \nu = 0.$$
We say that $u$ has a trace in $\partial \Omega$ if $ T_\Omega u(x)$ exists for $\mathcal{H}$-almost every $x \in \partial \Omega$.}
\end{definition}

Well-posedness of the trace and identifying the trace space of $W^{1,1}(\Omega,d,\nu)$ or $BV(\Omega,d,\nu)$ in the setting of metric measure spaces is not immediate and requires additional structural assumptions on $\Omega$. We summarise the results known in the literature in the following Theorem, which is a combination of \cite[Theorem 1.2]{MSS} and \cite[Theorem 5.5]{LS}.

\begin{theorem}\label{thm:traces}
Suppose that $\nu$ is doubling and $\mathbb{X}$ supports a weak $(1,1)$-Poincar\'e inequality. Let $\Omega$ be an open bounded set which supports a weak $(1,1)$-Poincar\'e inequality. Assume that $\Omega$ additionally satisfies the {\it measure density condition}, i.e.
there is a constant $C > 0$ such that
\begin{equation}\label{MDC1}
\nu(B(x,r) \cap \Omega) \geq C \nu(B(x,r))
\end{equation}
for $\mathcal{H}$-a.e. $x \in \partial \Omega$ and every $r \in (0, {\rm diam}(\Omega))$. Moreover, assume that {\it $\partial\Omega$ is Ahlfors codimension 1 regular}, i.e. there is a constant $C > 0$ such that
\begin{equation}\label{eq:codimensiononeregularity}
C^{-1} \frac{\nu(B(x,r))}{r} \leq \mathcal{H}(B(x,r) \cap \partial\Omega) \leq C \frac{\nu(B(x,r))}{r}
\end{equation}
for all $x \in \partial\Omega$ and every $r \in (0,\mbox{diam}(\Omega))$. \\
\\
Under these assumptions, Definition \ref{dfn:trace} defines an operator $T_\Omega: BV(\Omega,d,\nu) \twoheadrightarrow L^1(\partial\Omega,\mathcal{H})$. Moreover, the operator $T_\Omega$ is linear, bounded and surjective.
\end{theorem}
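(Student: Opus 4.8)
The statement gathers four assertions --- $\mathcal H$-a.e.\ existence of the trace, linearity, the bound $\|T_\Omega u\|_{L^1(\partial\Omega,\mathcal H)}\le C\|u\|_{BV(\Omega,d,\nu)}$, and surjectivity --- and my plan is to treat them in this order, since the a.e.\ existence carries essentially all of the analytic difficulty and the remaining parts follow from the quantitative estimates produced along the way. For the existence, fix $x\in\partial\Omega$, choose $R\in(0,\mathrm{diam}(\Omega))$, set $r_i=2^{-i}R$ and $B_i=B(x,r_i)$, and consider the averages $a_i:=\dashint_{B_i\cap\Omega}u\,d\nu$. The plan is to show that $(a_i)$ is Cauchy for $\mathcal H$-a.e.\ $x$ and to define $T_\Omega u(x)$ as its limit. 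Since $B_{i+1}\subset B_i$, the measure density condition \eqref{MDC1} together with doubling makes $\nu(B_i\cap\Omega)$ comparable to both $\nu(B_{i+1}\cap\Omega)$ and $\nu(B_i)$, so that
$$|a_{i+1}-a_i|\le C\dashint_{B_i\cap\Omega}|u-a_i|\,d\nu.$$
Applying the weak $(1,1)$-Poincar\'e inequality on $\Omega$ (in its BV form) bounds the right-hand side by $C\,r_i\,|Du|_\nu(\lambda B_i\cap\Omega)/\nu(B_i)$, and summing telescopes $(a_i)$ to a limit whenever the resulting series converges.

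The main obstacle, as I see it, is precisely to show that this series converges for $\mathcal H$-a.e.\ $x\in\partial\Omega$. Here I would run a covering argument: the set of boundary points at which $\sum_i r_i\,|Du|_\nu(\lambda B_i)/\nu(B_i)$ diverges is contained in the set where an associated codimension-one maximal function of the finite measure $|Du|_\nu$ is infinite, and a Vitali-type covering estimate combined with the regularity \eqref{eq:codimensiononeregularity} --- which converts $\nu(B)/r$ into $\mathcal H(B\cap\partial\Omega)$ --- forces this exceptional set to be $\mathcal H$-null. This is the step that genuinely requires all three structural hypotheses on $\Omega$ at once, and it is the content carried out in \cite[Theorem 1.2]{MSS}.

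Linearity is then immediate from the defining limit in Definition \ref{dfn:trace}: where $T_\Omega u(x)$ and $T_\Omega v(x)$ exist, the triangle inequality gives $T_\Omega(u+v)(x)=T_\Omega u(x)+T_\Omega v(x)$ and homogeneity is clear. For boundedness I would prove the trace inequality directly by covering $\partial\Omega$ with a Vitali family of balls $B(x_j,r_j)$, estimating $|T_\Omega u(x_j)|$ by $\dashint_{B(x_j,r_j)\cap\Omega}|u|\,d\nu$ plus the telescoping oscillation term already controlled above, and summing; the conversion \eqref{eq:codimensiononeregularity} turns the sum into a multiple of $\|u\|_{L^1(\Omega,\nu)}+|Du|_\nu(\Omega)=\|u\|_{BV(\Omega,d,\nu)}$.

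Finally, for surjectivity I would construct a bounded linear extension operator $E\colon L^1(\partial\Omega,\mathcal H)\to BV(\Omega,d,\nu)$ satisfying $T_\Omega\circ E=\mathrm{id}$. Given $g$, take a Whitney decomposition of $\Omega$, define $Eg$ on each Whitney ball as a weighted average of $g$ over the nearest portion of $\partial\Omega$, and glue these pieces by a Lipschitz partition of unity subordinate to the cover. The measure density condition and \eqref{eq:codimensiononeregularity} guarantee that $Eg$ lies in $BV(\Omega,d,\nu)$ with norm controlled by $\|g\|_{L^1(\partial\Omega,\mathcal H)}$, while a Lebesgue-point argument for $g$ along the Whitney chains shows that the trace of $Eg$ equals $g$ at $\mathcal H$-a.e.\ point of $\partial\Omega$. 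This is exactly what is established in \cite[Theorem 5.5]{LS}, and combining it with the existence and boundedness above yields the full statement.
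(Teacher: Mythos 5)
The paper itself does not prove this statement: it is presented explicitly as a summary of results from the literature, namely a combination of \cite[Theorem 1.2]{MSS} and \cite[Theorem 5.5]{LS}, so your deferral of the two hard steps to exactly those references is consistent with what the paper does, and your outline of existence, linearity and boundedness follows the strategy of those papers. One warning about the existence sketch: $\mathcal{H}$-a.e.\ finiteness of a codimension-one maximal function of $|Du|_\nu$ controls $\sup_i r_i|Du|_\nu(\lambda B_i)/\nu(B_i)$, not the series $\sum_i r_i|Du|_\nu(\lambda B_i)/\nu(B_i)$; a series with bounded (even vanishing) terms may diverge, so the inclusion you assert of the divergence set in the set where the maximal function is infinite is false as stated, and the actual argument of \cite{MSS} is more delicate. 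Since you explicitly delegate this step to \cite{MSS}, I regard that as imprecision rather than the main defect.

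The genuine gap is in the surjectivity step. You propose a \emph{bounded linear} extension operator $E\colon L^1(\partial\Omega,\mathcal{H})\to BV(\Omega,d,\nu)$ with $T_\Omega\circ E=\mathrm{id}$, built by Whitney averaging. No such linear operator can exist: already for the Euclidean half-plane, composing a bounded linear right inverse of the trace into $BV$ with the standard linear smoothing operator (mollification at scale comparable to the distance to the boundary, which maps $BV$ boundedly into $W^{1,1}$ and preserves traces) would yield a bounded linear right inverse of the trace into $W^{1,1}$, contradicting Peetre's classical counterexample attached to Gagliardo's trace theorem. This is precisely why the paper, in the sentence immediately following the statement, stresses that the extension operator $\mathrm{Ext}$ is \emph{nonlinear}; whatever \cite[Theorem 5.5]{LS} establishes, it is not a linear extension. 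Concretely, the Whitney-average extension you describe satisfies only a bound of $|D(Eg)|_\nu(\Omega)$ by a Besov $B^0_{1,1}$-type quantity (the sum over dyadic scales of the integrated oscillation of $g$ at that scale on $\partial\Omega$), which is not controlled by $\|g\|_{L^1(\partial\Omega,\mathcal{H})}$; the extensions constructed in \cite{MSS} and \cite{LS} are instead of Gagliardo type, with the averaging scales chosen depending on $g$ itself, which is exactly where linearity is lost. The theorem's conclusion is unaffected, since surjectivity only requires \emph{some} right inverse, but your proof of this part would fail as written: you should either reproduce the nonlinear construction or cite it as a nonlinear one.
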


Furthermore, under the same assumptions there is a (nonlinear) bounded extension operator $\mbox{Ext}: L^1(\partial\Omega,\mathcal{H}) \rightarrow BV(\Omega,d,\nu)$ such that $T_\Omega \circ \mbox{Ext}$ is the identity operator on $L^1(\partial\Omega,\mathcal{H})$. As the discussion in \cite{MSS} shows, any of these conditions cannot be dropped if we want the trace operator to be surjective; however, in order for it to be a linear and bounded operator with values in $L^1(\partial\Omega,\mathcal{H})$ we may weaken the assumptions a little bit and only assume that the upper bound in \eqref{eq:codimensiononeregularity} holds.

The measure density condition \eqref{MDC1} has an important consequence. Namely, let $f = \1_{\mathbb{X} \backslash \Omega} \in L^\infty(\mathbb{X},\nu)$. By the Lebesgue differentiation theorem, $\nu$-a.e. point in $\mathbb{X} \backslash\Omega$ has density one. But this is impossible for $\mathcal{H}$-a.e. $x \in \Omega$; moreover, by definition of $\mathcal{H}$, the set of measure zero with respect to $\mathcal{H}$ is also a set of measure zero with respect to $\nu$. Hence, $\nu(\partial\Omega) = 0$. Moreover, when $\mathbb{X}$ is complete, the assumption that $\partial\Omega$ is Ahlfors codimension 1 regular also has an important consequence: $\partial\Omega$ is a closed bounded subset, so it is compact (see \cite[Proposition 3.1]{BB}). Then, we cover it with a finite number of balls and use the estimate from above in \eqref{eq:codimensiononeregularity} to conclude that $\mathcal{H}(\partial\Omega) < \infty$.

In the course of the paper, we will sometimes consider BV or Sobolev functions defined on the closure of $\Omega$, which can be viewed as a metric measure space in its own right. This is done for two principal reasons. One is that equivalence results between various definitions of Sobolev spaces require the metric space to be complete. The second one, perhaps more important, is that the construction of the first order differential structure introduced by Gigli (see \cite{Gig}) used extensively in the second part of the paper requires the metric space to be complete. The Remark below states that under the assumptions of Theorem \ref{thm:traces} these concerns are largely irrelevant, since we may identify the Sobolev and BV spaces on $\Omega$ and $\overline{\Omega}$.  In fact, throughout most of the paper we will work under the assumptions of Theorem \ref{thm:traces}.

\begin{remark}\label{rem:extensiontoclosure}{\rm
Suppose that $(\mathbb{X},d)$ is complete and separable and $\Omega \subset \mathbb{X}$ is an open bounded set. Then, $(\overline{\Omega},d)$ is also a complete and separable metric space. Assume that $\nu(\partial\Omega) = 0$. Then, if $\nu$ is doubling on $\Omega$ and $(\Omega,d,\nu)$ satisfies a weak $(1,p)$-Poincar\'e inequality, then $\nu$ is doubling on $\overline{\Omega}$ and $(\overline{\Omega},d,\nu)$ satisfies a weak $(1,p)$-Poincar\'e inequality, see \cite[Proposition 7.1]{AS}. Under these assumptions, the Sobolev and $BV$ spaces defined on $\Omega$ and $\overline{\Omega}$ coincide:

(1) For every function in $W^{1,p}(\overline{\Omega},d,\nu)$, its restriction to $\Omega$ lies in $W^{1,p}(\Omega,d,\nu)$. On the other hand, every $u \in W^{1,p}(\Omega,d,\nu)$ has an extension $\overline{u} \in W^{1,p}(\overline{\Omega},d,\nu)$ such that
\begin{equation*}
\| u \|_{W^{1,p}(\Omega,d,\nu)} = \| \overline{u} \|_{W^{1,p}(\overline{\Omega},d,\nu)},
\end{equation*}
see \cite[Lemma 8.2.3]{HKST}. Since by \cite[Theorem 5.1]{BB} Lipschitz functions are dense in both $W^{1,p}(\Omega,d,\nu)$ and $W^{1,p}(\overline{\Omega},d,\nu)$, we may conclude that $W^{1,p}(\Omega,d,\nu) = W^{1,p}(\overline{\Omega},d,\nu)$, see \cite[Proposition 7.1]{AS}.

(2) For every function in $BV(\overline{\Omega},d,\nu)$, its restriction to $\Omega$ lies in $BV(\Omega,d,\nu)$. On the other hand, every $u \in BV(\Omega,d,\nu)$ has an extension $\overline{u} \in BV(\overline{\Omega},d,\nu)$ such that
\begin{equation}\label{eq:bvextensiontoclosure}
\| u \|_{BV(\Omega,d,\nu)} = \| \overline{u} \|_{BV(\overline{\Omega},d,\nu)},
\end{equation}
see \cite[Proposition 3.3]{LS}.

In fact, the spaces $BV(\Omega,d,\nu)$ and $BV(\overline{\Omega},d,\nu)$ are isometric. To see this, notice that the map $\overline{u} \mapsto \overline{u}|_{\Omega}$ from $BV(\overline{\Omega},d,\nu)$ to $BV(\Omega,d,\nu)$ is linear and by equation \eqref{eq:bvextensiontoclosure} it is surjective. It remains to be shown that the norm is preserved; then, this map is in fact bijective
and has an inverse which also is a linear isometry. Clearly $\| \overline{u} \|_{L^1(\overline{\Omega},\nu)} = \| \overline{u}|_\Omega \|_{L^1(\Omega,\nu)}$; we will now see that the total variations coincide. Let $u_n \in \mbox{Lip}(\overline{\Omega})$ be a sequence of Lipschitz functions converging strictly to $\overline{u}$ (we may assume it to be Lipschitz by \cite[Theorem 5.1]{BB}, see the comment under the definition of the total variation). Then
\begin{equation*}
|D\overline{u}|_\nu(\overline{\Omega}) = \lim_{n \rightarrow \infty} \int_{\overline{\Omega}} g_{u_n} \, d\nu = \lim_{n \rightarrow \infty} \int_{\Omega} g_{u_n} \, d\nu \geq |D(\overline{u}|_\Omega)|_\nu(\Omega),
\end{equation*}
since the sequence $(u_n|_\Omega)$ converges in $L^1(\Omega,\nu)$ to $\overline{u}|_\Omega$. Here, $|D\overline{u}|_\nu(\overline{\Omega})$ is understood as the total variation on the whole measure space $(\overline{\Omega},d,\nu)$ and not as the total variation on a subset of $\mathbb{X}$.

On the other hand, let $v_n \in \mbox{Lip}(\Omega)$ be a sequence of Lipschitz functions converging strictly to $\overline{u}|_\Omega$. Denote by $\overline{v_n} \in \mbox{Lip}(\overline{\Omega})$  the Lipschitz extension of $v_n$ to $\overline{\Omega}$ (given by the McShane construction). Then, $\overline{v_n}$ converges in $L^1(\overline{\Omega},\nu)$ to $\overline{u}$, so
\begin{equation*}
|D(\overline{u}|_\Omega)|_\nu(\Omega) = \lim_{n \rightarrow \infty} \int_{\Omega} g_{v_n} \, d\nu = \lim_{n \rightarrow \infty} \int_{\overline{\Omega}} g_{\overline{v_n}} \, d\nu \geq |D\overline{u}|_\nu(\overline{\Omega}),
\end{equation*}
where in the second equality we used \cite[Lemma 6.3.8]{HKST} to conclude that we may require $g_{v_n} = g_{\overline{v_n}}$ on $\Omega$. Hence, the spaces $BV(\Omega,d,\nu)$ and $BV(\overline{\Omega},d,\nu)$ are isometric. $\blacksquare$}
\end{remark}

In the statement of Remark \ref{rem:extensiontoclosure}, instead of considering $\Omega \subset \mathbb{X}$ with $\nu(\partial\Omega) = 0$ and its closure $\overline{\Omega}$ we may consider a noncomplete metric space $Z$ and its metric completion $\hat{Z}$. Then, the Remark is valid if we equip $\hat{Z}$ with the null-extension of $\nu$; in fact, this is the language in which \cite[Proposition 7.1]{AS} and \cite[Lemma 3.3]{LS} were originally formulated.

\subsection{The differential structure}\label{subsec:diffstructure}

In the next Sections, we will first prove a version of the Gauss-Green formula on regular domains in metric measure spaces which is valid for $BV$ functions instead of Lipschitz functions, and use it to provide a characterisation of least gradient functions (or, in other words, the subdifferential of the $1$-Laplacian). Our main tool will be the linear differential structure on a metric measure space $(\mathbb{X},d,\nu)$ introduced by Gigli. We follow Gigli (see \cite{Gig}) and Buffa-Comi-Miranda (see \cite{BCM}) in the introduction of this first-order differential structure.

From now on, we assume that $\mathbb{X}$ is a complete and separable metric space and $\nu$ is a nonnegative Radon measure. We will introduce additional assumptions on $(\mathbb{X},d,\nu)$ in due course. In particular,  in order to
introduce the Anzellotti pairings on $(\mathbb{X},d,\nu)$ and use them to prove a Gauss-Green formula valid for $BV$ functions, we will require the doubling and Poincar\'e assumptions. Furthermore, to characterise the subdifferential of the $1$-Laplacian we will require that the assumptions of Theorem \ref{thm:traces} are satisfied, because we need to work with approximations which preserve traces.

\begin{definition}{\rm
We define the cotangent module to $\mathbb{X}$ as
$$ \mbox{PCM}_p = \left\{ \{(f_i, A_i)\}_{i \in \N} \ : \ (A_i)_{i \in\N} \subset \mathcal{B}(\mathbb{X}), \  f_i \in D^{1,p}(A_i), \ \ \sum_{i \in \N} \int_{A_i} |Df_i|^p \, d\nu < \infty  \right\},$$
where $A_i$ is a partition of $\mathbb{X}$. We define the equivalence relation $\sim$ as
$$\{(A_i, f_i)\}_{i \in \N} \sim \{(B_j,g_j)\}_{j \in \N} \quad \mbox{if} \quad |D(f_i - g_j)| = 0 \ \ \nu-\hbox{a.e. on} \ A_i \cap B_j.$$
Consider the map $\vert \cdot \vert_* : \mbox{PCM}_p/\sim \, \rightarrow L^p(\mathbb{X}, \nu)$ given by
$$\vert \{(f_i, A_i)\}_{i \in \N} \vert_* := \vert D f_i \vert$$
$\nu$-everywhere on $A_i$ for all $i \in \N$, namely the {\it pointwise norm} on $ \mbox{PCM}_p/\sim$.

In $ \mbox{PCM}_p/\sim$ we define the norm $\| \cdot \|$ as
$$ \|  \{(f_i, A_i)\}_{i \in \N} \|^p = \sum_{i\in \N} \int_{A_i}|Df_i|^p $$
and set $L^p(T^* \mathbb{X})$ to be the closure of  $\mbox{PCM}_p / \sim$ with respect to this norm, i.e. we identify functions which differ by a constant and we identify possible rearranging of the sets $A_i$. $L^p(T^* \mathbb{X})$ is called the {\it cotangent module} and its elements will be called {\it $p$-cotangent vector field}.

$L^p(T^* \mathbb{X})$ is a $L^p(\nu)$-normed module (see \cite{Gig} for the theory of $L^p(\nu)$-normed modules). We denote by $\vert \cdot \vert_*$  the pointwise norm on $L^p(T^* \mathbb{X})$, i.e., $\vert \cdot \vert_* : L^p(T^* \mathbb{X}) \rightarrow L^p(\mathbb{X}, \nu)$ such that
$$\Vert \vert v \vert_* \Vert_{L^p(\mathbb{X}, \nu)} = \Vert v \Vert_{L^p(T^* \mathbb{X})}, \quad \vert f v \vert_* = \vert f \vert \vert v \vert_*, \ \ \nu-a.e.,$$
for every $v \in L^p(T^* \mathbb{X})$ and $f \in L^\infty (\mathbb{X},\nu)$.  This second property is called the $L^\infty$-linearity of the cotangent module, see \cite{Gig} or \cite[Appendix A]{BCM}.

We denote by  $L^q(T\mathbb{X})$ the dual module of $L^p(T^* \mathbb{X})$, namely $$L^q(T\mathbb{X}):= \hbox{HOM}(L^p(T^* \mathbb{X}), L^1(\mathbb{X}, \nu)),$$
i.e., is a bounded linear operator from $L^p(T^* \mathbb{X})$ to $L^1(\mathbb{X}$ viewed as Banach spaces and further satisfies the locality condition
$$ X(f \omega) = f \cdot X(\omega), \quad \forall \omega \in L^p(T^* \mathbb{X}), \ \ f \in L^\infty(\mathbb{X}, \nu).$$
We have that $L^q(T\mathbb{X})$ is a $L^q(\nu)$-normed module (see \cite{Gig}),  it is $L^\infty$-linear and we will denote by $\vert \cdot \vert$ its pointwise norm. The elements of $L^q(T\mathbb{X})$ will be called  {\it $q$-vector fields} on $\mathbb{X}$. The duality between $\omega \in L^p(T^* \mathbb{X})$ and $X \in  L^q(T\mathbb{X})$ will be denoted by $\omega(X) \in L^1(\mathbb{X}, \nu)$.  Since $L^p(T^* \mathbb{X})$ is reflexive as a module (for all $p \in [1,\infty]$) we can identify
$$ L^q(T\mathbb{X})^* = L^p(T^*\mathbb{X}),$$
where $\frac{1}{p} + \frac{1}{q} = 1$.}
\end{definition}
	
\begin{definition}\label{dfn:differential}
{\rm
Given $f \in D^{1,p}(\mathbb{X})$ we can define its {\it differential} $df$ as an element of $L^p(T^* \mathbb{X})$ given by the formula $df = (f, \mathbb{X})$. Moreover,
$$\vert df \vert_* = \vert D f \vert, \quad \nu-a.e. \quad \forall f \in D^{1,p}(\mathbb{X}).$$}

\end{definition}

Clearly, the operation of taking the differential is linear as an operator from $D^{1,p}(\mathbb{X})$ to $L^p(T^{*} \mathbb{X})$; moreover, from the definition of the norm in $L^p(T^{*} \mathbb{X})$ it is clear that this operator is bounded with norm equal to one. Furthermore, for all $X \in L^q(T\mathbb{X})$ and $f \in D^{1,p}(\mathbb{X})$ we have
\begin{equation*}
df(X) \leq |df|_* |X| \leq \frac1p |df|_*^p + \frac1q |X|^q \qquad \nu-\mbox{a.e.}
\end{equation*}
Let us note that when the metric measure space is $(\mathbb{R}^N,d_{Eucl}, \mathcal{L}^N)$, the vector fields and differentials arising from this construction coincide with their standard counterparts defined in coordinates, see \cite[Remark 2.2.4]{Gig}.

Now, we define the divergence of a vector field, in the case when it can be represented by an $L^p$ function or a Radon measure. Following \cite{BCM,DiMarinoTh}, we set
$$ \mathcal{D}^q(\mathbb{X}) = \left\{ X \in L^q(T\mathbb{X}): \, \exists f \in L^q(\mathbb{X},\nu) \,\, \int_\mathbb{X} fg d\nu = - \int_{\mathbb{X}} dg(X) d\nu \ \ \forall g \in  W^{1,p}(\mathbb{X},d,\nu) \right\}. $$
Here, the right hand side makes sense as an action of an element of $L^p(T^* \mathbb{X})$ on an element of $L^{q}(T\mathbb{X})$; the resulting function is an element of $L^1(\mathbb{X},\nu)$. The function $f$, which is unique by the density of  $W^{1,p}(\mathbb{X},d,\nu)$ in $ L^{p}(\mathbb{X},\nu)$, will be called the {\it $q$-divergence} of the vector field $X$, and we shall write $\mbox{div}(X)= f$.  The dependence of the divergence on $q$ is discussed at length in \cite{BCM}.

 In the course of the paper, in order to prove the generalised Gauss-Green formula, we also need to consider the case when $X \in L^\infty(T\mathbb{X})$, but its divergence lies in $L^r(\mathbb{X},\nu)$. To this end, let us recall the following space introduced in \cite{GM2021}. For $\frac1r + \frac1s = 1$, we set
$$ \mathcal{D}^{q,r}(\mathbb{X}) = \bigg\{ X \in L^q(T\mathbb{X}): \,\, \exists f \in L^r(\mathbb{X},\nu) \quad \forall g \in  W^{1,p}(\mathbb{X},d,\nu) \cap L^s(\mathbb{X},\nu) \qquad $$
$$ \qquad\qquad\qquad\qquad\qquad\qquad\qquad\qquad\qquad\qquad\qquad \int_\mathbb{X} fg \, d\nu = - \int_{\mathbb{X}} dg(X) \, d\nu \bigg\}. $$
This uniquely defined function $f$ will be called the $(q,r)$-divergence of $X$. We will still write $\mbox{div}(X) = f$ when it is clear from the context. Whenever Lipschitz functions are dense in $W^{1,p}(\mathbb{X},d,\nu)$ (as in \cite[Theorem 5.1]{BB}), then the divergence does not depend on $r$ in the following sense: if $f$ is the $(q,r)$-divergence of $X$ and $f \in L^{r'}(\mathbb{X},\nu)$, then it is also the $(q,r')$-divergence of $X$. Also, note that $\mathcal{D}^{q,q}(\mathbb{X}) = \mathcal{D}^{q}(\mathbb{X})$. In this paper, we will be solely interested in the case $q = \infty$; for the generalised Gauss-Green formula, we will consider arbitrary $r$, but actually for the study of least gradient functions it is sufficient to consider the case $q = r = \infty$.

Finally, let us note that it is also possible to define the divergence in a similar fashion also in the case when it is represented by a Radon measure; following \cite{BCM}, we set
$$ \mathcal{DM}^q(\mathbb{X}) = \left\{ X \in L^q(T\mathbb{X}): \, \exists \, \mu \in \mathcal{M}(\mathbb{X}) \,\, \int_\mathbb{X} g \, d\mu = - \int_{\mathbb{X}} dg(X) \, d\nu \ \ \forall g \in \mbox{Lip}_{bs}(\mathbb{X}) \right\}. $$
We still write $\mbox{div}(X) = \mu$.

Using the Leibniz rule for the differential, see \cite[Corollary 2.2.8]{Gig}, it is easy to see that whenever $X \in \mathcal{D}^q(\mathbb{X})$ and $f \in L^\infty(\mathbb{X},\nu) \cap D^{1,p}(\mathbb{X})$ with $\vert D f \vert \in L^\infty(\mathbb{X},\nu)$, we have
\begin{equation*}
fX \in \mathcal{D}^q(\mathbb{X}) \quad \hbox{and} \quad \mbox{div}(fX) = df(X) + f \mbox{div}(X).
\end{equation*}
Furthermore, whenever $X \in L^\infty(T\mathbb{X})$ with $\mbox{div}(X) \in L^q(\mathbb{X},\nu)$, we have
\begin{equation}\label{eq:leibnizformula}
fX \in L^\infty(T\mathbb{X}), \quad \mbox{div}(fX) \in L^q(\mathbb{X},\nu) \quad \hbox{and} \quad \mbox{div}(fX) = df(X) + f \mbox{div}(X).
\end{equation}

In the course of the paper, we will extensively rely on the first order differential structure presented above. It is well-defined on metric spaces which are complete and separable. A priori, the structure is not defined locally - the objects $T^* \mathbb{X}$ and $T \mathbb{X}$ are not necessarily well-defined (the notation $L^p(T^* \mathbb{X})$ and $L^q(T\mathbb{X})$ is purely formal) and it is not immediate how to localise it to an open set $\Omega \subset \mathbb{X}$ (nonetheless, some positive results on pointwise identification of the tangent and cotangent modules can be found in \cite{LP} and \cite{LPR}). However, whenever $\Omega \subset \mathbb{X}$ is an open bounded set, then $\overline{\Omega}$ is also a complete and separable metric space; hence, the whole first-order differential structure described above may be defined on $\overline{\Omega}$ as well. Under the assumptions of Remark \ref{rem:extensiontoclosure}, we may identify Newton-Sobolev functions on $\Omega$ and $\overline{\Omega}$. Then, on $\overline{\Omega}$ the Newton-Sobolev space is equivalent to the Sobolev space defined by test-plans as in \cite{Gig} and \cite{BCM}, see \cite[Theorem B.4]{Gig2}. Using this identification, we may also define the differential structure on $\Omega$ if it is sufficiently regular; with a slight abuse of notation, we write $L^p(T^* \Omega)$, $L^q(T\Omega)$, $\mathcal{D}^q(\Omega)$  and $\mathcal{D}^{q,r}(\Omega)$, even though technically these objects are defined via an isometric extension to $\overline{\Omega}$.  However, note that $\mathcal{DM}^q(\Omega)$ cannot be defined in this way; for a vector field in $\mathcal{DM}^q(\overline{\Omega})$, its divergence may be a measure which gives mass to the boundary $\partial\Omega$. Indeed, this is very often the case, see the comment to Theorem \ref{thm:bufacomimiranda}.

However, with this understood, the definition of the divergence introduced above is not suitable for our purposes. Indeed, on a bounded domain, the definition of the divergence as above takes into account the boundary effects. In order to prove a relaxed version of the Gauss-Green formula using Anzellotti pairings in Section \ref{sec:Anzellotti}, we need to have a notion of divergence which will only see the structure of $X$ inside the open set $\Omega$ (even though it is not clearly defined locally). The solution we suggest is to test the definition of the divergence using only functions which vanish at the boundary. Given an open bounded set $\Omega \subset \mathbb{X}$ which satisfies the assumptions of Theorem \ref{thm:traces}, we set
$$ \mathcal{D}_0^q(\Omega) = \left\{ X \in  L^q(T\Omega): \, \exists f \in L^q(\Omega,\nu) \,\, \int_\Omega fg d\nu = - \int_{\Omega} dg(X) d\nu \ \ \forall g \in  W^{1,p}_0(\Omega,d,\nu) \right\},$$
where $W^{1,p}_0(\Omega,d,\nu)$ is the space of Sobolev functions in $W^{1,p}(\Omega,d,\nu)$ with zero trace. We again say that the (uniquely defined) function $f$ is the divergence of $X$ (when it is clear from the context) and we write $\mbox{div}_0(X) = f$.

Similarly, for $\frac1r + \frac1s = 1$, we set
$$ \mathcal{D}_0^{q,r}(\Omega) = \bigg\{ X \in L^q(T\Omega): \,\, \exists f \in L^r(\Omega,\nu) \quad \forall g \in  W_0^{1,p}(\Omega,d,\nu) \cap L^s(\Omega,\nu) \qquad $$
$$ \qquad\qquad\qquad\qquad\qquad\qquad\qquad\qquad\qquad\qquad\qquad \int_\Omega fg \, d\nu = - \int_{\Omega} dg(X) \, d\nu \bigg\}. $$
We still write $\mbox{div}_0(X) = f$.  Since under the assumptions of Theorem \ref{thm:traces} Lipschitz functions are dense in $W^{1,p}(\mathbb{X},d,\nu)$, the divergence does not depend on $r$ in the following sense: if $f$ is the $(q,r)$-divergence of $X$ and $f \in L^{r'}(\mathbb{X},\nu)$, then it is also the $(q,r')$-divergence of $X$.

Furthermore, the divergence $\mbox{div}_0$ also has property \eqref{eq:leibnizformula}, i.e. whenever $X \in \mathcal{D}_0^q(\Omega)$ and $f \in L^\infty(\Omega,\nu) \cap D^{1,p}(\Omega)$ with $|Df| \in L^\infty(\Omega,\nu)$, we have
\begin{equation*}
fX \in \mathcal{D}_0^q(\Omega) \quad \hbox{and} \quad \mbox{div}_0(fX) = df(X) + f \mbox{div}_0(X),
\end{equation*}
and whenever $X \in L^\infty(T\Omega)$ with $\mbox{div}_0(X) \in L^q(\Omega,\nu)$, we have
\begin{equation}\label{eq:leibnizformulav2}
fX \in L^\infty(T\Omega), \quad \mbox{div}_0(fX) \in L^q(\Omega,\nu) \quad \hbox{and} \quad \mbox{div}_0(fX) = df(X) + f \mbox{div}_0(X).
\end{equation}

We will comment on the relationship between the two definitions of the divergence in Section \ref{sec:Anzellotti}. In short, Theorem \ref{thm:bufacomimiranda} roughly says that the divergence $\mbox{div}(X)$ is the divergence $\mbox{div}_0(X)$ plus a boundary term which has an interpretation of the normal trace.

\section{A general Gauss-Green formula on metric measure spaces}\label{sec:Anzellotti}

Let us first introduce the assumptions on the metric measure space $\mathbb{X}$ that we will use for the remainder of the paper. Suppose that the metric space $(\mathbb{X},d)$ is complete, separable, equipped with a doubling measure $\nu$, and that the metric measure space $(\mathbb{X},d,\nu)$ supports a weak $(1,1)$-Poincar\'e inequality. In particular, these assumptions imply that $\mathbb{X}$ is locally compact, see \cite[Proposition 3.1]{BB}.

Moreover, a subset $\Omega \subset \mathbb{X}$ is always understood to be open and bounded. Furthermore, we suppose that $\nu(\partial\Omega) = 0$ and that $\Omega$ supports a weak $(1,1)$-Poincar\'e inequality. These are exactly the assumptions of Remark \ref{rem:extensiontoclosure}, which allows us to use the first order differential structure on $\Omega$. We will later introduce additional assumptions when necessary; typically, these will be the assumptions required to obtain existence of traces as in Theorem \ref{thm:traces}.

\subsection{A refined approximation result}

In order to define a generalised version of Anzellotti pairings  on open bounded sets, we will need to approximate a $BV$ function by regular enough functions, in the spirit of \cite[Lemma 5.2]{Anz}. Existence of a sequence of locally Lipschitz functions which approximate the desired function in the strict topology is automatic by virtue of Definition \ref{dfn:totalvariationonmetricspaces}, but  it is not enough for the approximation arguments in the definition of Anzellotti pairings and we will require some additional properties of the approximating sequence. The first result we will need is the following Lemma proved in \cite[Lemma 5.1]{GM2021}. Typically, we will use it for $\mathbb{X} = \Omega$, which can be viewed as a metric measure space in its own right.

\begin{lemma}\label{lem:lipschitzapproximation}
Suppose that $u \in BV(\mathbb{X},d, \nu)$. There exists a sequence of Lipschitz functions $u_n \in \mbox{Lip}(\mathbb{X}) \cap BV(\mathbb{X},d,\nu)$ such that: \\
$(1)$ $u_n \rightarrow u$ strictly in $BV(\mathbb{X},d,\nu)$;
\\
$(2)$ Let $p \in [1,\infty)$. If $u \in L^p(\mathbb{X},\nu)$, then $u_n \in L^p(\mathbb{X},\nu)$ and $u_n \rightarrow u$ in $L^p(\mathbb{X},\nu)$;
\\
$(3)$ If $u \in L^\infty(\mathbb{X},\nu)$, then $u_n \in L^\infty(\mathbb{X},\nu)$ and $u_n \rightharpoonup u$ weakly* in $L^\infty(\mathbb{X},\nu)$.
\end{lemma}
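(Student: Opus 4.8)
The plan is to start from a total-variation-efficient Lipschitz approximation provided directly by the definition of the total variation, and then to compose it with a truncation that forces the correct integrability while only decreasing the upper gradient. By \eqref{dfn:totalvariationonmetricspaces}, together with the remark that under the doubling and weak $(1,1)$-Poincar\'e assumptions the approximating functions may be taken globally Lipschitz rather than merely locally Lipschitz, there is a sequence $w_n \in \mathrm{Lip}(\mathbb{X}) \cap BV(\mathbb{X},d,\nu)$ with $w_n \to u$ in $L^1(\mathbb{X},\nu)$ and $\liminf_n \int_{\mathbb{X}} g_{w_n}\,d\nu = |Du|_\nu(\mathbb{X})$; passing to a subsequence I may assume this is an honest limit. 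Since $\int_{\mathbb{X}} g_{w_n}\,d\nu = |Dw_n|_\nu(\mathbb{X})$ for Lipschitz $w_n$, the sequence $(w_n)$ already converges to $u$ strictly in $BV$, which is what will give $(1)$. Its only defect is that it need not reflect the membership of $u$ in $L^p$ or $L^\infty$.

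For $M>0$ let $T_M(t) := \max(-M,\min(M,t))$ denote truncation at level $M$, which is $1$-Lipschitz. Then $T_M \circ w_n \in \mathrm{Lip}(\mathbb{X})$, and by the chain rule/locality of (minimal) upper gradients under post-composition with a $1$-Lipschitz map, $g_{T_M \circ w_n} \le g_{w_n}$ $\nu$-a.e., so $\int_{\mathbb{X}} g_{T_M\circ w_n}\,d\nu \le \int_{\mathbb{X}} g_{w_n}\,d\nu$. Consequently, whenever $T_M\circ w_n \to u$ in $L^1$, the lower semicontinuity of the total variation gives $|Du|_\nu(\mathbb{X}) \le \liminf_n \int_{\mathbb{X}} g_{T_M\circ w_n}\,d\nu$, and combining this with the upper bound and with $\int_{\mathbb{X}} g_{w_n}\,d\nu \to |Du|_\nu(\mathbb{X})$ forces $\int_{\mathbb{X}} g_{T_M \circ w_n}\,d\nu \to |Du|_\nu(\mathbb{X})$; that is, truncation preserves strict convergence. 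The entire task is therefore to choose the truncation levels so that $T_M \circ w_n$ still converges to $u$ in $L^1$ and in addition carries the right integrability.

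I would then split according to whether $u$ is bounded. If $u \in L^\infty(\mathbb{X},\nu)$ with $\|u\|_\infty = M$, I set $u_n := T_M \circ w_n$. Then $|u_n| \le M$ uniformly, and since $u = T_M \circ u$ the $1$-Lipschitz property gives $|u_n - u| \le |w_n - u|$, so $u_n \to u$ in $L^1$ and hence strictly in $BV$ by the previous step; the uniform $L^\infty$ bound together with $L^1$-convergence yields weak-$*$ convergence, which is $(3)$, while for $1 \le p < \infty$ the elementary bound $|u_n-u|^p \le (2M)^{p-1}|u_n-u|$ gives $\|u_n-u\|_{L^p}^p \le (2M)^{p-1}\|u_n-u\|_{L^1} \to 0$, which is $(2)$. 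If instead $u \notin L^\infty$, I set $u_n := T_{M_n} \circ w_n$ with $M_n \to \infty$ chosen to grow slowly, namely so that $M_n^{\,q}\,\|w_n-u\|_{L^1} \to 0$ for every $q>0$ (for instance $M_n = \min\{n,\,\lceil\log(1/\|w_n-u\|_{L^1})\rceil\}$). Writing $u_n - u = (T_{M_n}w_n - T_{M_n}u) + (T_{M_n}u - u)$ and using $|T_{M_n}w_n - T_{M_n}u| \le |w_n-u|$ together with $T_{M_n}u \to u$ (dominated convergence) gives $u_n \to u$ in $L^1$, hence $(1)$; and for each $p$ with $u \in L^p$ the first summand is controlled by $\|T_{M_n}w_n - T_{M_n}u\|_{L^p} \le (2M_n)^{(p-1)/p}\|w_n-u\|_{L^1}^{1/p} \to 0$ by the slow-growth choice, while $\|T_{M_n}u - u\|_{L^p}\to 0$ by dominated convergence, yielding $(2)$; property $(3)$ is vacuous in this case.

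The delicate point, and the one deserving most care, is producing a single sequence valid uniformly over all exponents $p$ for which $u \in L^p$: this is exactly why in the unbounded case the truncation levels cannot be sent to infinity at an arbitrary rate but must be tied to the $L^1$-rate $\|w_n-u\|_{L^1}$ through the condition $M_n^{\,q}\|w_n-u\|_{L^1}\to 0$ for all $q>0$, and why the bounded and unbounded regimes must be treated separately (a fixed level in the former, a slowly diverging one in the latter). The remaining inputs, though routine, are the chain-rule inequality $g_{T_M\circ w}\le g_w$ for minimal upper gradients and the identity $\int_{\mathbb{X}} g_{w_n}\,d\nu = |Dw_n|_\nu(\mathbb{X})$ for Lipschitz functions, since it is precisely these that let truncation lower the total variation while lower semicontinuity pins it back to $|Du|_\nu(\mathbb{X})$.
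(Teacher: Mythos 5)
Your proposal is correct and takes essentially the same approach as the paper's own proof (which is cited from \cite{GM2021} and echoed in the proof of Lemma \ref{lem:goodapproximation} here): approximate $u$ by a strictly convergent Lipschitz sequence coming from the definition of the total variation, then truncate, using that post-composition with the $1$-Lipschitz truncation does not increase upper gradients while lower semicontinuity of the total variation forces the truncated sequence to remain strictly convergent. Your explicit slow-growth choice of the levels $M_n$, tied to $\|w_n-u\|_{L^1}$ so that $M_n^{\,q}\|w_n-u\|_{L^1}\to 0$ for every $q>0$, correctly produces a single sequence working for all exponents $p$ simultaneously, which is the same diagonal-type device used in the cited proof.
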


In the course of the paper, we will also need a finer approximation: given an appropriate domain $\Omega \subset \mathbb{X}$, we will require additionally that the trace $T_\Omega$ of the approximating sequence is the same as the trace of the desired function. In this case, we may no longer require the approximating sequence to be Lipschitz. In \cite{LS}, it was proved that given $u \in BV(\Omega,d,\nu)$ we can choose an approximating sequence $u_n \in \mbox{Lip}_{loc}(\Omega)$ which converges strictly to $u$ and such that the trace of $u$ is preserved, i.e. $T_\Omega u_n = T_\Omega u$ $\mathcal{H}$-a.e.  The Lemma below is an upgraded version of \cite[Corollary 6.7]{LS}; point $(1)$ is precisely the content of \cite[Corollary 6.7]{LS}, while points $(2)$ and $(3)$ yield additional properties of the approximating sequence. We provide a proof of point $(1)$ for the reader's convenience, since the other points will follow by
making modifications to that proof.

\begin{lemma}\label{lem:goodapproximation}
Under the assumptions of Theorem \ref{thm:traces}, suppose that $u \in BV(\Omega,d, \nu)$. Then, there exist locally Lipschitz functions $u_n \in \mbox{Lip}_{loc}(\Omega) \cap BV(\Omega,d,\nu)$ such that: \\
$(1)$ $u_n \rightarrow u$ strictly in $BV(\Omega,d,\nu)$ and $T_\Omega u_n = T_\Omega u$ $\mathcal{H}$-a.e.;
\\
$(2)$ Let $p \in [1,\infty)$. If $u \in L^p(\Omega,\nu)$, then $u_n \in L^p(\Omega,\nu)$ and $u_n \rightarrow u$ in $L^p(\Omega,\nu)$;
\\
$(3)$ If $u \in L^\infty(\Omega,\nu)$, then $u_n \in L^\infty(\Omega,\nu)$ and $u_n \rightharpoonup u$ weakly* in $L^\infty(\Omega,\nu)$.
\end{lemma}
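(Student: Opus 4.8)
The plan is to follow the boundary-adapted discrete-convolution construction underlying \cite[Corollary 6.7]{LS}: I would reproduce the trace-preserving strict approximation of part $(1)$ and then read off $(2)$ and $(3)$ directly from the explicit averaging form of the approximants, which is exactly the kind of modification the statement alludes to. Concretely, for each $n$ I would fix a Whitney-type covering of $\Omega$ by balls $B_i^{(n)} = B(x_i,r_i^{(n)})$ whose radii are comparable to $\operatorname{dist}(x_i,\mathbb{X}\setminus\Omega)$ near $\partial\Omega$ but are capped by $1/n$ in the interior, with bounded overlap (each point lying in at most a fixed number of the dilated balls $2B_i^{(n)}$). Such coverings exist because $\nu$ is doubling and $\Omega$ supports a weak $(1,1)$-Poincar\'e inequality, and they carry a subordinate Lipschitz partition of unity $\{\phi_i^{(n)}\}$ with $0\le\phi_i^{(n)}\le1$, $\sum_i\phi_i^{(n)}\equiv1$ on $\Omega$, $\operatorname{supp}\phi_i^{(n)}\subset 2B_i^{(n)}$ and $|\nabla\phi_i^{(n)}|\le C/r_i^{(n)}$. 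The approximants are the discrete convolutions
$$ u_n(x) = \sum_i \phi_i^{(n)}(x)\,\dashint_{B_i^{(n)}} u\,d\nu . $$
Locally in $\Omega$ this sum has boundedly many nonzero terms, so each $u_n \in \mbox{Lip}_{loc}(\Omega)$.

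For $(1)$, the convergence $u_n\to u$ in $L^1(\Omega,\nu)$ is standard (in the interior the radii shrink to $0$, near $\partial\Omega$ the averages stay close to $u$ in the $L^1$-averaged sense). For the total variation, using $\sum_i\nabla\phi_i^{(n)}=0$ one writes on each ball $2B_j^{(n)}$
$$ \nabla u_n = \sum_i \Big(\dashint_{B_i^{(n)}}u - \dashint_{B_j^{(n)}}u\Big)\,\nabla\phi_i^{(n)}, $$
estimates the differences of averages via the weak $(1,1)$-Poincar\'e inequality, and uses $|\nabla\phi_i^{(n)}|\le C/r_i^{(n)}$ together with $r_i^{(n)}\approx r_j^{(n)}$ for overlapping balls; summing over $j$ with bounded overlap and exploiting the interior refinement gives the sharp bound $\limsup_n|Du_n|_\nu(\Omega)\le |Du|_\nu(\Omega)$, while lower semicontinuity of the total variation gives the matching lower bound, hence strict convergence. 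The identity $T_\Omega u_n = T_\Omega u$ $\mathcal{H}$-a.e.\ is where the Whitney (rather than uniform) scaling near $\partial\Omega$ is essential: as $x\to x_0\in\partial\Omega$ the balls contributing at $x$ have radius comparable to $\operatorname{dist}(x,\partial\Omega)\to0$, so their averages reproduce $T_\Omega u(x_0)$ in the averaged sense of Definition~\ref{dfn:trace}. This is precisely the computation of \cite[Corollary 6.7]{LS}, which I would reproduce.

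For $(2)$ and $(3)$ I would exploit that $u_n$ is, pointwise, a convex combination of averages of $u$. By Jensen's inequality, $|u_n|^p\le\sum_i\phi_i^{(n)}\big(\dashint_{B_i^{(n)}}|u|^p\big)$, and integrating, using doubling together with the bounded overlap of $\{2B_i^{(n)}\}$, yields the uniform bound $\|u_n\|_{L^p(\Omega,\nu)}\le C\|u\|_{L^p(\Omega,\nu)}$ for every $p\in[1,\infty)$; in particular, if $u\in L^\infty(\Omega,\nu)$ then $\|u_n\|_{L^\infty(\Omega,\nu)}\le\|u\|_{L^\infty(\Omega,\nu)}$, since an average never exceeds the essential supremum. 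The associated operators $u\mapsto u_n$ are thus uniformly bounded on $L^p(\Omega,\nu)$ and converge to the identity on the dense class of bounded Lipschitz functions (by the Lebesgue differentiation theorem and interior refinement), so a standard density argument upgrades the $L^1$-convergence of $(1)$ to convergence in $L^p(\Omega,\nu)$, proving $(2)$. Finally, $(3)$ follows by combining the uniform bound $\|u_n\|_{L^\infty}\le\|u\|_{L^\infty}$ with the $L^1$-convergence: given $g\in L^1(\Omega,\nu)$ and $g_k\in L^\infty(\Omega,\nu)$ with $g_k\to g$ in $L^1$, the splitting
$$ \int_\Omega (u_n-u)\,g\,d\nu = \int_\Omega (u_n-u)\,g_k\,d\nu + \int_\Omega (u_n-u)\,(g-g_k)\,d\nu $$
shows $\int_\Omega u_n g\,d\nu\to\int_\Omega u g\,d\nu$, i.e.\ $u_n\rightharpoonup u$ weakly$^*$ in $L^\infty(\Omega,\nu)$.

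The main obstacle is entirely contained in step $(1)$: obtaining the \emph{sharp} total-variation bound (so that the convergence is strict rather than merely bounded by a constant multiple of $|Du|_\nu(\Omega)$) \emph{simultaneously} with the trace identity, which requires gluing the local averages without creating spurious variation near $\partial\Omega$ while still forcing the averages to reproduce $T_\Omega u$. This is the delicate estimate of \cite[Corollary 6.7]{LS} and is the reason the construction must refine in the interior yet remain genuinely Whitney at the boundary. By contrast, properties $(2)$ and $(3)$ are essentially formal consequences of the averaging structure of the $u_n$ and introduce no new geometric difficulty.
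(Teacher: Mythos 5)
Your approximants are exactly the discrete convolutions $v_n$ that appear in the paper's proof (Whitney radii $r_i=\min\{d(x_i,\mathbb{X}\setminus\Omega)/20\lambda,\,1/n\}$, partition of unity, averages on balls), and your claims about $L^1$-convergence and trace preservation for them are correct. The genuine gap is the assertion that ``exploiting the interior refinement gives the sharp bound $\limsup_n|Du_n|_\nu(\Omega)\le|Du|_\nu(\Omega)$.'' The telescoping-plus-Poincar\'e estimate you invoke produces, at every scale, an upper gradient of the form \eqref{eq:uppergradientfordisconv}, namely $g\le C(C_d,C_P,\lambda)\sum_j \1_{B_j}|Du|_\nu(5\lambda B_j)/\nu(B_j)$, and summing with bounded overlap yields only $\int_\Omega g_{u_n}\,d\nu\le C'\,|Du|_\nu(\Omega)$ with a fixed constant $C'=C'(C_d,C_P,\lambda,C_0)>1$ inherited from the Poincar\'e constant, the dilation $\lambda$, and the overlap constant. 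Refining the scale does not send this constant to $1$: the same lossy estimate is applied on every ball at every scale, so it survives the limit, and discrete convolutions alone give comparability of total variations, not strictness (this is precisely why the relaxation \eqref{dfn:totalvariationonmetricspaces} is taken over all locally Lipschitz approximations). You also misattribute \cite[Corollary 6.7]{LS}: its proof is not a pure discrete-convolution computation but a gluing argument, which is what the paper reproduces.

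The repair is the paper's gluing. Choose $\delta$ with $|Du|_\nu(\Omega\setminus\Omega_\delta)<\varepsilon$, let $\eta$ be a $\tfrac{4}{\delta}$-Lipschitz cutoff equal to $1$ on $\Omega_{\delta/2}$ and vanishing near $\partial\Omega$, and set $u_n=\eta w_n+(1-\eta)v_n$, where $w_n$ is the (generally not trace-preserving) strict approximation furnished directly by \eqref{dfn:totalvariationonmetricspaces} and $v_n$ is your discrete convolution. The sharp constant in the interior then comes from $w_n$; the trace identity comes from $v_n$, since $u_n=v_n$ near $\partial\Omega$ and $u_n-u$ has zero trace by \cite[Proposition 6.5]{LS}; and the multiplicative loss of the discrete convolution is confined to the thin strip, costing only $C|Du|_\nu(\Omega\setminus\Omega_\delta)\le C\varepsilon$, after which a diagonal argument in $\varepsilon$ gives $(1)$. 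Note also that once $w_n$ enters the construction, your Jensen-inequality argument for $(2)$ and $(3)$ no longer applies verbatim: the strict approximants from \eqref{dfn:totalvariationonmetricspaces} need not be bounded in $L^p(\Omega,\nu)$, which is why the paper truncates $w_n$ (truncation does not increase the slope) before gluing; your observation that the discrete-convolution part is automatically bounded by averaging remains valid for $v_n$ only.
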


In the proof, we will rely on the notion of discrete convolutions, which we will shortly describe. There are various definitions in the literature and we will follow the ones described in \cite{HKT,LS}. Fix an open set $\Omega \subset \mathbb{X}$ and a scale $R > 0$. Then, take a {\it Whitney type} covering $\mathcal{B} = \{ B_j = B(x_j, r_j) \}$ of $\Omega$, i.e. a covering which satisfies the following properties (here, $\lambda$ is the constant in the weak $(1,1)$-Poincar\'e inequality):

1. For each $j \in \mathbb{N}$, we have
$$ r_j = \min \{ d(x_j, X \backslash \Omega) / 20\lambda, R \};$$

2. The covering has a bounded overlap property, namely for each $j \in \mathbb{N}$, the ball $5 \lambda B_j$ intersects at most $C_0 = C_0(C_d, \lambda)$ other balls in the covering;

3. If $5 \lambda B_j$ intersects $5 \lambda B_k$, then $r_j \leq 2 r_k$.

Given such a covering, there exists a partition of unity $\phi_j$ subordinate to that covering. Namely, for every $j$ the function $\phi_j$ is $C_0 / r_j$-Lipschitz, $0 \leq \phi_j \leq 1$, $\sum_{j = 1}^{\infty} \phi_j = 1$, and $\mbox{supp}(\phi_j) \subset 2B_j$. Then, for every $u \in L^1(\Omega,\nu)$ we may define its {\it discrete convolution} by the formula
$$ u_{\mathcal{B}} = \sum_{j = 1}^\infty u_{B_j} \phi_j.$$
Then, among other properties, we have that $u_{\mathcal{B}} \in \mbox{Lip}_{loc}(\Omega) \cap L^1(\Omega,\nu)$ and if $u_{\mathcal{B}_n}$ is a sequence of discrete convolutions of $u$ at scales $\frac{1}{n}$, then $u_{\mathcal{B}_n} \rightarrow u$ in $L^1(\Omega,\nu)$. Moreover, $u_{\mathcal{B}}$ admits an upper gradient
\begin{equation}\label{eq:uppergradientfordisconv}
g = C(C_d, C_P, \lambda) \sum_{j=1}^\infty \1_{B_j} \frac{|Du|_\nu(5 \lambda B_j)}{\nu(B_j)}.
\end{equation}

\begin{proof}
(1) For every $\delta > 0$, denote by $\Omega_\delta$ the set
$$ \Omega_\delta = \{ y \in \Omega: d(y, X \backslash \Omega) > \delta \}. $$
Fix $\varepsilon > 0$ and choose $\delta$ small enough so that $|Du|_\nu(\Omega \backslash \Omega_\delta) < \varepsilon$. Then, define
$$ \eta(y) = \max \bigg\{ 0, 1 - \frac{4}{\delta} d(y,\Omega_{\delta/2})  \bigg\}. $$
This function will be used to glue together a strict approximation of $u$ inside $\Omega$ with a discrete convolution near $\partial\Omega$. The function $\eta$ takes values in $[0,1]$ and is Lipschitz with the Lipschitz constant $\frac{4}{\delta}$.

Denote by $v_n \in \mbox{Lip}_{loc}(\Omega)$ the sequence of discrete convolutions of $u$ at scale $\frac{1}{n}$. Denote by $w_n \in \mbox{Lip}_{loc}(\Omega)$ the sequence which converges strictly to $u$ given by \eqref{dfn:totalvariationonmetricspaces}. Then, the function
$$ u_n := \eta w_n + (1 - \eta) v_n $$
is locally Lipschitz in $\Omega$ and it has an $1$-upper gradient (see \cite[Lemma 2.18]{BB})
$$ g_{u_n} := |\nabla \eta| |w_n - v_n| + \eta |\nabla w_n| + (1-\eta) g_{v_n}, $$
where $g_{v_n}$ is the $1$-upper gradient of $v_n$ given by the formula \eqref{eq:uppergradientfordisconv}. Then, we have $u_n \rightarrow u$ in $L^1(\Omega,\nu)$. Moreover, since $0 \leq \eta \leq 1$ and $\eta = 0$ in $\Omega \backslash \Omega_{\delta/2}$, we have
$$ \limsup_{n \rightarrow \infty} \int_\Omega g_{u_n} \, d\nu \leq \limsup_{n \rightarrow \infty} \frac{4}{\delta} \int_\Omega |w_n - v_n| \, d\nu + \limsup_{n \rightarrow \infty} \int_\Omega |\nabla w_n| \, d\nu + \limsup_{n \rightarrow \infty} \int_{\Omega \backslash \Omega_{\delta / 2}} g_{v_n} \, d\nu \leq $$
$$ \leq 0 + |Du|_\nu(\Omega) + C |Du|_\nu(\Omega \backslash \Omega_\delta) \leq |Du|_\nu(\Omega) + C\varepsilon.$$
Moreover,  since assumptions of Theorem \ref{thm:traces} imply that $\mathcal{H}(\partial\Omega) < \infty$, $u_n - u$ has trace zero by virtue of \cite[Proposition 6.5]{LS}. By letting $\varepsilon \rightarrow 0$ and a diagonalisation argument we obtain a subsequence $u_{n_k}$ converging strictly to $u$.

(2) We need to show that the functions $u_n$ constructed above additionally satisfy $u_n \in L^p(\Omega,\nu)$ and $u_n \rightarrow u$ in $L^p(\Omega,\nu)$. Firstly, notice that since $v_n$ are discrete convolutions of $u$, by \cite[Lemma 5.3]{HKT} we have $v_n \in L^p(\Omega,\nu)$ and $v_n \rightarrow u$ in $L^p(\Omega,\nu)$. To end the proof, we need to show that we can modify the strict approximation $w_n$ given by Definition \ref{dfn:totalvariationonmetricspaces} in such a way that it converges to $u$ also in $L^p(\Omega,\nu)$.

 Given $v \in L^p(\Omega,\nu)$, denote
$$ v_M(x) = \threepartdef{M}{v(x) > M;}{v(x)}{v(x) \in [-M,M];}{-M}{v(x) < -M.}$$
Observe that $v_M \in L^\infty(\Omega,\nu)$ and $v_M \rightarrow v$ in $L^p(\Omega,\nu)$ as $M \rightarrow \infty$.  Arguing as in the proof of Lemma \ref{lem:lipschitzapproximation}, we get that  there exists a sequence $(w_{n_k})_{M_k}$ such that
$$ (w_{n_k})_{M_{k}} \rightarrow u \quad \mbox{ in } L^p(\Omega,\nu).$$
Moreover, this sequence also converges strictly in $BV(\Omega,d,\nu)$, since truncations do not increase the slope:
$$ |Du|_\nu(\Omega) \leq \liminf_{k \rightarrow \infty} \int_\Omega |\nabla (w_{n_k})_{M_k}| \, d\nu \leq \liminf_{k \rightarrow \infty} \int_\Omega |\nabla w_{n_k}| \, d\nu = |Du|_\nu(\Omega).$$
Hence, possibly replacing the sequence $w_n$ by $(w_{n_k})_{M_k}$, we may require that $w_n \in L^p(\Omega,\nu)$ and $w_n \rightarrow u$ strictly in $BV(\Omega,d,\nu)$ and in the norm convergence in $L^p(\Omega,\nu)$. Hence, up to the modification of $w_n$ described above, we may require that $u_n$ satisfies these properties as well.

(3) First, notice that since $v_n$ are defined as discrete convolutions of $u$, we have $\| v_n \|_{L^\infty(\Omega,\nu)} \leq \| u \|_{L^\infty(\Omega,\nu)}$, since $v_n$ is a sum of averages of $u$ multiplied by a partition of unity. Moreover, $v_n \rightarrow u$ in $L^p(\Omega,\nu)$ for every $p \in [1,\infty)$. Then, notice that if $w_n \rightarrow u$ is the sequence given by  \eqref{dfn:totalvariationonmetricspaces}, then $(w_n)$ is bounded in $L^\infty(\Omega,\nu)$ by $\| u \|_{L^\infty(\Omega,\nu)}$ and by the argument from the proof of point (2) it converges to $u$ in $L^p(\Omega,\nu)$ for every $p \in [1,\infty)$.

Hence, the sequence $u_n$ is bounded in $L^\infty(\Omega,\nu)$ and converges to $u$ in $L^p(\Omega,\nu)$ for every $p \in [1,\infty)$. Hence, it admits a weakly* convergent subsequence $u_{n_k}$. By the uniqueness of the weak* limit, we have $u_{n_k} \rightharpoonup u$ weakly* in $L^\infty(\Omega,\nu)$.
\end{proof}

\subsection{Regular domains and Gauss-Green formula}

In order to provide a generalised Anzellotti pairing which satisfies a Green's formula, we will first need the Green's formula for Lipschitz functions in metric measure spaces. Recently, it was proved in \cite{BCM} under the assumption that $\Omega$ satisfies the following regularity assumption. Here, denote $$\Omega_t = \{ x \in \Omega: \, \mathrm{dist}(x, \Omega^c) \geq t \}.$$

\begin{definition}\label{def:regulardomain}
An open set $\Omega \subset \mathbb{X}$ is a {\it regular domain} if it has finite perimeter and
$$|D\1_\Omega|_\nu(\mathbb{X}) = \limsup_{t \rightarrow 0} \frac{\nu(\Omega \setminus \Omega_t)}{t}. $$
\end{definition}

An important feature of regular domains is that the infimum in the definition of the perimeter is achieved with an explicitly given sequence. Given a regular domain $\Omega$, for $t > 0$ we set
\begin{equation}
\varphi_t^\Omega(x) = \threepartdef{0}{x \in \Omega^c;}{\frac{\mathrm{dist}(x,\Omega^c)}{t}}{x \in \Omega \backslash \Omega_t;}{1}{x \in \Omega_t.}
\end{equation}
We call $\varphi_t^\Omega$ the {\it defining sequence} of $\Omega$. It is clear that $\varphi^\Omega_t \in \mathrm{Lip}_{b}(\mathbb{X})$ for all $t > 0$. The most important property of the defining sequence is given in the following Proposition.

\begin{proposition}\label{prop:weakconvergence}
Suppose that $\Omega \subset \mathbb{X}$ is a regular domain and $\varphi_t^\Omega$ is its defining sequence. Then,
\begin{equation*}
\lim_{t \rightarrow 0} \int_{\mathbb{X}} |\nabla \varphi_t^\Omega| \, d\nu =  |D\1_\Omega|_\nu(\mathbb{X}).
\end{equation*}
Moreover, we have  $|\nabla \varphi_t^\Omega| \, d\nu \rightharpoonup |D\1_\Omega|_\nu$ weakly* in $\mathcal{M}(\mathbb{X})$.
\end{proposition}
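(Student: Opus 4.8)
The plan is to prove the two assertions together, splitting the argument into a convergence of total masses and a weak* convergence of the associated measures. First I would observe that the defining sequence $\varphi_t^\Omega$ is a bounded sequence of Lipschitz functions with $\varphi_t^\Omega \to \1_\Omega$ in $L^1(\mathbb{X},\nu)$ as $t \to 0$: indeed $\varphi_t^\Omega = 1$ on $\Omega_t$, vanishes outside $\Omega$, and interpolates linearly in between, so $\1_\Omega - \varphi_t^\Omega$ is supported on $\Omega \setminus \Omega_t$, whose measure tends to zero because $\nu(\partial\Omega) = 0$ (or, more quantitatively, because the difference quotient $\nu(\Omega \setminus \Omega_t)/t$ stays bounded along the relevant sequence by the regular domain hypothesis). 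By lower semicontinuity of the total variation with respect to $L^1$ convergence, this immediately gives
\begin{equation*}
|D\1_\Omega|_\nu(\mathbb{X}) \leq \liminf_{t \rightarrow 0} \int_{\mathbb{X}} |\nabla \varphi_t^\Omega| \, d\nu.
\end{equation*}

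The heart of the matter is the matching upper bound $\limsup_{t \to 0} \int_{\mathbb{X}} |\nabla \varphi_t^\Omega| \, d\nu \leq |D\1_\Omega|_\nu(\mathbb{X})$, and this is exactly where the regular domain hypothesis enters. The key pointwise fact is that $\varphi_t^\Omega$ is $\frac{1}{t}$-Lipschitz and its slope $|\nabla \varphi_t^\Omega|$ is supported in $\overline{\Omega \setminus \Omega_t}$, since $\varphi_t^\Omega$ is locally constant both on $\Omega_t$ and on $\Omega^c$. Because $\mathrm{dist}(\cdot,\Omega^c)$ is $1$-Lipschitz, we have $|\nabla \varphi_t^\Omega| \leq \frac{1}{t}\1_{\Omega \setminus \Omega_t}$ $\nu$-a.e., so
\begin{equation*}
\int_{\mathbb{X}} |\nabla \varphi_t^\Omega| \, d\nu \leq \frac{\nu(\Omega \setminus \Omega_t)}{t}.
\end{equation*}
Taking $\limsup_{t \to 0}$ and invoking the defining identity $|D\1_\Omega|_\nu(\mathbb{X}) = \limsup_{t \to 0} \nu(\Omega \setminus \Omega_t)/t$ of Definition \ref{def:regulardomain} yields the upper bound. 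Combining the two inequalities proves that the full limit exists and equals $|D\1_\Omega|_\nu(\mathbb{X})$, which is the first assertion. I expect the slope estimate $|\nabla \varphi_t^\Omega| \leq \frac{1}{t}\1_{\Omega\setminus\Omega_t}$ to be the main obstacle to state cleanly, since the slope of a quotient of Lipschitz functions requires a little care near the interface $\partial\Omega_t$; but it follows from the $1$-Lipschitz property of the distance function and the chain rule for slopes.

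For the weak* convergence $|\nabla \varphi_t^\Omega| \, d\nu \rightharpoonup |D\1_\Omega|_\nu$ in $\mathcal{M}(\mathbb{X})$, I would argue as follows. The measures $|\nabla \varphi_t^\Omega| \, d\nu$ have uniformly bounded total mass by the first part, so by the sequential Banach-Alaoglu theorem (valid since $\mathbb{X}$ is locally compact and separable) every sequence $t_k \to 0$ admits a subsequence along which $|\nabla \varphi_{t_k}^\Omega| \, d\nu \rightharpoonup \mu$ weakly* for some finite nonnegative Radon measure $\mu$. Since the slopes are supported in $\overline{\Omega \setminus \Omega_{t}}$, which shrinks to $\partial\Omega$, the limit $\mu$ is concentrated on $\partial\Omega$. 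Lower semicontinuity of the total variation on each fixed open set $U$, applied to $\varphi_t^\Omega \to \1_\Omega$ in $L^1(U,\nu)$, gives $|D\1_\Omega|_\nu(U) \leq \mu(U)$ for open $U$, while the convergence of total masses forces $\mu(\mathbb{X}) = |D\1_\Omega|_\nu(\mathbb{X})$. These two facts together identify $\mu = |D\1_\Omega|_\nu$ (a nonnegative measure that dominates another on all open sets and has equal total mass must coincide with it). Since the limit is independent of the subsequence, the whole family converges weakly* to $|D\1_\Omega|_\nu$, which completes the proof.
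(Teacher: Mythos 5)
Your proof is self-contained where the paper's is not (the paper simply defers to the proof of \cite[Proposition 4.11]{BCM}), and its skeleton is the right one: lower semicontinuity for the liminf, the regular-domain identity for the limsup, and compactness plus identification of subsequential limits for the weak* statement. But two of your justifications are invalid as written, and both need repair.

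First, the estimate $|\nabla \varphi_t^\Omega| \leq \frac{1}{t}\1_{\Omega\setminus\Omega_t}$ $\nu$-a.e.\ is false in general. On the level set $S_t:=\{\mathrm{dist}(\cdot,\Omega^c)=t\}$, which is disjoint from $\Omega\setminus\Omega_t=\{0<\mathrm{dist}(\cdot,\Omega^c)<t\}$, the slope of $\varphi_t^\Omega=\min\{\mathrm{dist}(\cdot,\Omega^c)/t,\,1\}$ is typically $1/t$, not $0$ (take $\Omega=(0,1)\subset\R$ and $x=t$); the slope is a two-sided limsup, so no chain rule removes this. In a general metric measure space one can have $\nu(S_t)>0$ for some (at most countably many) values of $t$, and since the proposition asserts a limit over \emph{all} $t\to 0$, those values cannot be discarded. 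The same issue arises on $\partial\Omega$, though there the standing assumption $\nu(\partial\Omega)=0$ saves you. The repair is cheap: the slope vanishes on the open sets $\{\mathrm{dist}(\cdot,\Omega^c)>t\}$ and $\mathbb{X}\setminus\overline{\Omega}$, so $\int_{\mathbb{X}}|\nabla\varphi_t^\Omega|\,d\nu\le\frac{1}{t}\,\nu(\{0<\mathrm{dist}(\cdot,\Omega^c)\le t\})\le\frac{1}{t}\,\nu(\Omega\setminus\Omega_{(1+\delta)t})$ for every $\delta>0$, whence $\limsup_{t\to0}\int_{\mathbb{X}}|\nabla\varphi_t^\Omega|\,d\nu\le(1+\delta)\,|D\1_\Omega|_\nu(\mathbb{X})$; now let $\delta\to0$.

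Second, in the weak* part, the step ``lower semicontinuity gives $|D\1_\Omega|_\nu(U)\le\mu(U)$ for open $U$'' is not a valid inference. Writing $\mu_k=|\nabla\varphi_{t_k}^\Omega|\,\nu$, LSC gives $|D\1_\Omega|_\nu(U)\le\liminf_k\mu_k(U)$, while weak* convergence gives $\mu(U)\le\liminf_k\mu_k(U)$ for open $U$ --- the \emph{same} direction, so the two cannot be chained; mass may exit $U$ through $\partial U$ in the limit (e.g.\ $\delta_{1/k}\rightharpoonup\delta_0$ with $U=(0,1)$, where $\liminf_k\mu_k(U)=1>0=\mu(U)$). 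This is a genuine concern here, because all the mass of $\mu_k$ migrates to $\partial\Omega$, which can meet $\partial U$. The conclusion is nevertheless true, but it needs the compact-set half of portmanteau plus exhaustion: for open $V$ with $\overline{V}$ compact and $\overline{V}\subset U$, one has $|D\1_\Omega|_\nu(V)\le\liminf_k\mu_k(V)\le\limsup_k\mu_k(\overline{V})\le\mu(\overline{V})\le\mu(U)$, where the third inequality follows from testing with Urysohn functions in $C_c(\mathbb{X})$ and outer regularity of $\mu$; then let $V\uparrow U$, which is possible since $\mathbb{X}$ is proper. Together with $\mu(\mathbb{X})\le\liminf_k\mu_k(\mathbb{X})=|D\1_\Omega|_\nu(\mathbb{X})$ (from the first part), your final identification then closes correctly: $\mu\ge|D\1_\Omega|_\nu$ on open, hence on all Borel sets, and the total masses agree, so $\mu=|D\1_\Omega|_\nu$.
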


\begin{proof}
This is a consequence of the proof of \cite[Proposition 4.11]{BCM}, in particular of the equation (4.12) in that paper.
\end{proof}

In particular, this result implies that the notions of perimeter and the inner perimeter coincide for regular domains.

\begin{proposition}
Suppose that $\Omega \subset \mathbb{X}$ is a regular domain. Then, for any Borel set $A \subset \mathbb{X}$, the perimeter and inner perimeter of $\Omega$ in $A$ coincide, i.e.
\begin{equation*}
P_+(\Omega, A) = |D\1_\Omega|_\nu(A).
\end{equation*}
In particular, the measures $|D\1_\Omega|_\nu$ and $P_+(\Omega,\cdot)$ coincide.
\end{proposition}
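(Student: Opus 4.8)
The plan is to prove the two measures agree by establishing one inequality at the level of arbitrary Borel sets and the reverse inequality only for the total mass, and then to close the gap with a soft measure-theoretic argument. Concretely, I would first show that
\begin{equation*}
P_+(\Omega, A) \geq |D\1_\Omega|_\nu(A) \qquad \text{for every Borel set } A \subset \mathbb{X},
\end{equation*}
and separately that $P_+(\Omega, \mathbb{X}) \leq |D\1_\Omega|_\nu(\mathbb{X})$. Granting that $P_+(\Omega,\cdot)$ is a genuine Radon measure (which follows once we know $P_+(\Omega,\mathbb{X}) < \infty$), these two facts force equality: the set function $A \mapsto P_+(\Omega,A) - |D\1_\Omega|_\nu(A)$ is then a nonnegative measure whose total mass is $\leq 0$, hence it vanishes identically.

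For the first inequality I would argue purely by comparing admissible classes. In the definition of the total variation $|D\1_\Omega|_\nu(U)$ on an open set $U$, the infimum runs over all sequences $u_n \in \mathrm{Lip}_{loc}(U)$ with $u_n \to \1_\Omega$ in $L^1(U,\nu)$, whereas in Definition \ref{dfn:innerperimeter} the competitors $\psi_n$ must additionally satisfy $\psi_n = 0$ on $U \setminus \Omega$. Thus the admissible class for $P_+(\Omega,U)$ is a subclass of the one for $|D\1_\Omega|_\nu(U)$, and the infimum over a smaller class is larger, giving $P_+(\Omega,U) \geq |D\1_\Omega|_\nu(U)$ for all open $U$. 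For a general Borel set $A$, every open $U \supset A$ satisfies $P_+(\Omega,U) \geq |D\1_\Omega|_\nu(U) \geq |D\1_\Omega|_\nu(A)$ by monotonicity, so taking the infimum over such $U$ in the definition $P_+(\Omega,A) = \inf\{P_+(\Omega,U) : U \text{ open},\, A \subset U\}$ yields $P_+(\Omega,A) \geq |D\1_\Omega|_\nu(A)$.

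For the reverse inequality on $\mathbb{X}$ I would exploit the explicit defining sequence. Since $\Omega$ is a regular domain, each $\varphi_t^\Omega$ lies in $\mathrm{Lip}_b(\mathbb{X}) \subset \mathrm{Lip}_{loc}(\mathbb{X})$, vanishes on $\Omega^c$, and satisfies $0 \leq \varphi_t^\Omega \leq 1$ with $\varphi_t^\Omega \to \1_\Omega$ pointwise as $t \to 0$; as $\Omega$ is bounded (so $\nu(\Omega) < \infty$), dominated convergence gives $\varphi_t^\Omega \to \1_\Omega$ in $L^1(\mathbb{X},\nu)$. Hence, choosing $t = 1/n$, the sequence $\varphi_{1/n}^\Omega$ is admissible for $P_+(\Omega,\mathbb{X})$, and since the slope $|\nabla \varphi_t^\Omega|$ is an upper gradient we may estimate the minimal weak upper gradient by it. Combining this with the mass convergence from Proposition \ref{prop:weakconvergence},
\begin{equation*}
P_+(\Omega,\mathbb{X}) \leq \liminf_{t \to 0} \int_{\mathbb{X}} |\nabla \varphi_t^\Omega| \, d\nu = \lim_{t \to 0} \int_{\mathbb{X}} |\nabla \varphi_t^\Omega| \, d\nu = |D\1_\Omega|_\nu(\mathbb{X}).
\end{equation*}
In particular $P_+(\Omega,\mathbb{X}) < \infty$, so $P_+(\Omega,\cdot)$ is a Radon measure and the final squeeze described above applies, proving $P_+(\Omega,A) = |D\1_\Omega|_\nu(A)$ for all Borel $A$.

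The structure of the argument is quite rigid, so the only genuinely non-formal inputs are Proposition \ref{prop:weakconvergence} and the verification that $\varphi_t^\Omega$ is an admissible competitor; the rest is soft. I expect the main point requiring care to be the measure-theoretic conclusion: one must confirm that the global bound $P_+(\Omega,\mathbb{X}) \leq |D\1_\Omega|_\nu(\mathbb{X})$ together with the pointwise Borel bound $P_+(\Omega,\cdot) \geq |D\1_\Omega|_\nu$ genuinely upgrades to equality, which hinges on $P_+(\Omega,\cdot)$ being a (finite, countably additive) measure rather than merely a monotone set function — a fact guaranteed by Definition \ref{dfn:innerperimeter} precisely because we have shown its total mass is finite. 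Notably, this route avoids needing any local upper bound for $P_+$, so the weak\nobreakdash-$*$ convergence in Proposition \ref{prop:weakconvergence} is not even required, only the convergence of masses.
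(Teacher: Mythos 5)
Your proposal is correct and follows essentially the same route as the paper: the inequality $P_+(\Omega,\cdot) \geq |D\1_\Omega|_\nu$ on Borel sets by comparison of admissible classes, equality of total masses via the defining sequence $\varphi_t^\Omega$ and Proposition \ref{prop:weakconvergence}, and a soft measure-theoretic squeeze to finish. The only cosmetic difference is the final step, which the paper phrases as a contradiction using additivity over $A$ and $\mathbb{X}\setminus A$ while you phrase it as the vanishing of the nonnegative difference measure---these are the same argument.
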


\begin{proof}
First, notice that by definition for any Borel set $A \subset \mathbb{X}$ we have $|D\1_\Omega|_\nu(A) \leq P_+(\Omega, A)$, because the infimum on the left hand side is taken over a larger set.

Since $\Omega$ is a regular domain, take its defining sequence $\varphi_{t}^\Omega$. Notice that it is admissible in the infimum in the definition of the inner perimeter and compute
$$  |D\1_\Omega|_\nu(\mathbb{X}) \leq P_+(\Omega, \mathbb{X}) \leq \int_\mathbb{X} |\nabla \varphi_{t}^\Omega| \, d\nu $$
and by passing to the limit with $t \rightarrow 0$ we see that the first inequality is in fact an equality, i.e. $ |D\1_\Omega|_\nu(\mathbb{X}) = P_+(\Omega, \mathbb{X})$. Now, suppose that for some Borel set $A \subset \mathbb{X}$ we have $ |D\1_\Omega|_\nu(A) \neq P_+(\Omega, A)$. Then
$$
|D\1_\Omega|_\nu(\mathbb{X}) =  |D\1_\Omega|_\nu (A) + |D\1_\Omega|_\nu(\mathbb{X} \setminus A) $$ $$ < P_+(\Omega, A) + P_+(\Omega, \mathbb{X} \backslash A) = P_+(\Omega,\mathbb{X}) = |D\1_\Omega|_\nu(\mathbb{X}),
$$
a contradiction. Hence, the measures  $|D\1_\Omega|_\nu$ and $P_+(\Omega,\cdot)$ coincide.
\end{proof}

The reason why we will require our domain to be regular in the course of the paper is  to ensure the validity of a Gauss-Green theorem. Such a result on regular domains in metric measure spaces was proved in \cite[Theorem 4.13]{BCM}; here, we adapt this reasoning to prove its variant required for the proof of Theorem \ref{thm:generalgreensformula}, which is a generalisation of the Gauss-Green formula to BV functions. The difference is that we require that  $X \in L^\infty(T\Omega)$ instead of  $X \in L^\infty(T\mathbb{X})$, instead assuming a bit more on the domain to be able to use the first order differential structure as discussed in Remark \ref{rem:extensiontoclosure}. Because of this, we also need to use the correct notion of the divergence. Note that since the objects in  $L^\infty(T\Omega)$ are (a priori) not defined locally and only via duality, it is not immediately clear how to extend them to  $L^\infty(T\mathbb{X})$; we will instead prove the result directly and instead at some point use extensions of Lipschitz functions.

\begin{theorem}\label{thm:bufacomimiranda}
Suppose that $\Omega \subset \mathbb{X}$ is a regular domain and that  $X \in L^\infty(T\Omega)$ with  $\mbox{div}_0(X) \in L^1(\Omega,\nu)$. Then, there exists a function $(X \cdot \nu_\Omega)^- \in L^\infty(\partial\Omega, |D\1_{\Omega}|_\nu)$ such that
\begin{equation}
\int_\Omega f \, \mbox{div}_0(X) \, d\nu + \int_\Omega df(X) \, d\nu = - \int_{\partial\Omega} f (X \cdot \nu_\Omega)^- \, d | D\1_{\Omega} |_\nu
\end{equation}
for every $f\in \mbox{Lip}(\overline{\Omega})$. Moreover, we have the following estimate:
\begin{equation}
\| (X \cdot \nu_\Omega)^- \|_{L^\infty(\partial\Omega, |D\1_{\Omega}|_\nu)} \leq \| \vert X \vert \|_{L^\infty(\Omega, \nu)}.
\end{equation}
\end{theorem}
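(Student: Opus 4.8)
The plan is to construct the normal trace by testing $\mbox{div}_0(X)$ against the defining sequence $\varphi_t^\Omega$ of $\Omega$. For $f \in \mbox{Lip}(\overline\Omega)$ and $t > 0$, the product $\varphi_t^\Omega f$ is Lipschitz, bounded, and vanishes on $\partial\Omega$ (since $\varphi_t^\Omega = 0$ there), hence has zero trace and belongs to $W^{1,p}_0(\Omega,d,\nu) \cap L^\infty(\Omega,\nu)$. It is therefore admissible as a test function in the definition of $\mbox{div}_0(X)$ in the case $q = \infty$, $r = 1$, $s = \infty$. Applying the Leibniz rule for the differential, $d(\varphi_t^\Omega f) = \varphi_t^\Omega\, df + f\, d\varphi_t^\Omega$, the defining identity rearranges to
\begin{equation*}
\int_\Omega \varphi_t^\Omega f\, \mbox{div}_0(X)\, d\nu + \int_\Omega \varphi_t^\Omega\, df(X)\, d\nu = -\int_\Omega f\, d\varphi_t^\Omega(X)\, d\nu =: L_t(f).
\end{equation*}

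Next I would pass to the limit $t \to 0$. On the left-hand side, since $\varphi_t^\Omega \to 1$ pointwise on $\Omega$ with $0 \le \varphi_t^\Omega \le 1$, and since $f\,\mbox{div}_0(X) \in L^1(\Omega,\nu)$ (because $f$ is bounded and $\mbox{div}_0(X) \in L^1$) while $df(X) \in L^\infty(\Omega,\nu) \subset L^1(\Omega,\nu)$ (because $|df(X)| \le |\nabla f|\,|X|$ is bounded on the bounded set $\Omega$), dominated convergence gives
\begin{equation*}
\lim_{t \to 0} L_t(f) = \int_\Omega f\, \mbox{div}_0(X)\, d\nu + \int_\Omega df(X)\, d\nu,
\end{equation*}
which is exactly the left-hand side of the claimed formula. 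It remains to represent this limit as a boundary integral. To this end I consider the signed measures $\mu_t := -d\varphi_t^\Omega(X)\,\nu$ on $\overline\Omega$, so that $L_t(f) = \int_{\overline\Omega} f\, d\mu_t$. By the pointwise bound $|d\varphi_t^\Omega(X)| \le |\nabla\varphi_t^\Omega|\,|X|$ together with Proposition~\ref{prop:weakconvergence}, their total variations are uniformly bounded by $\|\vert X\vert\|_{L^\infty(\Omega,\nu)}\, |D\1_\Omega|_\nu(\mathbb{X})$. Since $L_t(f)$ converges for every Lipschitz $f$ and Lipschitz functions are dense in $C(\overline\Omega)$ (note $\overline\Omega$ is compact), the $\mu_t$ converge weakly$^*$ to a finite signed measure $\mu$ on $\overline\Omega$ satisfying $\int_{\overline\Omega} f\, d\mu = \lim_t L_t(f)$.

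Finally I would identify $\mu$. For $g \in C(\overline\Omega)$ with $\mathrm{supp}(g) \subset \Omega_\delta$, one has $\varphi_t^\Omega = 1$ on $\mathrm{supp}(g)$ for all $t \le \delta$, so $d\varphi_t^\Omega(X) = 0$ there and $\mu_t(g) = 0$ eventually; hence $\mu(g) = 0$, showing $\mu$ is concentrated on $\partial\Omega$. For the decisive domination estimate, the bound $|\mu_t(g)| \le \|\vert X\vert\|_{L^\infty(\Omega,\nu)} \int_\Omega |g|\,|\nabla\varphi_t^\Omega|\,d\nu$, combined with the weak$^*$ convergence $|\nabla\varphi_t^\Omega|\,\nu \rightharpoonup |D\1_\Omega|_\nu$ from Proposition~\ref{prop:weakconvergence} applied to the continuous function $|g|$, yields $|\mu(g)| \le \|\vert X\vert\|_{L^\infty(\Omega,\nu)} \int_{\partial\Omega} |g|\, d|D\1_\Omega|_\nu$ for every $g \in C(\overline\Omega)$. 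This forces $\mu \ll |D\1_\Omega|_\nu$ with Radon--Nikodym density bounded in absolute value by $\|\vert X\vert\|_{L^\infty(\Omega,\nu)}$, so setting $(X\cdot\nu_\Omega)^- := -\,d\mu/d|D\1_\Omega|_\nu$ produces both the Gauss--Green identity and the norm estimate. I expect the main obstacle to be precisely this last step: upgrading the weak$^*$ limit from a generic finite measure to one absolutely continuous with respect to $|D\1_\Omega|_\nu$ with an $L^\infty$ density. The subtlety is that the pointwise bound on $d\varphi_t^\Omega(X)$ and the sharp mass convergence of Proposition~\ref{prop:weakconvergence} cannot be invoked separately but must be coupled, since the upper semicontinuity needed here runs opposite to the lower semicontinuity one gets for free from weak$^*$ convergence.
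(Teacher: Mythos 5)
Your proposal is correct, and its core coincides with the paper's proof: both test the definition of $\mbox{div}_0(X)$ against $f\varphi_t^\Omega$ (Leibniz rule plus the zero-trace property), pass to the limit by dominated convergence, and then control the boundary term by coupling the pointwise bound $|d\varphi_t^\Omega(X)| \leq |X|\,|\nabla \varphi_t^\Omega|$ with \emph{both} parts of Proposition \ref{prop:weakconvergence} (weak$^*$ convergence together with convergence of the masses). The coupling you single out at the end as the main obstacle is exactly how the paper argues, so you identified and resolved the right crux. The only divergence is the final representation step. The paper uses the estimate $|T_X(f)| \leq \| |X| \|_{L^\infty(\Omega,\nu)} \| f \|_{L^1(\partial\Omega, |D\1_\Omega|_\nu)}$ to see that the limit functional depends only on $f|_{\partial\Omega}$, extends it by density of $\mathrm{Lip}(\partial\Omega)$ in $L^1(\partial\Omega,|D\1_\Omega|_\nu)$, and obtains $(X \cdot \nu_\Omega)^-$ from the duality $(L^1)^* = L^\infty$; you instead realize the functionals as measures $\mu_t$ on the compact set $\overline{\Omega}$, take a weak$^*$ limit in $\mathcal{M}(\overline{\Omega})$ (using density of Lipschitz functions in $C(\overline{\Omega})$ and Riesz--Markov), and finish with the domination $|\mu| \leq \| |X| \|_\infty |D\1_\Omega|_\nu$ plus Radon--Nikodym. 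The two finishes are equivalent; yours trades the density of $\mathrm{Lip}(\partial\Omega)$ in $L^1$ of the boundary for a measure-theoretic compactness argument, and note that your localization step (testing with $g$ supported in $\Omega_\delta$) is in fact redundant, since the domination estimate alone forces $\mu$ to be concentrated where $|D\1_\Omega|_\nu$ lives, i.e.\ on $\partial\Omega$. Two small points to make explicit in a write-up: the vanishing $d\varphi_t^\Omega(X) = 0$ $\nu$-a.e.\ on $\Omega_\delta$ rests on locality of minimal upper gradients ($|d\varphi_t^\Omega|_* = |D\varphi_t^\Omega| = 0$ a.e.\ on open sets where $\varphi_t^\Omega$ is constant), and testing the convergence $|\nabla \varphi_t^\Omega|\, d\nu \rightharpoonup |D\1_\Omega|_\nu$ against the merely continuous bounded function $|g|$ requires upgrading weak$^*$ convergence to narrow convergence via the mass convergence, which is legitimate here because all the measures involved are supported in the compact set $\overline{\Omega}$.
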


The object $(X \cdot \nu_\Omega)^-$ constructed in the above theorem is called the {\it interior normal trace} of $X$ on $\partial\Omega$. This Theorem can be understood as a relationship between the two definitions of the divergence $\mbox{div}(X)$ and $\mbox{div}_0(X)$; for every Lipschitz function $f \in \mbox{Lip}(\overline{\Omega})$, we have
\begin{equation*}
\int_{\overline{\Omega}} f \, d(\mbox{div}(X)) = - \int_\Omega df(X) \, d\nu = \int_\Omega f \, \mbox{div}_0(X) \, d\nu + \int_{\partial\Omega} f (X \cdot \nu_\Omega)^- \, d | D\1_{\Omega} |_\nu,
\end{equation*}
so the divergence $\mbox{div}(X)$ agrees with $\mbox{div}_0(X)$ inside $\Omega$ and also has a boundary term which includes the interior normal trace; in particular, whenever $X \in \mathcal{D}_0^q(\Omega)$, it typically lies does not lie in $\mathcal{D}^q(\Omega)$, but it lies in $\mathcal{DM}(\overline{\Omega})$.  Also, when the metric measure space is $(\mathbb{R}^N,d_{Eucl}, \mathcal{L}^N)$, then because the abstract vector fields and differentials coincide with their standard counterparts defined in coordinates, the interior normal trace also does.

\begin{proof}
Let $\varphi_t^\Omega$ be the defining sequence of $\Omega$. Since $f \varphi_t^\Omega \in \mathrm{Lip}(\overline{\Omega})$ and $f \varphi_t^\Omega $ has zero trace, by equation \eqref{eq:leibnizformulav2}
\begin{equation*}
\int_\Omega f \, d\varphi_t^\Omega(X) \, d\nu = \int_\Omega d(f \varphi_t^\Omega)(X) \, d\nu - \int_\Omega \varphi_t^\Omega \, df(X) \, d\nu \qquad\qquad\qquad\qquad\qquad\qquad
\end{equation*}
\begin{equation}\label{eq:firstlineofproofofgreen}
\qquad\qquad\qquad\qquad\qquad \qquad\qquad = - \int_\Omega f \, \varphi_t^\Omega \, \mathrm{div}_0(X) \, d\nu - \int_\Omega \varphi_t^\Omega \, df(X) \, d\nu.
\end{equation}
Since $\varphi_t^\Omega \rightarrow \1_\Omega$ a.e., by the dominated convergence theorem we may pass to the limit in both sides of the above equation and get
\begin{equation}\label{eq:secondlineofproofofgreen}
\lim_{t \rightarrow 0} \int_\Omega f \, d\varphi_t^\Omega(X) \, d\nu = - \int_\Omega f \,  \mathrm{div}_0(X) \, d\nu - \int_\Omega df(X) \, d\nu.
\end{equation}
Now, for any $f \in \mathrm{Lip}(\overline{\Omega})$ let us define the distribution
\begin{equation*}
T_X(f) = \lim_{t \rightarrow 0} T_X^t(f), \quad \mbox{where} \quad T_X^t(f) = \int_\Omega f d\varphi_t^\Omega(X) \, d\nu.
\end{equation*}
By equation \eqref{eq:firstlineofproofofgreen}, we get that
\begin{equation*}
|T_X^t(f)| \leq \| \vert X \vert \|_{L^\infty(\Omega,\nu)} \int_\Omega |f| |\nabla \varphi_t^\Omega| \, d\nu = \| \vert X \vert \|_{L^\infty(\Omega,\nu)} \int_{\mathbb{X}} |\overline{f}| |\nabla \varphi_t^\Omega| \, d\nu,
\end{equation*}
where $\overline{f}$ is any Lipschitz extension of $f$ to the whole space $\mathbb{X}$. By Proposition \ref{prop:weakconvergence}, we get
\begin{equation*}
|T_X(f)| \leq \| \vert X \vert \|_{L^\infty(\Omega,\nu)} \int_{\mathbb{X}} |\overline{f}|  \,  d|D\1_\Omega|_\nu = \| \vert X \vert \|_{L^\infty(\Omega,\nu)} \int_{\partial\Omega} |f|  \, d|D\1_\Omega|_\nu
\end{equation*}
\begin{equation*}
= \| \vert X \vert \|_{L^\infty(\Omega,\nu)} \| f \|_{L^1(\partial\Omega, |D\1_\Omega|)}.
\end{equation*}
Since $\mathrm{Lip}(\partial\Omega)$ is dense in $L^1(\partial\Omega, |D\1_\Omega|)$, the functional $T_X$ may be extended to a continuous functional on $L^1(\partial\Omega, |D\1_\Omega|)$; by the Riesz representation theorem (note that under the assumptions of Theorem \ref{thm:traces} the space $X$ is proper, see \cite[Proposition 3.1]{BB}) there exists a function $(X \cdot \nu_\Omega)^- \in L^\infty(\partial\Omega, |D\1_\Omega|_\nu)$ with norm at most $\| \vert X \vert \|_{L^\infty(\Omega,\nu)}$ such that
\begin{equation*}
T_X(f) = \int_{\partial\Omega} f \, (X \cdot \nu_\Omega)^- \,  d|D\1_\Omega|_\nu.
\end{equation*}
By equation \eqref{eq:secondlineofproofofgreen} we get the result.
\end{proof}

In the course of the paper, we will often require that the domain is regular and that it satisfies the assumptions of Theorem \ref{thm:traces}. Both types of conditions are regularity assumptions on the domain, and even though there are some similarities between the two conditions, the relationship between them is not straightforward. In short, regular domains may have interior or exterior cusps, but they may not have interior slits. This is discussed in the following Example.

\begin{example}\label{ex:regularnottraces}{\rm
Let $(\mathbb{R}^2,d,\mathcal{L}^2)$ be the standard two-dimensional Euclidean space. Consider the following three examples: \vspace{1mm} \\
(1) Let $\Omega \subset \mathbb{R}^2$ be set constructed as follows: we take a ball and add a single exterior cusp with the tip $x_0$, so that the resulting set is $C^\infty$ away from the tip. Then,
\begin{equation*}
\frac{\mathcal{L}^2(\Omega \setminus \Omega_t)}{t} = \frac{\mathcal{L}^2(\Omega \setminus (\Omega_t \cup B(x_0,2t)))}{t} + \frac{\mathcal{L}^2((\Omega \cap B(x_0,2t)) \setminus \Omega_t)}{t}.
\end{equation*}
In the first summand, since we are away from the tip, we have
\begin{equation*}
\lim_{t \rightarrow 0} \frac{\mathcal{L}^2(\Omega \setminus (\Omega_t \cup B(x_0,2t)))}{t} = \mathcal{H}^1(\partial\Omega).
\end{equation*}
In the second summand, we have
\begin{equation*}
\frac{\mathcal{L}^2((\Omega \cap B(x_0,2t)) \setminus \Omega_t)}{t} \leq \frac{\mathcal{L}^2(B(x_0,2t))}{t} \leq 4\pi t,
\end{equation*}
so it goes to zero as $t \rightarrow 0$. Hence, $\Omega$ is a regular domain. However, the measure density condition fails in the neighbourhood of $x_0$. On the other hand, the Ahlfors codimension one condition is satisfied and the weak $(1,1)-$Poincar\'e inequality holds. \vspace{1mm} \\
 (2) Let $\Omega \subset \mathbb{R}^2$ be the set considered in \cite{MSS}, i.e.
\begin{equation*}
\Omega = [-2,2]^2 \setminus \{ (x,y): \, -1 \leq x \leq 1, \, |x| \leq |y| \leq 1 \}.
\end{equation*}
Then, it is clear that $\Omega$ is a regular domain (it has Lipschitz boundary except for a single point). However, it does not satisfy the weak $(1,1)$-Poincar\'e inequality, as one can see by taking balls with small radii $r > 0$ and center $(-\frac{r}{2},0)$. On the other hand, $\Omega$ satisfies the measure density condition and the Ahlfors codimension one condition. \vspace{1mm} \\
(3) Let $\Omega \subset \mathbb{R}^2$ be the slit disk, i.e.
\begin{equation*}
\Omega = B(0,1) \setminus \{ (x,y): \, x \geq 0, \, y = 0 \}.
\end{equation*}
Then, $\Omega$ satisfies the measure density condition and the Ahlfors codimension one condition. However, it does not support a weak $(1,1)$-Poincar\'e inequality, as one can see by taking balls with small radii $r > 0$ and center $(\frac{1}{2},\frac{r}{2})$. Also, it is not a regular domain: notice that
\begin{equation*}
\lim_{r \rightarrow 0} \frac{\mathcal{L}^2(\Omega \backslash \Omega_t)}{t} = \mathcal{H}^1(\partial B(0,1)) + 2 \mathcal{H}^1(\{ (x,0): x \geq 0 \}) = 2\pi + 2,
\end{equation*}
but since $\Omega$ is the unit ball up to a set of zero Lebesgue measure, its perimeter measure equals $\mathcal{H}^1(\partial B (0,1)) = 2\pi$.
$\blacksquare$}
\end{example}

\subsection{Introducing the pairing}
	
In order to characterise the solutions to the least gradient problem, we will work with a metric analogue of the Anzellotti pairings between a vector field with integrable divergence and a BV function introduced in \cite{Anz} (see also \cite{KT}). Such an object, defined in terms of the differential structure due to Gigli, was constructed on the whole space $(\mathbb{X},d,\nu)$ in \cite{GM2021}. The most important property of this pairing is that it satisfies a Gauss-Green formula. The main goal of this subsection is to adapt this definition to the case when $\Omega \subset \mathbb{X}$ is an open bounded sufficiently regular set, and prove that the constructed pairing satisfies a version of the Gauss-Green formula which takes into account the boundary effects.

Suppose that $\Omega \subset \mathbb{X}$ satisfies the assumptions from the beginning of this Section. Assume that $X \in L^\infty(T\Omega)$ and $u \in BV(\Omega, d, \nu)$. As in the case of classical Anzellotti pairings, we will additionally assume  a joint regularity condition on $u$ and $X$ which makes the pairing well-defined. The condition is as follows: for $p \in [1,\infty)$, we have
\begin{equation}\label{Anzellotti:assumption}
 \mbox{div}_0(X) \in L^p(\Omega,\nu), \quad u \in BV(\Omega,d, \nu) \cap L^{q}(\Omega,\nu), \quad \frac{1}{p} + \frac{1}{q} = 1.
\end{equation}
 In other words, $X \in \mathcal{D}_0^{\infty,p}(\Omega)$. In the proofs, we will sometimes differentiate between the cases when $p > 1$ and $p = 1$.

\begin{definition}
Suppose that the pair $(X, u)$ satisfies the condition \eqref{Anzellotti:assumption}. Then, given a Lipschitz function  $f \in \mbox{Lip}(\Omega)$ with compact support, we set
$$ \langle (X, Du), f \rangle := -\int_\Omega u \,  \mbox{div}_0(fX) \, d\nu =  -\int_\Omega u \, df(X) \, d\nu - \int_\Omega u f \mbox{div}_0(X) \, d\nu.$$
\end{definition}

The following result was proved in \cite[Proposition 5.3]{GM2021}, with the proof being essentially the same on a bounded domain  with $\mbox{div}_0$ in place of $\mbox{div}$.

\begin{proposition}\label{prop:boundonAnzellottipairing}
$(X, Du)$ is a Radon measure which is absolutely continuous with respect to $|Du|_\nu$.  Moreover, for every Borel set $A \subset \Omega$ we have
$$ \int_A |(X,Du)| \leq \| \vert X \vert \|_\infty \int_A |Du|_\nu.$$
\end{proposition}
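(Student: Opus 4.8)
The plan is to show that the linear functional $f \mapsto \langle (X,Du),f\rangle$, a priori only a distribution on Lipschitz functions with compact support, is dominated by the finite measure $\| \vert X \vert \|_\infty \, |Du|_\nu$; the Riesz representation theorem then produces the Radon measure $(X,Du)$, and the domination yields at once both the absolute continuity with respect to $|Du|_\nu$ and the quantitative bound. The key is an integration by parts that trades the irregular factor $u$ for a smooth approximation whose differential can be estimated pointwise.

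First I would approximate. Apply Lemma \ref{lem:lipschitzapproximation} to the metric measure space $\Omega$ to obtain $u_n \in \mbox{Lip}(\Omega)\cap BV(\Omega,d,\nu)$ with $u_n \to u$ strictly in $BV(\Omega,d,\nu)$ and, by part $(2)$, $u_n \to u$ in $L^q(\Omega,\nu)$. Fix $f \in \mbox{Lip}(\Omega)$ with compact support. Then $fu_n$ is Lipschitz with compact support, hence lies in $W_0^{1,1}(\Omega,d,\nu)\cap L^q(\Omega,\nu)$, so it is admissible in the definition of $\mathcal{D}_0^{\infty,p}(\Omega)$ satisfied by $X$. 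Testing $\mbox{div}_0(X)$ against $fu_n$ and applying the Leibniz rule $d(fu_n)=u_n\,df+f\,du_n$ gives
\[ -\int_\Omega u_n\, df(X)\,d\nu-\int_\Omega u_n f\,\mbox{div}_0(X)\,d\nu=\int_\Omega f\, du_n(X)\,d\nu. \]
The left-hand side is exactly the pairing expression with $u_n$ in place of $u$, and since $|du_n(X)|\leq |du_n|_*\,|X|=|\nabla u_n|\,|X|$ $\nu$-a.e. (using $|du_n|_*=|\nabla u_n|$, see Remark \ref{curvature}), I obtain
\[ \Big|{-}\int_\Omega u_n\,df(X)\,d\nu-\int_\Omega u_n f\,\mbox{div}_0(X)\,d\nu\Big|\leq \| \vert X \vert \|_\infty\int_\Omega |f|\,|\nabla u_n|\,d\nu. \]

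Next I would pass to the limit $n\to\infty$. On the left-hand side, $df(X)\in L^\infty(\Omega,\nu)$ has compact support and $f\,\mbox{div}_0(X)\in L^p(\Omega,\nu)$, so pairing with $u_n\to u$ in $L^q(\Omega,\nu)$ and using H\"older's inequality shows convergence to $-\int_\Omega u\,df(X)\,d\nu-\int_\Omega uf\,\mbox{div}_0(X)\,d\nu=\langle (X,Du),f\rangle$. On the right-hand side, strict convergence forces the weak* convergence $|\nabla u_n|\,d\nu\rightharpoonup |Du|_\nu$ in $\mathcal{M}(\Omega)$ (lower semicontinuity of the total variation on open sets together with convergence of the total masses), so testing against the compactly supported continuous function $|f|$ yields $\int_\Omega |f|\,|\nabla u_n|\,d\nu\to\int_\Omega |f|\,d|Du|_\nu$. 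Combining these limits gives
\[ |\langle (X,Du),f\rangle|\leq \| \vert X \vert \|_\infty\int_\Omega |f|\,d|Du|_\nu \qquad\text{for all } f\in\mbox{Lip}(\Omega)\text{ with compact support}. \]

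Finally, since compactly supported Lipschitz functions are dense in $C_c(\Omega)$, this estimate extends $(X,Du)$ to a bounded linear functional on $C_c(\Omega)$, which by the Riesz representation theorem is represented by a signed Radon measure; applying the estimate through $|(X,Du)|(U)=\sup\{\langle (X,Du),f\rangle:f\in C_c(U),\ |f|\leq 1\}$ for open $U\subset\Omega$ gives $|(X,Du)|(U)\leq \| \vert X \vert \|_\infty\,|Du|_\nu(U)$, and outer regularity propagates this to every Borel set $A\subset\Omega$, establishing both the absolute continuity and the displayed inequality. I expect the main obstacle to be the simultaneous control of both sides in the limit: the right-hand side requires only the strict convergence of $u_n$, but the passage on the left-hand side genuinely needs the $L^q$-convergence supplied by part $(2)$ of Lemma \ref{lem:lipschitzapproximation}, so the refined approximation result is what makes the argument go through.
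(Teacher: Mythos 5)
Your argument is correct in substance and follows essentially the same route as the paper's proof, which is not written out there but delegated to \cite[Proposition 5.3]{GM2021} (and proceeds exactly this way): test $\mbox{div}_0(X)$ against $f u_n$ for Lipschitz approximations $u_n$ of $u$, bound $\int_\Omega f\, du_n(X)\, d\nu$ by $\| \vert X \vert \|_\infty \int_\Omega |f|\, |\nabla u_n|\, d\nu$ via the pointwise duality estimate, pass to the limit using strict convergence (your portmanteau justification --- lower semicontinuity on open subsets plus convergence of total masses --- is the right one), and conclude by Riesz representation and outer regularity.

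One point needs repair: condition \eqref{Anzellotti:assumption} permits $p=1$, hence $q=\infty$, and in that case your appeal to part $(2)$ of Lemma \ref{lem:lipschitzapproximation} fails --- part $(2)$ is stated only for finite exponents, and indeed uniform approximation of a general $BV\cap L^\infty$ function by Lipschitz functions is impossible, so there is no strong $L^\infty$ convergence to be had. The remedy is part $(3)$ of the same lemma: $u_n \rightharpoonup u$ weakly* in $L^\infty(\Omega,\nu)$, which still yields the limit on the left-hand side, because both $df(X)$ and $f\,\mbox{div}_0(X)$ lie in $L^1(\Omega,\nu)$ (the former is bounded and $\nu(\Omega)<\infty$ since $\Omega$ is bounded and $\nu$ doubling; the latter holds by assumption when $p=1$). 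This is precisely the case distinction the paper announces right after \eqref{Anzellotti:assumption} (``we will sometimes differentiate between the cases when $p > 1$ and $p = 1$''). The right-hand side of your estimate is unaffected by this substitution, so with that single change the proof covers the full range of exponents and is complete.
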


Before we prove the Gauss-Green formula, we require one more technical result,  see \cite[Lemma 5.4]{GM2021} (again, the proof on a bounded domain stays the same).
		
\begin{lemma}\label{lem:continuityofXDu}
Suppose that $u_i \rightarrow u$ as in the statement of Lemma \ref{lem:lipschitzapproximation}. Assume that the pair $(X,u)$ satisfies  the condition \eqref{Anzellotti:assumption}. Then
$$ \int_{\Omega} (X, Du_i) \rightarrow \int_\Omega (X,Du).$$
\end{lemma}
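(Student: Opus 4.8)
The plan is to combine three ingredients: the distributional definition of the pairing, the uniform absolute-continuity bound of Proposition \ref{prop:boundonAnzellottipairing}, and the strict convergence $u_i \to u$, which is what prevents mass of the pairing measures from escaping to $\partial\Omega$. First I would check that each pair $(X,u_i)$ satisfies \eqref{Anzellotti:assumption}: indeed $\mathrm{div}_0(X) \in L^p(\Omega,\nu)$ by hypothesis, while Lemma \ref{lem:lipschitzapproximation} gives $u_i \in BV(\Omega,d,\nu) \cap L^q(\Omega,\nu)$ (using point $(2)$ when $q < \infty$ and point $(3)$ when $q = \infty$). Hence each $(X,Du_i)$ is a finite Radon measure on $\Omega$ with $|(X,Du_i)|(A) \leq \| \vert X \vert \|_\infty \, |Du_i|_\nu(A)$ for every Borel $A \subset \Omega$, so the integrals in the statement are well defined.

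The second step is to establish weak$^*$ convergence tested against compactly supported Lipschitz functions. Fix $f \in \mbox{Lip}(\Omega)$ with compact support. By the definition of the pairing,
$$ \int_\Omega f \, d(X,Du_i) = -\int_\Omega u_i \, df(X) \, d\nu - \int_\Omega u_i f \, \mathrm{div}_0(X) \, d\nu. $$
Since $f$ has compact support and $X \in L^\infty(T\Omega)$, the function $df(X)$ lies in $L^\infty$ with compact support, hence in $L^p(\Omega,\nu)$; likewise $f \, \mathrm{div}_0(X) \in L^p(\Omega,\nu)$. When $p>1$ I would use $u_i \to u$ in $L^q(\Omega,\nu)$ (point $(2)$), and when $p=1$ I would use $u_i \rightharpoonup u$ weakly$^*$ in $L^\infty(\Omega,\nu)$ (point $(3)$) together with $df(X), f\,\mathrm{div}_0(X) \in L^1(\Omega,\nu)$; in either case both integrals pass to the limit, so $\int_\Omega f \, d(X,Du_i) \to \int_\Omega f \, d(X,Du)$.

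The third and decisive step is to upgrade this to convergence of total masses by ruling out concentration at the boundary. Set $U_\delta = \{ x \in \Omega : \mathrm{dist}(x,\Omega^c) > \delta \}$. Lower semicontinuity of the total variation on the open set $U_\delta$ gives $\liminf_i |Du_i|_\nu(U_\delta) \geq |Du|_\nu(U_\delta)$, which together with the strict convergence $|Du_i|_\nu(\Omega) \to |Du|_\nu(\Omega)$ yields
$$ \limsup_i |Du_i|_\nu(\Omega \setminus U_\delta) \leq |Du|_\nu(\Omega \setminus U_\delta), $$
and the right-hand side tends to $0$ as $\delta \to 0$ because $|Du|_\nu$ is finite and $\bigcap_\delta (\Omega \setminus U_\delta) = \emptyset$. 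Next I would pick cutoffs $f_\delta \in \mbox{Lip}(\Omega)$ with compact support (using that $\overline{U_\delta}$ is compactly contained in $\Omega$, as $\mathbb{X}$ is proper), with $0 \leq f_\delta \leq 1$ and $f_\delta \equiv 1$ on $U_\delta$. Since $1-f_\delta$ is supported in $\Omega \setminus U_\delta$, the bound from Proposition \ref{prop:boundonAnzellottipairing} gives
$$ \left| \int_\Omega (X,Du_i) - \int_\Omega f_\delta \, d(X,Du_i) \right| \leq \| \vert X \vert \|_\infty \, |Du_i|_\nu(\Omega \setminus U_\delta), $$
and the analogous estimate holds for the limit measure $(X,Du)$. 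A triangle inequality splitting $\int_\Omega(X,Du_i)-\int_\Omega(X,Du)$ through $\int_\Omega f_\delta\,d(X,Du_i)$ and $\int_\Omega f_\delta\,d(X,Du)$, then letting $i \to \infty$ (using the second step for the fixed function $f_\delta$) and finally $\delta \to 0$, yields the claim. The main obstacle is precisely this last step: weak$^*$ convergence against $\mbox{Lip}(\Omega)$-functions with compact support does not by itself control the total mass, and it is the strict convergence of $u_i$ — through convergence of the total variations and lower semicontinuity — that supplies the tightness needed to prevent mass from escaping to $\partial\Omega$.
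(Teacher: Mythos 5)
Your proof is correct and follows essentially the same route as the paper's (which defers to \cite[Lemma 5.4]{GM2021}): convergence against compactly supported Lipschitz test functions read off directly from the definition of the pairing, then upgraded to convergence of total masses via the absolute-continuity bound of Proposition \ref{prop:boundonAnzellottipairing} combined with the tightness near $\partial\Omega$ that strict convergence provides. Your explicit cutoff argument with $U_\delta$ and $f_\delta$ is precisely the boundary-localisation step that the paper summarises by saying the whole-space proof ``stays the same'' on a bounded domain.
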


Now, we prove the main result of this Section, which is our main reason to consider the metric analogue of the Anzellotti pairings. Namely, we show that the Gauss-Green formula given in Theorem \ref{thm:bufacomimiranda} can be extended to the setting of BV functions in place of Lipschitz functions.
	
\begin{theorem}\label{thm:generalgreensformula}
Under the assumptions of Theorem \ref{thm:traces}, let $\Omega \subset \mathbb{X}$ be a regular domain and suppose that the pair $(X,u)$ satisfies  the condition \eqref{Anzellotti:assumption}. Then
$$ \int_\Omega u \,   \mbox{div}_0(X) \, d\nu  + \int_\Omega (X,Du) =  - \int_{\partial\Omega}  T_\Omega u \, (X \cdot \nu_\Omega)^- \, d| D_{\1_{\Omega}} |_\nu.$$
\end{theorem}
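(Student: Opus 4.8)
The plan is to prove the Gauss-Green formula by combining the Lipschitz-level formula from Theorem~\ref{thm:bufacomimiranda} with the trace-preserving strict approximation from Lemma~\ref{lem:goodapproximation}, passing to the limit in each of the three terms. First I would invoke Lemma~\ref{lem:goodapproximation} to obtain a sequence $u_n \in \mathrm{Lip}_{loc}(\Omega) \cap BV(\Omega,d,\nu)$ with $u_n \to u$ strictly in $BV(\Omega,d,\nu)$, with $T_\Omega u_n = T_\Omega u$ $\mathcal{H}$-a.e., and (crucially, using the joint regularity condition \eqref{Anzellotti:assumption}) with $u_n \to u$ in $L^q(\Omega,\nu)$ when $q < \infty$, or $u_n \rightharpoonup u$ weakly* in $L^\infty(\Omega,\nu)$ when $q = \infty$. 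The subtlety is that the $u_n$ are only \emph{locally} Lipschitz, so Theorem~\ref{thm:bufacomimiranda} does not apply to them directly; I would circumvent this by working with the functions $\varphi_t^\Omega u_n$, which vanish near $\partial\Omega$ and hence have zero trace, so that one may integrate by parts using $\mathrm{div}_0$ with only functions in $W_0^{1,p}$, and pass to the limit $t \to 0$ as in the proof of Theorem~\ref{thm:bufacomimiranda}. This yields, for each fixed $n$,
\begin{equation*}
\int_\Omega u_n \, \mathrm{div}_0(X) \, d\nu + \int_\Omega (X, Du_n) = -\int_{\partial\Omega} T_\Omega u_n \, (X \cdot \nu_\Omega)^- \, d|D\1_\Omega|_\nu.
\end{equation*}

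Next I would pass to the limit $n \to \infty$ term by term. The right-hand side is immediate: since $T_\Omega u_n = T_\Omega u$ $\mathcal{H}$-a.e.\ and the measures $\mathcal{H}$ and $|D\1_\Omega|_\nu$ are equivalent on $\partial\Omega$ by \eqref{Ammb}, the boundary integral is in fact \emph{constant} in $n$ and equals the desired right-hand side. For the first term, $\int_\Omega u_n \, \mathrm{div}_0(X) \, d\nu \to \int_\Omega u \, \mathrm{div}_0(X) \, d\nu$ follows from the convergence $u_n \to u$: when $p > 1$ we have $\mathrm{div}_0(X) \in L^p(\Omega,\nu)$ and $u_n \to u$ in $L^q(\Omega,\nu)$, so this is H\"older duality; when $p = 1$ we have $\mathrm{div}_0(X) \in L^1(\Omega,\nu)$ and $u_n \rightharpoonup u$ weakly* in $L^\infty(\Omega,\nu)$, which pairs against the $L^1$ function $\mathrm{div}_0(X)$. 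For the middle term, the convergence $\int_\Omega (X, Du_n) \to \int_\Omega (X, Du)$ is precisely the content of Lemma~\ref{lem:continuityofXDu}, applied along the strictly convergent approximating sequence.

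\textbf{The main obstacle} is ensuring that the approximating sequence simultaneously satisfies all the properties needed for the three limits to go through: it must preserve the trace (to handle the boundary term), converge strictly in $BV$ (to apply Lemma~\ref{lem:continuityofXDu} to the pairing term), and converge in the correct $L^q$ topology matching the integrability of $\mathrm{div}_0(X)$ (to handle the interior term). This is exactly why the refined Lemma~\ref{lem:goodapproximation}, rather than the plain strict approximation of Definition~\ref{dfn:totalvariationonmetricspaces} or Lemma~\ref{lem:lipschitzapproximation}, is indispensable: points~$(2)$ and $(3)$ of that Lemma were added precisely to provide the $L^q$ and weak* control, while point~$(1)$ provides strict convergence and trace preservation together. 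A secondary technical point to handle carefully is the local-Lipschitz-versus-Lipschitz issue in applying the integration-by-parts identity; since $u_n$ is only locally Lipschitz, one cannot simply substitute it into Theorem~\ref{thm:bufacomimiranda}, and the cutoff by $\varphi_t^\Omega$ (together with the observation that $\varphi_t^\Omega u_n$ has zero trace and is an admissible test function for $\mathrm{div}_0$) must be justified before taking $t \to 0$. Once the approximation is in place, the three limits are essentially routine continuity arguments.
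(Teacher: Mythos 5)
Your overall strategy (approximate, use a Lipschitz-level formula, pass to the limit term by term) is the same as the paper's, and your treatment of the two interior terms matches the paper's proof. However, there is a genuine gap at the step you call routine: the fixed-$n$ identity
\begin{equation*}
\int_\Omega u_n\,\mbox{div}_0(X)\,d\nu + \int_\Omega (X,Du_n) = -\int_{\partial\Omega} T_\Omega u_n\,(X\cdot\nu_\Omega)^-\,d|D\1_\Omega|_\nu
\end{equation*}
for the \emph{locally} Lipschitz functions $u_n$ of Lemma \ref{lem:goodapproximation} cannot be obtained ``as in the proof of Theorem \ref{thm:bufacomimiranda}''. Your cutoff argument does yield
\begin{equation*}
\lim_{t\to 0}\int_\Omega u_n\,d\varphi_t^\Omega(X)\,d\nu = -\int_\Omega u_n\,\mbox{div}_0(X)\,d\nu - \int_\Omega du_n(X)\,d\nu,
\end{equation*}
but identifying this limit with the boundary integral of $T_\Omega u_n$ against $(X\cdot\nu_\Omega)^-$ is exactly the part that breaks. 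In the proof of Theorem \ref{thm:bufacomimiranda}, that identification rests on two facts which require the test function to be continuous up to the boundary: first, the bound $|T_X^t(f)|\le \Vert\, |X|\,\Vert_{L^\infty(\Omega,\nu)}\int_{\mathbb{X}}|\overline f|\,|\nabla\varphi_t^\Omega|\,d\nu$, where $\overline f$ is a Lipschitz extension of $f$ to $\mathbb{X}$, followed by the weak* convergence $|\nabla\varphi_t^\Omega|\,d\nu\rightharpoonup|D\1_\Omega|_\nu$ of Proposition \ref{prop:weakconvergence}, which can only be tested against functions continuous near $\partial\Omega$; second, the Riesz representation, which identifies the limit functional with $(X\cdot\nu_\Omega)^-$ only on restrictions to $\partial\Omega$ of Lipschitz functions, and is then extended by density in $L^1(\partial\Omega,|D\1_\Omega|_\nu)$. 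A function $u_n\in\mbox{Lip}_{loc}(\Omega)$ produced by Lemma \ref{lem:goodapproximation} is in general neither bounded nor continuous up to $\partial\Omega$ --- its boundary values exist only in the Lebesgue-point sense of Definition \ref{dfn:trace} --- so neither fact applies, and there is no a priori relation between $\lim_{t\to 0}\int_\Omega u_n\,d\varphi_t^\Omega(X)\,d\nu$ and $\int_{\partial\Omega}T_\Omega u_n\,(X\cdot\nu_\Omega)^-\,d|D\1_\Omega|_\nu$. Indeed, your fixed-$n$ identity \emph{is} the statement of Theorem \ref{thm:generalgreensformula} for the subclass of locally Lipschitz BV functions, so the proposal in effect assumes, for a dense subclass, the very theorem it sets out to prove.

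The repair is to abandon the trace-preserving approximation, which is what the paper does: apply the plain Lemma \ref{lem:lipschitzapproximation} on the metric measure space $\overline{\Omega}$ to obtain globally Lipschitz $u_i\in\mbox{Lip}(\overline\Omega)$ converging strictly to $u$; Theorem \ref{thm:bufacomimiranda} then applies verbatim with $f=u_i$. The boundary term is no longer constant in $i$, but it converges to $\int_{\partial\Omega}T_\Omega u\,(X\cdot\nu_\Omega)^-\,d|D\1_\Omega|_\nu$ because the trace operator is continuous with respect to strict convergence (see \cite[Proposition 7.1]{KLLS}); the interior terms are handled exactly as you propose, after observing that $du_i(X)\,d\nu$ and $(X,Du_i)$ coincide as measures for Lipschitz $u_i$. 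A secondary point in the same direction: Lemma \ref{lem:continuityofXDu} is stated for approximating sequences as in Lemma \ref{lem:lipschitzapproximation}, so invoking it for the sequence of Lemma \ref{lem:goodapproximation}, as you do, would additionally require checking that its proof uses only the convergence properties and not global Lipschitzness. The trace-preserving Lemma \ref{lem:goodapproximation} is genuinely needed elsewhere in the paper (in the duality argument of Theorem \ref{existme}), but it is the wrong tool here.
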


 Actually, a bit less is required - for the proof, we need only existence of traces of $BV$ functions and not existence of extensions. Hence, we do not need the Ahlfors codimension one condition from Theorem \ref{thm:traces}.

\begin{proof}
Since  $X \in L^\infty(T\Omega)$ is fixed and it has integrable divergence, the  normal trace $(X \cdot \nu_\Omega)^-$  is a fixed function from $L^\infty(\partial\Omega)$. We use Lemma \ref{lem:lipschitzapproximation}; given  $u \in BV(\Omega,d,\nu)$, we find a sequence $u_i \in Lip(\overline{\Omega})$ such that $u_i \rightarrow u$ strictly. We use Theorem \ref{thm:bufacomimiranda} with $f = u_i$ and obtain
\begin{equation}\label{eq:greenbeforelimit}
\int_\Omega u_i \, \mbox{div}_0(X) \, d\nu + \int_\Omega du_i(X) \, d\nu =  - \int_{\partial\Omega} u_i \, (X \cdot \nu_\Omega)^- \, d| D_{\1_{\Omega}} |_\nu.
\end{equation}
First, let us pass to the limit on the right hand side of equation \eqref{eq:greenbeforelimit}. Since the trace operator is continuous with respect to strict convergence, see for instance \cite[Proposition 7.1]{KLLS}, we have
\begin{equation}
\lim_{i \rightarrow \infty} \int_{\partial\Omega} u_i \,  (X \cdot \nu_\Omega)^- \, d| D_{\1_{\Omega}} |_\nu = \int_{\partial\Omega} T_\Omega u \,  (X \cdot \nu_\Omega)^- \, d| D_{\1_{\Omega}} |_\nu.
\end{equation}
As for the left hand side, first notice that because $u_i$ are Lipschitz, we have
$$\int_\Omega  du_i(X) \, d\nu = \int_\Omega (X, Du_i).$$
To see this, let $g \in \mbox{Lip}(\Omega)$ have bounded support in $\Omega$. Then, by the $L^\infty$-linearity of the differential, we have
$$ \int_\Omega g \, du_i(X) \, d\nu = \int_\Omega du_i(gX) \, d\nu = - \int_\Omega u_i \,  \mbox{div}_0(gX)  \, d\nu = \langle (X, Du_i), g \rangle.$$
Hence, integration with respect to $du_i(X) d\nu$ coincides with integration with respect $(X,Du_i)$ as a functional on Lipschitz functions with compact support, hence $du_i(X) d\nu$ and $(X,Du_i)$ coincide as measures.

Hence, on the left hand side, we have
$$ \lim_{i \rightarrow \infty} \bigg( \int_\Omega u_i \,  \mbox{div}_0(X) \, d\nu + \int_\Omega (X, Du_i) \bigg) =  \int_\Omega u \,  \mbox{div}_0(X)  \, d\nu + \int_\Omega (X, Du),$$
where we pass to the limit in the first summand using the assumption \eqref{Anzellotti:assumption} in the second summand using Lemma \ref{lem:continuityofXDu}. Hence, on both sides of equation \eqref{eq:greenbeforelimit} we may pass to the limit and obtain
$$ \int_\Omega u \, \mbox{div}_0(X) \, d\nu + \int_\Omega (X,Du) =  - \int_{\partial\Omega}   T_\Omega u \, (X \cdot \nu_\Omega)^- \, d| D_{\1_{\Omega}} |_\nu,$$
so the Theorem is proved.
\end{proof}

\section{Least gradient functions on metric measure spaces}\label{sec:leastgradient}
	
Now, we are ready to study least gradient functions on metric measure spaces.  In this Section, we also assume the structural assumptions on the metric measure space $(\mathbb{X},d,\nu)$ and the open bounded set $\Omega \subset \mathbb{X}$ introduced at the beginning of Section \ref{sec:Anzellotti}. Moreover, we frequently will additionally require that the assumptions that guarantee existence of traces are satisfied, such as in Theorem \ref{thm:traces}.

\begin{definition}\label{lg2}{\rm We say that $u\in BV(\Omega, d, \nu)$ is a {\it least gradient function} in $\Omega$, if
$$ \int_\Omega |Du|_\nu \leq  \int_\Omega |Dv|_\nu $$
for all $v \in BV(\Omega, d, \nu)$ such that  $T_\Omega u = T_\Omega v$}.
\end{definition}
	
\begin{remark}\label{equivalence} {\rm  In the Euclidean case the two definitions of functions of least gradient (Definition \ref{lg2} and equation \eqref{eq:leastgradient}) are equivalent (see \cite{SZ1}). It is clear that Definition \ref{lg2} implies equation \eqref{eq:leastgradient}, but the another implication is not immediate. However, it was proved in \cite[Proposition 9.3]{KLLS} that assuming $\mathcal{H}(\partial \Omega) < \infty$ both definitions are equivalent.$\blacksquare$}
\end{remark}

We want to give an equivalent characterisation using the newly defined pairing $(X, Du)$. Given $f \in L^1(\partial \Omega, \mathcal{H})$, we consider the energy functional $\mathcal{T}_f : L^1(\Omega, \nu) \rightarrow [0, + \infty]$ defined by the formula
\begin{equation}\label{fuctme}
\mathcal{T}_f (u):= \left\{ \begin{array}{ll} \vert Du \vert_\nu (\Omega) + \displaystyle\int_{\partial \Omega} \vert  T_\Omega(u) - f \vert \, d |D\1_\Omega|_\nu   \quad &\hbox{if} \ u \in BV(\Omega, d, \nu), \\ \\ + \infty \quad &\hbox{if} \ u \in  L^1(\Omega, \nu) \setminus BV(\Omega, d, \nu).\end{array}\right.
\end{equation}

In the proof of the main result in this Section, Theorem \ref{existme}, we will restrict the domain of definition of $\mathcal{T}_f$ to the Sobolev space $W^{1,1}(\Omega,d,\nu)$. Since $\nu(\partial\Omega) = 0$, given $u \in W^{1,1}(\Omega,d,\nu)$ we have
\begin{equation*}
\mathcal{T}_f (u)= \int_\Omega |Du|_\nu  + \displaystyle\int_{\partial \Omega} \vert  T_\Omega(u) - f \vert \, d |D\1_\Omega|_\nu = \| du \|_{L^1(T^* \Omega)}  + \displaystyle\int_{\partial \Omega} \vert  T_\Omega(u) - f \vert \, d |D\1_\Omega|_\nu.
\end{equation*}

\begin{proposition}\label{semicont}
Suppose that $\Omega$ and $\mathbb{X} \backslash \overline{\Omega}$ satisfy the assumptions of Theorem \ref{thm:traces}. Then, the functional $\mathcal{T}_f$ defined by the formula \eqref{fuctme} is convex and lower semicontinuous with respect to convergence in $L^1(\Omega,\nu)$.
\end{proposition}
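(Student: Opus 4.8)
The plan is to treat the two assertions separately: convexity comes directly from the algebraic structure of the functional, while lower semicontinuity will be reduced to the lower semicontinuity of the total variation on the ambient space through a gluing construction across $\partial\Omega$.

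\textbf{Convexity.} First I would record that the total variation $|D\cdot|_\nu(\Omega)$ is convex. From the definition \eqref{dfn:totalvariationonmetricspaces} as a relaxation one reads off subadditivity $|D(u+v)|_\nu(\Omega) \le |Du|_\nu(\Omega)+|Dv|_\nu(\Omega)$ (take near-optimal locally Lipschitz approximations of $u$ and $v$, add them, and use $|\nabla(u_n+v_n)| \le |\nabla u_n|+|\nabla v_n|$) together with positive $1$-homogeneity; these two properties give convexity. Since by Theorem \ref{thm:traces} the trace operator $T_\Omega$ is linear, for $\lambda\in[0,1]$ we have $T_\Omega(\lambda u+(1-\lambda)v)-f = \lambda(T_\Omega u - f)+(1-\lambda)(T_\Omega v - f)$, so the boundary integrand is convex by the triangle inequality and the boundary term is convex after integration against $|D\1_\Omega|_\nu$. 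Adding the two convex pieces (the convention $+\infty$ off $BV(\Omega,d,\nu)$ being compatible with convexity) yields convexity of $\mathcal{T}_f$.

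\textbf{Lower semicontinuity: reduction and obstacle.} Let $u_n \to u$ in $L^1(\Omega,\nu)$. I may assume $\liminf_n \mathcal{T}_f(u_n) < \infty$ and, passing to a subsequence, that it is a finite limit; then $u_n \in BV(\Omega,d,\nu)$ with $\sup_n|Du_n|_\nu(\Omega)<\infty$ since the boundary term is nonnegative, so lower semicontinuity of the total variation forces $u\in BV(\Omega,d,\nu)$. The genuine difficulty is the boundary term: the trace is \emph{not} continuous for $L^1$ convergence (only for strict convergence), so one cannot pass to the limit in $\int_{\partial\Omega}|T_\Omega u_n-f|\,d|D\1_\Omega|_\nu$ directly, and this term is not lower semicontinuous in isolation — it must be combined with the total variation. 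To do this I would set $\Omega^-=\mathbb{X}\setminus\overline{\Omega}$ (which by hypothesis also satisfies the assumptions of Theorem \ref{thm:traces}) and fix once and for all a boundedly supported $\bar f\in BV(\Omega^-,d,\nu)$ with $T_{\Omega^-}\bar f=f$, given by the extension operator of Theorem \ref{thm:traces} applied to $\Omega^-$. For $v\in BV(\Omega,d,\nu)$ let $w_v$ equal $v$ on $\Omega$ and $\bar f$ on $\Omega^-$; since $\nu(\partial\Omega)=0$ this is well defined $\nu$-a.e., and $u_n\to u$ in $L^1(\Omega,\nu)$ forces $w_{u_n}\to w_u$ in $L^1(\mathbb{X},\nu)$. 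The crux is the gluing decomposition across $\partial\Omega$,
\begin{equation*}
|Dw_v|_\nu(\mathbb{X}) = |Dv|_\nu(\Omega) + |D\bar f|_\nu(\Omega^-) + \int_{\partial\Omega}|T_\Omega v - f|\,d|D\1_\Omega|_\nu,
\end{equation*}
equivalently $\mathcal{T}_f(v)=|Dw_v|_\nu(\mathbb{X})-|D\bar f|_\nu(\Omega^-)$, whose last term is a constant independent of $v$. Granting this, lower semicontinuity of $|D\cdot|_\nu(\mathbb{X})$ along $w_{u_n}\to w_u$ gives $\mathcal{T}_f(u)\le \liminf_n \mathcal{T}_f(u_n)$ after subtracting the constant.

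\textbf{The main obstacle.} The hard part will be the gluing identity, and in fact I only need one inequality for the sequence and the other for the limit: the upper bound $|Dw_{u_n}|_\nu(\mathbb{X})\le \mathcal{T}_f(u_n)+|D\bar f|_\nu(\Omega^-)$, which is constructive (glue strict approximations of $u_n$ and of $\bar f$ through a thin transition layer modelled on the defining sequence $\varphi_t^\Omega$ of $\Omega$ and estimate the layer contribution by the jump of the traces), and the lower bound $|Dw_u|_\nu(\mathbb{X})\ge \mathcal{T}_f(u)+|D\bar f|_\nu(\Omega^-)$, which follows from additivity of the measure $|Dw_u|_\nu$ over the partition $\Omega,\partial\Omega,\Omega^-$ together with a lower bound on the jump part $|Dw_u|_\nu(\partial\Omega)$ by $\int_{\partial\Omega}|T_\Omega u - T_{\Omega^-}\bar f|\,d|D\1_\Omega|_\nu$. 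Both inequalities rest on identifying the boundary part of the total variation of a two-sided $BV$ function with the trace difference integrated against the perimeter measure $|D\1_\Omega|_\nu$; this is exactly where the trace theory of Theorem \ref{thm:traces}, used on both sides of $\partial\Omega$, and the structure of the perimeter measure \eqref{Ammb} enter decisively, and is the step I expect to require the most care.
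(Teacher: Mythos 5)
Your proposal is correct and follows essentially the same route as the paper: extend $f$ to $\mathbb{X}\setminus\overline{\Omega}$ by the extension operator, glue across $\partial\Omega$, and deduce lower semicontinuity of $\mathcal{T}_f$ from lower semicontinuity of $|D\cdot|_\nu(\mathbb{X})$ via the decomposition of the total variation of the glued function into interior parts plus the trace-jump term on the boundary. The gluing identity that you single out as the main obstacle and sketch by hand is precisely what the paper invokes as a known result (\cite[Proposition 5.11]{HKLL}, see also \cite[Proposition 7.5]{KLLS}), valid under exactly the hypotheses of Theorem \ref{thm:traces}, so no regularity of $\Omega$ in the sense of Definition \ref{def:regulardomain} --- and hence no defining sequence, which your sketch of the upper bound would otherwise lean on --- is needed for it.
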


\begin{proof}
It is clear that $\mathcal{T}_f$ is convex.  Let us see that $\mathcal{T}_f$ is lower semi-continuous with respect to the $L^1$-convergence. Without loss of generality, we can assume that $\mathbb{X} \setminus \overline{\Omega}$ is an open bounded set, since we can take   an open ball $B$ such that $\overline{\Omega} \subset B$, and work in the metric measure space $(B, d, \nu|_B)$.  By Theorem \ref{thm:traces}, there exists a function $\mbox{Ext}(f) \in BV(\mathbb{X} \backslash \overline{\Omega},d,\nu)$ such that $T_{\mathbb{X} \backslash \overline{\Omega}} (\mbox{Ext}(f)) = f$. Given $u \in BV(\Omega, d, \nu)$ and $f \in L^1(\partial \Omega, \mathcal{H})$, we denote $u_f:= u \1_\Omega + {\rm Ext}(f) \1_{\mathbb{X} \setminus \overline{\Omega}} \in L^1(\mathbb{X},\nu)$. Then, by \cite[Proposition 5.11]{HKLL} (see also  \cite[Proposition 7.5]{KLLS}) we have $u_f \in BV(\mathbb{X}, d, \nu)$ and we have
$$
\vert Du_f \vert_\nu (\mathbb{X}) = \vert Du_f \vert_\nu(\mathbb{X} \setminus  \partial_* \Omega)  +  \int_{\partial_* \Omega} \vert T_{\Omega} u_f - T_{\mathbb{X} \setminus \Omega} u_f \vert \,  d|D \1_\Omega|_\nu.
$$
Hence
\begin{equation}\label{lsc1}
\vert Du_f \vert_\nu (\mathbb{X}) = \vert Du \vert_\nu(\Omega)  + \vert D{\rm Ext}(f) \vert_\nu(\mathbb{X} \setminus \overline{ \Omega})  +  \int_{\partial_* \Omega} \vert T_{\Omega} u - f \vert \,   d|D\1_\Omega|_\nu.
\end{equation}
Given $u_n \in L^1(\Omega, \nu)$, such that $u_n \to u$ in $L^1(\Omega, \nu)$, we have $(u_n)_f \to u_f$ in $L^1(\mathbb{X}, \nu)$. Then, by the lower semi-continuity of the total variation, we have
$$\vert Du_f \vert_\nu (\mathbb{X}) \leq \liminf_{n \to \infty} \vert D(u_n)_f \vert_\nu (\mathbb{X}).$$
Therefore, by \eqref{lsc1}, we obtain that
$$\mathcal{T}_f (u) \leq \liminf_{n \to \infty} \mathcal{T}_f (u_n).$$
\end{proof}

\begin{proposition}\label{minimizer}
Suppose that $\Omega$ and $\mathbb{X} \backslash \overline{\Omega}$ satisfy the assumptions of Theorem \ref{thm:traces}. Then, the functional $\mathcal{T}_f$ defined by the formula \eqref{fuctme} has a minimiser.
\end{proposition}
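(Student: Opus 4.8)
The plan is to apply the direct method of the calculus of variations. Proposition \ref{semicont} already supplies convexity and $L^1(\Omega,\nu)$-lower semicontinuity of $\mathcal{T}_f$, so the only substantive step is coercivity: I must show that a minimizing sequence is bounded in $BV(\Omega,d,\nu)$, after which the compact embedding $BV \hookrightarrow L^1$ (valid under the doubling and weak $(1,1)$-Poincar\'e assumptions) yields an $L^1$-convergent subsequence, and lower semicontinuity finishes the argument.

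The cleanest route is to reuse the identity established in the proof of Proposition \ref{semicont}. As there, fix a ball $B$ with $\overline{\Omega} \subset B$ and work in $(B,d,\nu|_B)$, writing $u_f := u\1_\Omega + \mathrm{Ext}(f)\1_{B \setminus \overline{\Omega}}$. Equation \eqref{lsc1} then gives $\mathcal{T}_f(u) = |Du_f|_\nu(B) - |D\mathrm{Ext}(f)|_\nu(B \setminus \overline{\Omega})$, where the subtracted quantity is a fixed finite constant. Since $u \mapsto u_f$ is a bijection (up to the $\nu$-null set $\partial\Omega$) between $BV(\Omega,d,\nu)$ and the set of $w \in BV(B,d,\nu)$ with $w = \mathrm{Ext}(f)$ on $B \setminus \overline{\Omega}$, minimizing $\mathcal{T}_f$ is equivalent to minimizing the total variation $|Dw|_\nu(B)$ subject to this fixed-boundary constraint.

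I would then take a minimizing sequence $w_n$, so that $|Dw_n|_\nu(B)$ is bounded, and produce an $L^1$ bound as follows. Applying the $(1,1)$-Poincar\'e inequality on $B$ controls $\| w_n - (w_n)_B \|_{L^1(B)}$ by $C\, r\, |Dw_n|_\nu(\lambda B)$, which is uniformly bounded. The averages $(w_n)_B$ are bounded too: because $w_n = \mathrm{Ext}(f)$ on the fixed set $B \setminus \overline{\Omega}$ of positive measure, the triangle inequality gives $\nu(B \setminus \overline{\Omega})\, |(w_n)_B| \leq \| w_n - (w_n)_B \|_{L^1(B \setminus \overline{\Omega})} + \| \mathrm{Ext}(f) \|_{L^1(B \setminus \overline{\Omega})}$, and both terms on the right are bounded. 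Hence $\{ w_n \}$ is bounded in $BV(B,d,\nu)$.

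By the compactness theorem for $BV$ functions in doubling Poincar\'e spaces, a subsequence converges in $L^1(B,\nu)$ to some $w$; the constraint $w = \mathrm{Ext}(f)$ on $B \setminus \overline{\Omega}$ passes to the $L^1$-limit, so $u := w|_\Omega \in BV(\Omega,d,\nu)$ satisfies $w = u_f$ $\nu$-a.e. Lower semicontinuity from Proposition \ref{semicont} then yields $\mathcal{T}_f(u) \leq \liminf_n \mathcal{T}_f(w_n|_\Omega)$, exhibiting $u$ as a minimizer. The main obstacle is exactly the coercivity step: the total variation cannot by itself bound the $L^1$ norm, since it annihilates additive constants, and this degeneracy is broken only by pinning the values of the competitors on $B \setminus \overline{\Omega}$ to the fixed extension $\mathrm{Ext}(f)$.
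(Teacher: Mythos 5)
Your proof is correct, and it follows the same overall skeleton as the paper's argument (direct method applied to the extended functions $u_f$, using the identity \eqref{lsc1} and the lower semicontinuity from Proposition \ref{semicont}), but the coercivity step --- which, as you rightly identify, is the heart of the matter --- is obtained by a genuinely different mechanism. The paper takes $\mathrm{Ext}(f)$, and hence $u_f$, to be compactly supported in the ball $B$ and invokes the Sobolev inequality for compactly supported BV functions from \cite[Lemma 2.2]{KKLS}, namely $\| u_f \|_{L^{Q/(Q-1)}(\mathbb{X},\nu)} \leq C |Du_f|_\nu(\mathbb{X})$; there the translation-invariance degeneracy of the total variation is broken by the compact support, and one gets the stronger conclusion that minimising sequences (and the minimiser) are bounded in $L^{Q/(Q-1)}(\mathbb{X},\nu)$, at the cost of citing that Sobolev-type lemma and the homogeneous dimension $Q$. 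You instead break the degeneracy by pinning the competitors to $\mathrm{Ext}(f)$ on the fixed set $B \setminus \overline{\Omega}$ of positive measure, and combining the weak $(1,1)$-Poincar\'e inequality with the triangle-inequality bound on the averages $(w_n)_B$; this is more elementary --- it uses only the standing Poincar\'e assumption together with the compact embedding of BV into $L^1$ in doubling Poincar\'e spaces (Miranda's compactness theorem, see \cite{Miranda1}) --- but yields only an $L^1$/BV bound. Both routes are legitimate; the paper's is shorter given the cited lemma, while yours is more self-contained.

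Two small points should be tightened. First, the weak $(1,1)$-Poincar\'e inequality is stated for upper gradients of functions; the version with $|Dw_n|_\nu(\lambda B)$ on the right-hand side for BV functions deserves a word of justification (it follows by applying the inequality to the locally Lipschitz approximations in \eqref{dfn:totalvariationonmetricspaces} and passing to the limit). Second, applying the Poincar\'e inequality ``on $B$'' requires controlling the total variation on the dilated ball $\lambda B$, which may leave $B$, and the restricted space $(B,d,\nu|_B)$ need not itself support a Poincar\'e inequality. The clean fix is to keep the competitors $w_n = u_n \1_\Omega + \mathrm{Ext}(f)\1_{\mathbb{X}\setminus\overline{\Omega}}$ defined on all of $\mathbb{X}$ and apply the ambient Poincar\'e inequality to the ball $B$, noting that $|Dw_n|_\nu(\lambda B) \leq |Dw_n|_\nu(\mathbb{X}) = \mathcal{T}_f(u_n) + |D\mathrm{Ext}(f)|_\nu(\mathbb{X}\setminus\overline{\Omega})$ by \eqref{lsc1}, which is bounded along a minimising sequence. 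With this adjustment your argument goes through.
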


\begin{proof}
 Let $B$ be a ball such that $\Omega \subset \subset B$ and $\nu(B \setminus \Omega) > 0$. Recall that (as in the proof of the previous Proposition) $\mathcal{T}_f(u)$ coincides with the total variation of some extension $u_f$; without loss of generality, we may assume that $u_f$ has support in the ball $B$. Then, by \cite[Lemma 2.2]{KKLS},
$$ \| u_f \|_{L^{Q/(Q-1)}(\mathbb{X},\nu)} \leq C |Du_f|_\nu(\mathbb{X}) = C_0 + C_1 \mathcal{T}_f(u),$$
where $Q$ is the homogenous dimension of $\mathbb{X}$ and $C$ depends on the radius of $B$, the doubling and Poincar\'e constants of $\mathbb{X}$ and the ratio of $\nu(B \setminus \Omega)/\nu(B)$. Hence, the functional $\mathcal{T}_f$ is coercive and minimising sequences are bounded in $L^{Q/(Q-1)}(\mathbb{X},\nu)$. Hence, we may apply the direct method, and get existence of a limiting function which lies in $L^{Q/(Q-1)}(\mathbb{X},\nu)$ which is a minimiser of the functional $\mathcal{T}_f$.
\end{proof}

Note that $u$ is a  solution to the Dirichlet problem of least gradient with boundary data $f$ in the sense of (T) if and only if $u$ is a minimiser of the energy functional $\mathcal{T}_f$, which is equivalent to
\begin{equation}\label{EL1}
0 \in \partial \mathcal{T}_f (u),
\end{equation}
that is the Euler-Lagrange of the variational problem, where $\partial \mathcal{T}_f$ is the subdifferential of the energy functional $ \mathcal{T}_f$. We formally write \eqref{EL1} as the following Dirichlet problem
\begin{equation}\label{DirichletME}
\left\{ \begin{array}{ll} - \Delta_{1,\nu} u = 0 \quad & \hbox{in} \ \ \Omega \\ \\ u = f  \quad & \hbox{on} \ \ \partial \Omega. \end{array} \right.
\end{equation}

Functions satisfying the PDE in \eqref{DirichletME} are called {\it 1-harmonic functions}. Following the characterisation of the subdifferential of $\Phi_h$ given in \cite{ABCM} (see also \cite{MazRoSe}), we give the following definition.
\begin{definition}\label{dfn:1laplace}{\rm
We say that $u$ is a solution of \eqref{DirichletME}, if there exists a vector field  $X \in \mathcal{D}_0^\infty(\Omega)$ with $\| X \|_\infty \leq 1$ such that the following conditions hold:
$$\mbox{div}_0(X) = 0 \quad \hbox{ in} \ \Omega; $$
$$ (X, Du) = |Du|_\nu \quad \hbox{as measures};$$
$$ (X \cdot \nu_\Omega)^- \in \mbox{sign}( T_\Omega u -f) \qquad  | D_{\1_{\Omega}} |_\nu-a.e. \ \hbox{on} \ \partial \Omega.$$}
\end{definition}

In order to prove the existence of solutions of problem \eqref{DirichletME} we need to use the version of the Fenchel-Rockafellar duality Theorem given in \cite[Remark 4.2]{EkelandTemam}.

Let $U,V$ be two Banach spaces and let $A: U \rightarrow V$ be a continuous linear operator. Denote by $A^*: V^* \rightarrow U^*$ its dual. Then, if the primal problem is of the form
\begin{equation}\tag{P}\label{eq:primal}
\inf_{u \in U} \bigg\{ E(Au) + G(u) \bigg\},
\end{equation}
then the dual problem is defined as the maximisation problem
\begin{equation}\tag{P*}\label{eq:dual}
\sup_{p^* \in V^*} \bigg\{ - E^*(-p^*) - G^*(A^* p^*) \bigg\},
\end{equation}
where $E^*$ and $G^*$ are the Legendre–Fenchel transformations (conjugate  functions) of $E$ and $G$ respectively, i.e.,
$$E^* (u^*):= \sup_{u \in U} \left\{ \langle u, u^* \rangle - E(u) \right\}.$$

\begin{theorem}[Fenchel-Rockafellar Duality Theorem]\label{FRTh} Assume that $E$ and $G$ are proper, convex and lower semi-continuous. If there exists $u_0 \in U$ such that $E(A u_0) < \infty$, $G(u_0) < \infty$ and $E$ is continuous at $A u_0$, then
$$\inf \eqref{eq:primal} = \sup \eqref{eq:dual}$$
and the dual problem \eqref{eq:dual} admits at least one solution. Moreover, the optimality condition of these two problems is given by
\begin{equation}\label{optimality}
A^* p^* \in \partial G(\overline{u}), \quad -p^* \in \partial E(A\overline{u})),
\end{equation}
where $\overline{u}$ is solution of \eqref{eq:primal} and $p^*$ is solution of \eqref{eq:dual}.
\end{theorem}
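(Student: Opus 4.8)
The plan is to follow the standard perturbation (value-function) route to Fenchel-Rockafellar duality, exactly as in \cite{EkelandTemam}, in three stages: weak duality, the absence of a duality gap together with attainment in \eqref{eq:dual}, and finally the extraction of the optimality conditions \eqref{optimality}.

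First I would establish weak duality from the Fenchel-Young inequality. For every $u \in U$ and $p^* \in V^*$ one has
$$E(Au) + E^*(-p^*) \geq \langle Au, -p^* \rangle, \qquad G(u) + G^*(A^* p^*) \geq \langle u, A^* p^* \rangle,$$
and since $\langle u, A^* p^* \rangle = \langle Au, p^* \rangle$, adding the two inequalities makes the duality pairings cancel and yields
$$E(Au) + G(u) \geq -E^*(-p^*) - G^*(A^* p^*).$$
Taking the infimum over $u$ and the supremum over $p^*$ gives $\inf \eqref{eq:primal} \geq \sup \eqref{eq:dual}$. This direction is elementary and uses neither lower semicontinuity nor the qualification hypothesis.

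The core of the argument is to show that there is no gap and that \eqref{eq:dual} is attained. To this end I would introduce the value function $h : V \rightarrow [-\infty, +\infty]$,
$$h(z) = \inf_{u \in U} \big\{ E(Au + z) + G(u) \big\},$$
so that $h(0) = \inf \eqref{eq:primal}$. Since $(u,z) \mapsto E(Au+z) + G(u)$ is convex and $h$ is its partial infimum over $u$, the function $h$ is convex, and a direct computation of the conjugate gives $h^*(p^*) = E^*(p^*) + G^*(-A^* p^*)$, whence $h^{**}(0) = \sup \eqref{eq:dual}$ after the sign substitution $p^* \mapsto -p^*$. The main obstacle is precisely here: one must upgrade convexity of $h$ to continuity, hence subdifferentiability, at $0$, and this is where the qualification hypothesis enters. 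Because $E$ is continuous at $Au_0$ it is bounded above on a neighbourhood $N$ of $Au_0$; then for $z$ in the neighbourhood $N - Au_0$ of $0$ one has $h(z) \leq E(Au_0 + z) + G(u_0) \leq \sup_N E + G(u_0) < \infty$, so $h$ is bounded above near $0$. A convex function that is finite at $0$ and bounded above on a neighbourhood of $0$ is continuous at $0$, and a finite convex function continuous at a point is subdifferentiable there, since by Hahn-Banach its epigraph admits a non-vertical supporting hyperplane. Subdifferentiability at $0$ forces $h(0) = h^{**}(0)$, i.e. $\inf \eqref{eq:primal} = \sup \eqref{eq:dual}$; moreover the elements of $\partial h(0)$ are exactly the maximisers of $p^* \mapsto -h^*(p^*)$, and translating back through $p^* \mapsto -p^*$ produces a solution $p^*$ of \eqref{eq:dual}.

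Finally I would read off the optimality conditions. With $\overline{u}$ optimal for \eqref{eq:primal}, $p^*$ optimal for \eqref{eq:dual}, and no gap, the chain of inequalities from the weak duality step collapses to the equality
$$E(A\overline{u}) + G(\overline{u}) = -E^*(-p^*) - G^*(A^* p^*),$$
which, after regrouping and using again $\langle A\overline{u}, -p^* \rangle + \langle \overline{u}, A^* p^* \rangle = 0$, forces both nonnegative Fenchel-Young defects
$$E(A\overline{u}) + E^*(-p^*) - \langle A\overline{u}, -p^* \rangle \geq 0, \qquad G(\overline{u}) + G^*(A^* p^*) - \langle \overline{u}, A^* p^* \rangle \geq 0$$
to vanish. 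Since equality in the Fenchel-Young inequality is equivalent to the corresponding subdifferential inclusion, we obtain $-p^* \in \partial E(A\overline{u})$ and $A^* p^* \in \partial G(\overline{u})$, which is precisely \eqref{optimality}. I expect the weak-duality and optimality-condition steps to be routine; the delicate point is the continuity of $h$ at $0$, for which the hypothesis that $E$ is continuous at some feasible point $Au_0$ is indispensable.
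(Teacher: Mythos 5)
The paper does not prove this theorem at all — it is quoted directly from Ekeland--Temam \cite[Remark 4.2]{EkelandTemam} — and your argument is precisely the classical perturbation-function proof given in that reference: weak duality via Fenchel--Young, continuity (hence subdifferentiability) of the value function $h$ at $0$ obtained from the qualification hypothesis, identification of $\partial h(0)$ with the dual maximisers, and collapse of the two Fenchel--Young defects for the optimality conditions. Your proof is correct and essentially the same as the source's; the only implicit point is that the continuity/subdifferentiability step needs $h(0)=\inf\eqref{eq:primal}$ to be finite, which is tacit in the statement (solutions $\overline{u}$ of \eqref{eq:primal} are presumed to exist) and automatic in the paper's application, where the primal functional $\mathcal{T}_f$ is nonnegative.
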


\begin{theorem}\label{existme}
Let $\Omega \subset \mathbb{X}$ be a regular domain such that $\Omega$ and $\mathbb{X} \backslash \overline{\Omega}$ satisfy the assumptions of Theorem \ref{thm:traces}. Then, for  each $f \in L^1(\partial \Omega, \mathcal{H})$ there exists a solution of \eqref{DirichletME}.
\end{theorem}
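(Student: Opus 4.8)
The plan is to produce the vector field $X$ required by Definition \ref{dfn:1laplace} via the Fenchel--Rockafellar duality of Theorem \ref{FRTh}, and then to verify its three defining conditions for the minimiser of $\mathcal{T}_f$ supplied by Proposition \ref{minimizer}. First I would fix a minimiser $\overline u \in BV(\Omega,d,\nu)$ of $\mathcal{T}_f$ and set up the primal problem over $U = W^{1,1}(\Omega,d,\nu)$, taking $V = L^1(T^*\Omega)$, $A = d$ the differential (a bounded linear operator of norm one), $E(\omega) = \|\omega\|_{L^1(T^*\Omega)} = \int_\Omega |\omega|_* \, d\nu$, and $G(u) = \int_{\partial\Omega} |T_\Omega u - f| \, d|D\1_\Omega|_\nu$. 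As noted after \eqref{fuctme}, for $u \in W^{1,1}(\Omega,d,\nu)$ we have $\mathcal{T}_f(u) = E(du) + G(u)$, so $\inf\eqref{eq:primal}$ is the infimum of $\mathcal{T}_f$ over $W^{1,1}(\Omega,d,\nu)$. Both $E$ and $G$ are proper, convex and lower semicontinuous ($G$ by the argument of Proposition \ref{semicont}), $E$ is continuous everywhere, and $G(0) = \int_{\partial\Omega}|f| \, d|D\1_\Omega|_\nu < \infty$, so the qualification hypothesis of Theorem \ref{FRTh} holds at $u_0 = 0$.

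Next I would compute the two conjugates. Since $L^1(T^*\Omega)^* = L^\infty(T\Omega)$, the conjugate $E^*$ is the indicator of the unit ball, so finiteness of the dual functional forces $\|X\|_\infty \le 1$ for the dual variable $X \in L^\infty(T\Omega)$. For the other conjugate, testing against $u \in W^{1,1}_0(\Omega,d,\nu)$ (where the boundary term is absent) shows that the relevant supremum is $+\infty$ unless $\int_\Omega du(X)\,d\nu = 0$ for all such $u$, i.e. unless $X \in \mathcal{D}_0^\infty(\Omega)$ with $\mbox{div}_0(X) = 0$. Under that constraint Theorem \ref{thm:bufacomimiranda} produces the normal trace $(X \cdot \nu_\Omega)^-$ with $\|(X\cdot\nu_\Omega)^-\|_\infty \le \| \vert X\vert \|_\infty \le 1$, and the Gauss--Green formula of Theorem \ref{thm:generalgreensformula} gives $\int_\Omega du(X)\,d\nu = -\int_{\partial\Omega} T_\Omega u\,(X\cdot\nu_\Omega)^-\,d|D\1_\Omega|_\nu$ for $u \in W^{1,1}(\Omega,d,\nu)$. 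Substituting this and using the surjectivity of $T_\Omega$ onto $L^1(\partial\Omega,\mathcal{H})$ from Theorem \ref{thm:traces} (so that $T_\Omega u$ ranges over all of $L^1(\partial\Omega)$), a short computation identifies the dual problem \eqref{eq:dual} as the maximisation of $-\int_{\partial\Omega} f\,(X\cdot\nu_\Omega)^-\,d|D\1_\Omega|_\nu$ over all $X \in L^\infty(T\Omega)$ with $\|X\|_\infty \le 1$ and $\mbox{div}_0(X) = 0$. Theorem \ref{FRTh} then yields a maximiser $X$ and the equality $\inf\eqref{eq:primal} = \sup\eqref{eq:dual}$.

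Then I would bridge the gap between the primal space $W^{1,1}(\Omega,d,\nu)$ and the actual minimiser $\overline u \in BV(\Omega,d,\nu)$. Using Lemma \ref{lem:goodapproximation} to approximate $\overline u$ strictly by functions in $\mathrm{Lip}_{loc}(\Omega) \cap BV(\Omega,d,\nu) \subset W^{1,1}(\Omega,d,\nu)$ with the same trace, the total variation converges while the boundary term is unchanged, so $\inf\eqref{eq:primal} = \mathcal{T}_f(\overline u)$; hence strong duality reads $|D\overline u|_\nu(\Omega) + \int_{\partial\Omega}|T_\Omega\overline u - f|\,d|D\1_\Omega|_\nu = -\int_{\partial\Omega} f\,(X\cdot\nu_\Omega)^-\,d|D\1_\Omega|_\nu$. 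Since $\mbox{div}_0(X) = 0$ and $\overline u \in BV(\Omega,d,\nu) \cap L^{Q/(Q-1)}(\Omega,\nu)$, the pair $(X,\overline u)$ satisfies \eqref{Anzellotti:assumption}, so I may apply Theorem \ref{thm:generalgreensformula} to $\overline u$ itself to get $\int_\Omega (X,D\overline u) = -\int_{\partial\Omega} T_\Omega\overline u\,(X\cdot\nu_\Omega)^-\,d|D\1_\Omega|_\nu$. Rewriting the strong-duality identity as $\int_\Omega (X,D\overline u) + \int_{\partial\Omega}(T_\Omega\overline u - f)(X\cdot\nu_\Omega)^-\,d|D\1_\Omega|_\nu = |D\overline u|_\nu(\Omega) + \int_{\partial\Omega}|T_\Omega\overline u - f|\,d|D\1_\Omega|_\nu$, and observing that $\int_\Omega (X,D\overline u) \le |D\overline u|_\nu(\Omega)$ by Proposition \ref{prop:boundonAnzellottipairing} while $\int_{\partial\Omega}(T_\Omega\overline u - f)(X\cdot\nu_\Omega)^- \le \int_{\partial\Omega}|T_\Omega\overline u - f|$ since $|(X\cdot\nu_\Omega)^-|\le 1$, equality of the two sides forces equality in each estimate. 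This gives $(X,D\overline u) = |D\overline u|_\nu$ as measures and $(X\cdot\nu_\Omega)^- \in \mbox{sign}(T_\Omega\overline u - f)$ $|D\1_\Omega|_\nu$-a.e., which together with $\mbox{div}_0(X) = 0$ and $\|X\|_\infty \le 1$ are exactly the conditions of Definition \ref{dfn:1laplace}.

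I expect the main obstacle to be precisely this last bridging step: the duality is naturally posed over $W^{1,1}(\Omega,d,\nu)$, but the minimiser of $\mathcal{T}_f$ only lives in $BV(\Omega,d,\nu)$, so one cannot simply invoke the optimality relations \eqref{optimality} (which would pair the dual solution with a primal $W^{1,1}$ solution that need not exist). The remedy is to extract only the vector field $X$ from the dual, and to recover the two equalities for the genuine $BV$ minimiser by forcing equality in the Gauss--Green chain above; this is where the $BV$ version of the Gauss--Green formula (Theorem \ref{thm:generalgreensformula}) and the trace-preserving strict approximation (Lemma \ref{lem:goodapproximation}) are indispensable. A secondary point requiring care is the justification that finiteness of the conjugate of $G$ is equivalent to $\mbox{div}_0(X) = 0$, which relies on the surjectivity of the trace operator and on the definition of $\mbox{div}_0$ through test functions vanishing on $\partial\Omega$.
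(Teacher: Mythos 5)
Your proposal is correct and reaches the same reduced dual problem and the same complementary-slackness conditions as the paper, but the duality is arranged differently, so a comparison is worthwhile. The paper puts the trace into the operator, taking $Au=(T_\Omega u,du)$, $V=L^1(\partial\Omega,|D\1_\Omega|_\nu)\times L^1(T^*\Omega)$ and $G\equiv 0$, so that both constraints $\mbox{div}_0(\overline{p}^*)=0$ and $p_0^*=(\overline{p}^*\cdot\nu_\Omega)^-$ are encoded in the single condition $A^*p^*=0$; you instead take $A=d$, put the fidelity term $\int_{\partial\Omega}|T_\Omega u-f|\,d|D\1_\Omega|_\nu$ into $G$, and let the normal trace emerge inside the computation of $G^*(A^*X)$. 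The two splittings are equivalent and lead to the same dual \eqref{eq:reducedmetricdual}, up to a sign: in your splitting the literal dual objective is $+\int_{\partial\Omega}f\,(X\cdot\nu_\Omega)^-\,d|D\1_\Omega|_\nu$ rather than the one you wrote, but since the constraint set is invariant under $X\mapsto -X$ this is immaterial for existence and for the value (the paper's own reduction has the same cosmetic wobble, which is why it ends with $-\overline{p}^*$ where you end with $X$). The more substantive difference is the bridge from $W^{1,1}(\Omega,d,\nu)$ to the $BV$ minimiser, which you correctly identify as the crux: the paper invokes the $\varepsilon$-subdifferentiability of minimising sequences, \cite[Proposition V.1.2]{EkelandTemam}, along the trace-preserving approximation of Lemma \ref{lem:goodapproximation}, and passes to the limit in \eqref{eq:epsilonsubdiff1}--\eqref{eq:epsilonsubdiff2}; you use only the equality $\inf\eqref{eq:primal}=\sup\eqref{eq:dual}$ from Theorem \ref{FRTh}, verify $\inf_{W^{1,1}}\mathcal{T}_f=\mathcal{T}_f(\overline{u})$ by the same Lemma, and then apply Theorem \ref{thm:generalgreensformula} to $\overline{u}$ itself, forcing equality in the two one-sided bounds coming from Proposition \ref{prop:boundonAnzellottipairing} and from $|(X\cdot\nu_\Omega)^-|\le 1$. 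This is a cleaner complementary-slackness argument that dispenses with the $\varepsilon$-subdifferential machinery; its only extra cost is the explicit (and easy) check that the infimum over $W^{1,1}$ coincides with the $BV$ minimum, which the paper obtains implicitly through its minimising sequence. A small bonus of your write-up is the explicit justification that the pair $(X,\overline{u})$ satisfies \eqref{Anzellotti:assumption}, via $\overline{u}\in L^{Q/(Q-1)}(\Omega,\nu)$, a point the paper's proof glosses over when it applies the Gauss--Green formula.
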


\begin{proof}
Let us express the minimisation of $\mathcal{T}_f$ in this framework. We restrict its domain of definition to $W^{1,1}(\Omega,d,\nu)$, so that the differential is a bounded operator from $W^{1,1}(\Omega,d,\nu)$ to $L^1(T^* \Omega)$. Therefore, we set  $U = W^{1,1}(\Omega,d,\nu)$, $V = L^1(\partial\Omega,|D\1_\Omega|_\nu) \times  L^1(T^{*} \Omega)$, and the operator $A: U \rightarrow V$ is defined by the formula
\begin{equation*}
Au = (T_\Omega u, du),
\end{equation*}
where $T_\Omega: BV(\Omega,d,\nu) \rightarrow L^1(\partial\Omega,\mathcal{H})$ is the trace operator in the sense of Definition \ref{dfn:trace} and $du$ is the differential of $u$ in the sense of Definition \ref{dfn:differential}. Hence, $A$ is a linear and continuous operator. Moreover, the dual spaces to $U$ and $V$ are
\begin{equation*}
U^* = (W^{1,1}(\Omega,d,\nu))^*, \qquad V^* =  L^\infty(\partial\Omega,|D\1_{\Omega}|_\nu) \times  L^\infty(T \Omega).
\end{equation*}
We denote the points $p \in V$ in the following way: $p = (p_0, \overline{p})$, where $p_0 \in L^1(\partial\Omega,\mathcal{H})$ and $\overline{p} \in  L^1(T^{*} \Omega)$. We will also use a similar notation for points $p^* \in V^*$. Then, we set $E: L^1(\partial\Omega,\mathcal{H}) \times  L^1(T^{*} \Omega) \rightarrow \mathbb{R}$ by the formula
\begin{equation}\label{eq:definitionofE}
E(p_0, \overline{p}) = E_0(p_0) + E_1(\overline{p}), \quad E_0(p_0) = \int_{\partial\Omega} |p_0 - f| \, d|D\1_{\Omega}|_\nu, \quad E_1(\overline{p}) = \| \overline{p} \|_{ L^1(T^{*} \Omega)}.
\end{equation}
We also set $G:  W^{1,1}(\Omega,d,\nu) \rightarrow \mathbb{R}$ to be the zero functional, i.e. $G \equiv 0$. In particular, the functional $G^*: (W^{1,1}(\Omega,d,\nu))^* \rightarrow [0,\infty]$ is given by the formula
\begin{equation*}
G^*(u^*) = \twopartdef{0}{\mbox{if } u^* = 0;}{+\infty}{\mbox{if } u^* \neq 0.}
\end{equation*}
The functional $E_0^*: L^\infty(\partial\Omega,\mathcal{H}) \rightarrow \mathbb{R} \cup \{ \infty \}$ is given by the formula
$$E_0^*(p_0^*) = \left\{ \begin{array}{ll} \displaystyle\int_{\partial\Omega} f \, p_0^* \, d|D\1_{\Omega}|_\nu, \quad &\hbox{if} \ \ |p_0^*| \leq 1, \\[10pt] +\infty,\quad &\hbox{otherwise.}   \end{array}  \right. $$
In fact, we only need to observe that
$$E_0^*(p_0^*) = \sup_{p_0 \in L^1(\partial\Omega,\mathcal{H})} \left\{ \int_{\partial \Omega} p_0 \, p_0^* \, d|D\1_{\Omega}|_\nu - \int_{\partial\Omega} |p_0 - f| \, d|D\1_{\Omega}|_\nu \right\}$$
$$= \sup_{p_0 \in L^1(\partial\Omega,\mathcal{H})} \left\{ \int_{\partial \Omega} (p_0 - f) \, p_0^* \, d|D\1_{\Omega}|_\nu + \int_{\partial\Omega} f \, p_0^* \, d|D\1_{\Omega}|_\nu  - \int_{\partial\Omega} |p_0 - f| \, d|D\1_{\Omega}|_\nu \right\}.$$
Let us see now that the functional $E_1^*:  L^\infty( T \Omega) \rightarrow [0,\infty]$ is given by the formula
$$E_1^*(\overline{p}^*) = \left\{ \begin{array}{ll}0, \quad &  \| \overline{p}^* \|_\infty \leq 1; \\[10pt] +\infty,\quad &\hbox{otherwise},\end{array}  \right. $$
i.e. $E_1^* = I_{B_1^\infty}$, the indicator function of the unit ball of  $L^\infty(T \Omega)$. In fact, we have $$(I_{B_1^\infty})^*(\overline{p}) =  \Vert \overline{p} \Vert_{L^1(T^* \Omega)},$$
and since $I_{B_1^\infty}$ is convex and lower semi-continuous, we have
$$I_{B_1^\infty} = (I_{B_1^\infty})^{**} = E_1^*. $$

The last thing we need to do is to check when $A^* p^* = 0$,  because the operator $A^*$ only enters the dual problem via $G^*(A^*p^*)$. By definition of the dual operator, for every  $u \in W^{1,1}(\Omega,d,\nu)$ we have
$$ 0 = \langle u, A^* p^* \rangle  = \langle p^*, Au \rangle = \int_{\partial\Omega} p^*_0 \, T_\Omega u \, d|D\1_{\Omega}|_\nu + \int_\Omega du(\overline{p}^*) \, d\nu.$$
First, take $u \in W_0^{1,1}(\Omega,d,\nu)$; then, this condition reduces to
$$ \int_\Omega du(\overline{p}^*) \, d\nu = 0,$$
so by the definition of the divergence we have $\mbox{div}_0(\overline{p}^*) = 0$. In particular, $X \in \mathcal{D}_0^\infty(\Omega)$.

%

In particular, for any $u \in  W^{1,1}(\Omega,d,\nu)$ we may use the Gauss-Green formula (Theorem \ref{thm:generalgreensformula}) to get
$$ 0 = \langle u, A^* p^* \rangle  = \langle p^*, Au \rangle = \int_{\partial\Omega} p^*_0 \, T_\Omega u \, d|D\1_{\Omega}|_\nu + \int_\Omega du(\overline{p}^*) \, d\nu = \int_{\partial\Omega} p^*_0 \, T_\Omega u \, d|D\1_{\Omega}|_\nu - $$
$$ - \int_\Omega u \,  \mbox{div}_0(\overline{p}^*)  \, d\nu - \int_{\partial\Omega} T_\Omega u \, (\overline{p}^* \cdot \nu_\Omega)^- \, d| D_{\1_{\Omega}} |_\nu = \int_{\partial\Omega} T_\Omega u \, (p_0^* - (\overline{p}^* \cdot \nu_\Omega)^-) \, d| D_{\1_{\Omega}} |_\nu.$$
Hence, because the right hand side disappears for all $u \in W^{1,1}(\Omega,d,\nu)$ and the trace operator from $W^{1,1}(\Omega,d,\nu)$ to $L^1(\partial\Omega,\mathcal{H})$ is surjective, we have $p_0^* = (\overline{p}^* \cdot \nu_\Omega)^-$.

Now, we give the exact form of the dual problem. We first rewrite the dual problem as
\begin{equation}
\sup_{p^* \in L^\infty(\partial\Omega,\mathcal{H}) \times   L^\infty(T \Omega)} \bigg\{ - E_0^*(-p_0^*) - E_1^*(\overline{p}^*) - G^*(A^* p^*) \bigg\}.
\end{equation}
Keeping in mind the above calculations, we set $\mathcal{Z}$ to be the subset of $V^*$ such that the dual problem does not immediately return $-\infty$, namely
\begin{equation}
\mathcal{Z} = \bigg\{ p^* \in L^\infty(\partial\Omega,\mathcal{H}) \times  \mathcal{D}_0^\infty(\Omega): \, \mbox{div}_0(\overline{p}^*)  = 0; \, \| \overline{p}^* \|_\infty \leq 1; \, \| p_0^* \|_\infty \leq 1; \, p_0^* = (\overline{p}^* \cdot \nu_\Omega)^- \bigg\}.
\end{equation}
Hence, we may rewrite the dual problem as
\begin{equation}
\sup_{p^* \in \mathcal{Z}} \bigg\{ - E_0^*(-p_0^*) \bigg\},
\end{equation}
so finally the dual problem takes the form
\begin{equation}\label{eq:finalmetricdual}
\sup_{p^* \in \mathcal{Z}} \bigg\{ - \int_{\partial\Omega} f \, p_0^* \, d|D\1_{\Omega}|_\nu \bigg\}.
\end{equation}
Moreover, for $u_0 \equiv 0$ we have  $E(Au_0) = \int_{\partial\Omega} |f| \, d|D\1_{\Omega}|_\nu < \infty$, $G(u_0) = 0 < \infty$ and $E$ is continuous at $0$. Then, by the Fenchel-Rockafellar Duality Theorem, we have
\begin{equation}\label{DFR1}\inf \eqref{eq:primal} = \sup \eqref{eq:dual}
\end{equation}
and
\begin{equation}\label{DFR2} \hbox{the dual problem \eqref{eq:dual} admits at least one solution.}
\end{equation}
In light of the constraint $p_0^* = (\overline{p}^* \cdot \nu_\Omega)^-$, we may simplify the dual problem a bit. We set
\begin{equation}
\mathcal{Z}' = \bigg\{ \overline{p}^* \in   \mathcal{D}_0^\infty(\Omega): \,  \mbox{div}_0(\overline{p}^*)  = 0; \, \| \overline{p}^* \|_\infty \leq 1 \bigg\}
\end{equation}
(note that in light of $p_0^* = (\overline{p}^* \cdot \nu_\Omega)^-$, the constraint $\| p_0^* \|_\infty \leq 1$ is automatically satisfied) and then we may reduce the dual problem to
\begin{equation}\label{eq:reducedmetricdual}
\sup_{\overline{p}^* \in \mathcal{Z}'} \bigg\{ - \int_{\partial\Omega} f \, (\overline{p}^* \cdot \nu_\Omega)^- \, d|D\1_{\Omega}|_\nu \bigg\}.
\end{equation}
Now, we will prove that if $p^*$ is a solution of the dual problem, then $-\overline{p}^*$ satisfies the conditions in Definition \ref{dfn:1laplace}.

Because the functional $\mathcal{T}_f$ is lower semicontinuous, we may use the $\varepsilon-$subdifferentiability property of minimising sequences, see \cite[Proposition V.1.2]{EkelandTemam}: for any minimising sequence $u_n$ for \eqref{eq:primal} and a maximiser $p^*$ of \eqref{eq:dual}, we have
\begin{equation}\label{eq:epsilonsubdiff1}
0 \leq E(Au_n) + E^*(-p^*) - \langle -p^*, Au_n \rangle \leq \varepsilon_n
\end{equation}
\begin{equation}\label{eq:epsilonsubdiff2}
0 \leq G(u_n) + G^*(A^* p^*) - \langle u_n, A^* p^* \rangle \leq \varepsilon_n
\end{equation}
with $\varepsilon_n \rightarrow 0$. Now, let $u \in BV(\Omega,d,\nu)$ be a minimiser of $\mathcal{T}_f$, given by Proposition \ref{minimizer}. Let us take a sequence $u_n \in W^{1,1}(\Omega,d,\nu)$ which has the same trace as $u$ and converges strictly to $u$ (as in Lemma \ref{lem:goodapproximation}); then, it is a minimising sequence in \eqref{eq:primal}. Equation \eqref{eq:epsilonsubdiff2} is automatically satisfied and equation \eqref{eq:epsilonsubdiff1} gives
\begin{equation}
0 \leq \int_{\partial\Omega} \bigg(|T_\Omega u_n - f| + p_0^* (T_\Omega u_n - f)  \bigg) \,  d|D\1_{\Omega}|_\nu + \bigg( \| du_n \|_{L^1(T^{*}\Omega)} + \int_\Omega du_n(\overline{p}^*) \, d\nu \bigg) \leq \varepsilon_n.
\end{equation}
Because the trace of $u_n$ is fixed (and equal to the trace of $u$), the integral on $\partial\Omega$ does not change with $n$; hence, it has to equal zero. Keeping in mind that $p_0^* = (\overline{p}^* \cdot \nu_\Omega)^-$, we get
$$ (\overline{p}^* \cdot \nu_\Omega)^- \in \mbox{sign}(T_\Omega u - f)\quad \mathcal{H}\mbox{-a.e. on } \partial\Omega.$$
Since the integral on $\partial\Omega$ equals zero and $ \| du_n \|_{L^1(T^{*} \Omega)} = \int_{\Omega} |Du_n|_\nu$, in the integral on $\Omega$ we have
\begin{equation}\label{eq:subdifferentiability}
0 \leq \int_\Omega \bigg( |Du_n|_\nu + du_n(\overline{p}^*) \, d\nu \bigg) \leq \varepsilon_n.
\end{equation}
Finally, keeping in mind that  $\mbox{div}_0(\overline{p}^*) = 0$ and again using the fact that the trace of $u_n$ is fixed and equal to the trace of $u$, by Gauss-Green's formula we get
$$ \int_\Omega du_n(\overline{p}^*) \, d\nu = - \int_{\partial\Omega} (\overline{p}^* \cdot \nu_\Omega)^- \, T_\Omega u_n \, d|D\1_\Omega|_\nu = -\int_{\partial\Omega} (\overline{p}^* \cdot \nu_\Omega)^- \, T_\Omega u \, d|D\1_\Omega|_\nu = \int_\Omega (\overline{p}^*, Du). $$
Hence, equation \eqref{eq:subdifferentiability} takes the form
\begin{equation*}
0 \leq \int_\Omega |Du_n|_\nu - \int_\Omega (-\overline{p}^*, Du)   \leq \varepsilon_n
\end{equation*}
and since $u_n$ converges strictly to $u$, we get that
$$ \int_\Omega |Du|_\nu - \int_\Omega (-p^*, Du) = \lim_{n \rightarrow \infty} \bigg( \int_\Omega |Du_n|_\nu - \int_\Omega (-\overline{p}^*, Du) \bigg) = 0.$$
This together with Proposition \ref{prop:boundonAnzellottipairing} implies that
\begin{equation*}
(-\overline{p}^*, Du) = |Du|_\nu \quad \mbox{as measures in } \Omega,
\end{equation*}
so the pair $(u, -\overline{p}^*)$ satisfies all the conditions in Definition \ref{dfn:1laplace}.
\end{proof}

\begin{theorem}\label{good}
Let $\Omega \subset \mathbb{X}$ be a regular domain such that $\Omega$ and $\mathbb{X} \backslash \overline{\Omega}$ satisfy the assumptions of Theorem \ref{thm:traces}. Let $f \in L^1(\partial \Omega, | D_{\1_{\Omega}} |_\nu)$. For $u \in BV(\Omega,d, \nu)$, the following are equivalent: \\
(i) $u$ is solution of problem \eqref{DirichletME}; \\
(ii) $0 \in \partial \mathcal{T}_f (u)$.
\end{theorem}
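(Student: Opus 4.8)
The plan is to prove the two implications separately. The implication (i)$\Rightarrow$(ii) will be a direct computation combining the Gauss-Green formula of Theorem \ref{thm:generalgreensformula} with the sign condition and the pointwise bound on the pairing from Proposition \ref{prop:boundonAnzellottipairing}, while (ii)$\Rightarrow$(i) is essentially already carried out inside the proof of Theorem \ref{existme} and amounts to invoking Fenchel-Rockafellar duality for the given minimiser. Throughout I recall that, since $\mathcal{T}_f$ is convex (Proposition \ref{semicont}), condition (ii) is equivalent to saying that $u$ minimises $\mathcal{T}_f$, i.e. $\mathcal{T}_f(u) \leq \mathcal{T}_f(v)$ for all $v \in BV(\Omega,d,\nu)$.

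For (i)$\Rightarrow$(ii), I would start from a vector field $X$ as in Definition \ref{dfn:1laplace} and first evaluate $\mathcal{T}_f(u)$. Using $(X,Du) = |Du|_\nu$ together with $\mathrm{div}_0(X) = 0$ in the Gauss-Green formula (Theorem \ref{thm:generalgreensformula}) gives
$$\int_\Omega |Du|_\nu = \int_\Omega (X,Du) = -\int_{\partial\Omega} T_\Omega u \, (X\cdot\nu_\Omega)^- \, d|D\1_\Omega|_\nu,$$
and since $(X\cdot\nu_\Omega)^- \in \mathrm{sign}(T_\Omega u - f)$ we have $|T_\Omega u - f| = (X\cdot\nu_\Omega)^-(T_\Omega u - f)$ pointwise, so that the boundary terms combine into $\mathcal{T}_f(u) = -\int_{\partial\Omega} f\,(X\cdot\nu_\Omega)^-\,d|D\1_\Omega|_\nu$. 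Then, for an arbitrary competitor $v \in BV(\Omega,d,\nu)$, Proposition \ref{prop:boundonAnzellottipairing} together with $\|X\|_\infty \leq 1$ yields $\int_\Omega |Dv|_\nu \geq \int_\Omega (X,Dv)$, and a second application of Gauss-Green rewrites the right-hand side as a boundary integral. Splitting $-T_\Omega v\,(X\cdot\nu_\Omega)^- = -(T_\Omega v - f)(X\cdot\nu_\Omega)^- - f\,(X\cdot\nu_\Omega)^-$ and using $\|(X\cdot\nu_\Omega)^-\|_\infty \leq 1$ (the estimate in Theorem \ref{thm:bufacomimiranda}) gives $|T_\Omega v - f| - (T_\Omega v - f)(X\cdot\nu_\Omega)^- \geq 0$, whence $\mathcal{T}_f(v) \geq -\int_{\partial\Omega} f\,(X\cdot\nu_\Omega)^-\,d|D\1_\Omega|_\nu = \mathcal{T}_f(u)$. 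This is exactly minimality, i.e. (ii).

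For (ii)$\Rightarrow$(i), I would reuse the duality scheme of Theorem \ref{existme} verbatim. By the Fenchel-Rockafellar Theorem \ref{FRTh} the dual problem \eqref{eq:reducedmetricdual} admits a maximiser $\overline{p}^*$, and the optimality relations \eqref{optimality} hold pairing the given primal minimiser $u$ (which exists and satisfies (ii) by hypothesis) with this dual maximiser. I would then run the $\varepsilon$-subdifferentiability argument: choosing a sequence $u_n \in W^{1,1}(\Omega,d,\nu)$ with $T_\Omega u_n = T_\Omega u$ converging strictly to $u$ via Lemma \ref{lem:goodapproximation}, equations \eqref{eq:epsilonsubdiff1}--\eqref{eq:epsilonsubdiff2} force the boundary term to vanish, yielding $(\overline{p}^*\cdot\nu_\Omega)^- \in \mathrm{sign}(T_\Omega u - f)$, and the interior term combined with the Gauss-Green formula gives $(-\overline{p}^*, Du) = |Du|_\nu$; together with $\mathrm{div}_0(\overline{p}^*) = 0$ and $\|\overline{p}^*\|_\infty \leq 1$ this shows $X := -\overline{p}^*$ satisfies all the conditions of Definition \ref{dfn:1laplace}, which is (i).

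The main point requiring care is the (ii)$\Rightarrow$(i) direction: one must check that the optimality conditions from Fenchel-Rockafellar apply to the \emph{given} minimiser $u$ and not only to the particular minimiser produced by Proposition \ref{minimizer}. This is not an obstacle in principle, since the relations \eqref{optimality} hold for any primal solution paired with any dual solution, but it is the step where the trace-preserving strict approximation of Lemma \ref{lem:goodapproximation} is essential, as it is what converts strict convergence into convergence of the pairing (Lemma \ref{lem:continuityofXDu}) while keeping the boundary integrals fixed. The (i)$\Rightarrow$(ii) direction, by contrast, is a self-contained computation once the Gauss-Green formula and the pairing bound are available.
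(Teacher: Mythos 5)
Your proposal is correct, and while the implication (i)$\Rightarrow$(ii) is essentially the paper's own argument, the implication (ii)$\Rightarrow$(i) follows a genuinely different route. For (i)$\Rightarrow$(ii), both you and the paper combine the Gauss-Green formula of Theorem \ref{thm:generalgreensformula} with the three conditions of Definition \ref{dfn:1laplace}, the pairing bound of Proposition \ref{prop:boundonAnzellottipairing}, and the estimate $\|(X\cdot\nu_\Omega)^-\|_\infty \leq 1$; you merely reorganize the chain of inequalities by first computing $\mathcal{T}_f(u) = -\int_{\partial\Omega} f\,(X\cdot\nu_\Omega)^-\,d|D\1_\Omega|_\nu$ and then bounding $\mathcal{T}_f(v)$ below by the same quantity, whereas the paper runs everything in a single string starting from $0=\int_\Omega (w-u)\,\mathrm{div}_0(X)\,d\nu$. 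For (ii)$\Rightarrow$(i), the paper does \emph{not} reopen the duality machinery: it invokes the statement of Theorem \ref{existme} as a black box to produce some solution $\overline{u}$ together with its vector field $\overline{X}$, and then uses the minimality inequality $\mathcal{T}_f(u)\leq\mathcal{T}_f(\overline{u})$ plus one Gauss-Green computation to show that the sum of the two nonnegative quantities $|Du|_\nu(\Omega)-\int_\Omega(\overline{X},Du)$ and the boundary defect is $\leq 0$, forcing both to vanish, i.e.\ the same field $\overline{X}$ works for $u$. Your route instead re-runs the Fenchel-Rockafellar/$\varepsilon$-subdifferentiability argument from the proof of Theorem \ref{existme} with the given minimiser $u$ in place of the one produced by Proposition \ref{minimizer}; this is legitimate, since that argument uses nothing about the minimiser beyond minimality and the trace-preserving strict approximation of Lemma \ref{lem:goodapproximation}, and it is exactly the observation the paper records separately as Corollary \ref{cor:structure}. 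The trade-off: the paper's transfer argument is shorter and self-contained given the \emph{statement} of Theorem \ref{existme}, while yours requires inspecting its \emph{proof} but directly yields the stronger structural fact that any dual maximiser serves all primal minimisers. One caution on wording: your claim that ``the optimality relations \eqref{optimality} hold pairing the given primal minimiser $u$ with this dual maximiser'' is imprecise, because $u$ need not belong to $U=W^{1,1}(\Omega,d,\nu)$, so \eqref{optimality} is not literally applicable --- indeed the primal problem restricted to $W^{1,1}(\Omega,d,\nu)$ may have no solution at all, which is precisely why the $\varepsilon$-subdifferentiability property of minimising sequences must be used; since that is the argument you actually run, the slip is harmless, but the justification should rest on \cite[Proposition V.1.2]{EkelandTemam} rather than on \eqref{optimality}.
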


\begin{proof} Condition (i) implies (ii): given $w \in BV(\Omega, d, \nu)$, we apply the Gauss-Green formula (Theorem \ref{thm:generalgreensformula}) to obtain
$$0 = \int_\Omega (w -u) \, \mbox{div}_0(X)  \, d\nu =- \int_\Omega (X,D(w-u))  - \int_{\partial\Omega} (T_\Omega w - T_\Omega u)  \, (X \cdot \nu_\Omega)^- \, d | D\1_{\Omega} |_\nu$$ $$= - \int_\Omega (X,Dw) + \vert Du \vert_\nu (\Omega)- \int_{\partial\Omega} (T_\Omega w -f)  \, (X \cdot \nu_\Omega)^- \, d | D\1_{\Omega} |_\nu + \int_{\partial\Omega} (T_\Omega u -f)  \, (X \cdot \nu_\Omega)^- \, d | D\1_{\Omega} |_\nu.$$
Then,
$$\mathcal{T}_f (u)= \vert Du \vert_\nu (\Omega) + \displaystyle\int_{\partial \Omega} \vert T_\Omega u - f \vert d |D\1_{\Omega}|_\nu =  \int_\Omega (X,Dw)+ \int_{\partial\Omega} (T_\Omega w - f)  \, (X \cdot \nu_\Omega)^- \, d | D\1_{\Omega} |_\nu$$ $$\leq \vert Dw \vert_\nu (\Omega) + \displaystyle\int_{\partial \Omega} \vert T_\Omega w - f \vert \, d|D\1_{\Omega}|_\nu = \mathcal{T}_f (w).$$
Therefore,  $0 \in \partial \mathcal{T}_f (u)$.
		
\noindent Condition (ii) implies (i): By Theorem \ref{existme}, there is a function $\overline{u} \in BV(\Omega,d,\nu)$ which is a solution of problem \eqref{DirichletME}. Then, there exists a vector field  $\overline{X} \in\mathcal{D}_0^\infty(\Omega)$ with $\| \overline{X} \|_\infty \leq 1$ such that the following conditions hold:
$$ -  \mbox{div}_0(\overline{X}) = 0 \quad \hbox{in} \ \Omega; $$
$$ (\overline{X}, D\overline{u}) = |D\overline{u}|_\nu \quad \mbox{ as measures.}$$
$$ (\overline{X} \cdot \nu_\Omega)^- \in \mbox{sign}(T_\Omega \overline{u}-f) \qquad  | D_{\1_{\Omega}} |_\nu-a.e. \mbox{ on } \partial\Omega.$$
Then, applying Gauss-Green formula we get
$$ \int_\Omega (\overline{X}, D(\overline{u}-u)) = \int_{\partial\Omega} (T_\Omega \overline{u} - T_\Omega u) \, (\overline{X} \cdot \nu_\Omega)^- \, d | D\1_{\Omega} |_\nu,$$
and since $0 \in \partial \mathcal{T}_f (u)$, we get
$$\mathcal{T}_f (u) \leq \mathcal{T}_f (\overline{u}) = \vert D\overline{u} \vert_\nu (\Omega) + \displaystyle\int_{\partial \Omega} \vert T_\Omega \overline{u} - f \vert \, d|D\1_{\Omega}|_\nu $$ $$=\int_\Omega (\overline{X}, Du) + \int_{\partial\Omega} (f - T_\Omega u) \, (\overline{X} \cdot \nu_\Omega)^- \, d | D\1_{\Omega} |_\nu.$$
Hence,
$$\vert Du \vert_\nu (\Omega) - \int_\Omega (\overline{X}, Du) + \int_{\partial\Omega}(\vert T_\Omega u - f \vert - (\overline{X} \cdot \nu_\Omega)^- (f - T_\Omega u))\, d | D\1_{\Omega} |_\nu \leq 0.$$
Since both integrands are nonnegative, we deduce that $\vert Du \vert_\nu = (\overline{X}, Du)$ as measures in $\Omega$ and $\vert T_\Omega u - f \vert = (f - T_\Omega u) \, (\overline{X} \cdot \nu_\Omega)^-$ $| D\1_{\Omega} |_\nu$-a.e. on $\partial \Omega$, so that $$(\overline{X} \cdot \nu_\Omega)^- \in \mbox{sign}(T_\Omega u - f) \qquad \vert D\1_{\Omega} \vert_\nu-a.e. \mbox{ on } \partial\Omega.$$
Since  $\mbox{div}_0(\overline{X}) = 0$ in $\Omega$, we conclude that $u$ is a solution of problem \eqref{DirichletME}.
\end{proof}

In particular, the second part of the proof implies that a vector field $\overline{X}$ which is associated to a single solution of the least gradient problem actually works for all the solutions. This property was first observed in the Euclidean case in \cite{MazRoSe} (using a similar argument). Actually, we can give a better description as a consequence of the proof of Theorem \ref{existme}: any solution to the dual problem \eqref{eq:reducedmetricdual} works for all the solutions of the primal problem. This was first observed in the Euclidean case in \cite{Moradifam} (although the duality theory was applied differently). This is formalised in the following Corollary.

\begin{corollary}\label{cor:structure}
The structure of solutions is determined by a single vector field in the following sense: if  $-X \in \mathcal{D}_0^\infty(\Omega)$ is a solution of the dual problem \eqref{eq:reducedmetricdual}, then $X$ satisfies Definition \ref{dfn:1laplace} for all minimisers $u \in BV(\Omega,d,\nu)$ of the functional $\mathcal{T}_f$.
\end{corollary}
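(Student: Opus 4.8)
The plan is to observe that the derivation in the proof of Theorem \ref{existme} is already uniform in the minimiser: nothing in the passage from the $\varepsilon$-subdifferentiability inequalities to the conditions of Definition \ref{dfn:1laplace} used a specific choice of minimiser $u$, only the fact that $u$ minimises $\mathcal{T}_f$ and that the dual maximiser $p^*$ is held fixed. Thus the statement amounts to reorganising that argument so that the vector field is chosen first and the minimiser afterwards.

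First I would promote the given datum to a maximiser of the full dual problem. Let $-X = \overline{p}^* \in \mathcal{D}_0^\infty(\Omega)$ solve the reduced dual problem \eqref{eq:reducedmetricdual}, and set $p_0^* := (\overline{p}^* \cdot \nu_\Omega)^-$, so that $p^* = (p_0^*, \overline{p}^*)$. Since the constraint $\| p_0^* \|_\infty \leq 1$ is automatic once $p_0^* = (\overline{p}^* \cdot \nu_\Omega)^-$ and $\| \overline{p}^* \|_\infty \leq 1$ (as recorded in the definition of $\mathcal{Z}'$ and $\mathcal{Z}$), the point $p^*$ lies in $\mathcal{Z}$ and attains the same value in \eqref{eq:dual} as $\overline{p}^*$ does in \eqref{eq:reducedmetricdual}; by \eqref{DFR1} this value equals $\inf \eqref{eq:primal}$, so $p^*$ is a solution of \eqref{eq:dual}.

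Next I would take an arbitrary minimiser $u \in BV(\Omega,d,\nu)$ of $\mathcal{T}_f$ and approximate it, via Lemma \ref{lem:goodapproximation}, by a sequence $u_n \in W^{1,1}(\Omega,d,\nu)$ converging strictly to $u$ with $T_\Omega u_n = T_\Omega u$. Since $u$ is a minimiser, $(u_n)$ is a minimising sequence for \eqref{eq:primal}, so \cite[Proposition V.1.2]{EkelandTemam} furnishes the inequalities \eqref{eq:epsilonsubdiff1} and \eqref{eq:epsilonsubdiff2} for this sequence and the fixed maximiser $p^*$, with some $\varepsilon_n \to 0$. From here the computation is verbatim that of Theorem \ref{existme}: because the trace of $u_n$ is frozen, the boundary integral in \eqref{eq:epsilonsubdiff1} is constant in $n$ and therefore vanishes, which yields the sign condition $(X \cdot \nu_\Omega)^- \in \mbox{sign}(T_\Omega u - f)$ on $\partial\Omega$; the remaining interior term, rewritten through the Gauss-Green formula (Theorem \ref{thm:generalgreensformula}) using $\mbox{div}_0(\overline{p}^*) = 0$ and the strict convergence $u_n \to u$, gives $(X, Du) = |Du|_\nu$ as measures. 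Together with $\mbox{div}_0(X) = 0$ and $\| X \|_\infty \leq 1$, these are exactly the conditions of Definition \ref{dfn:1laplace} for $X$ and this $u$.

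The only point requiring care---and the mild obstacle---is the uniformity of the $\varepsilon$-subdifferentiability estimate across minimisers. One must ensure that the inequalities \eqref{eq:epsilonsubdiff1} and \eqref{eq:epsilonsubdiff2} hold against a single, fixed dual maximiser $p^*$ for every minimising sequence, rather than producing a possibly different maximiser for each $u$. This is precisely the form of \cite[Proposition V.1.2]{EkelandTemam}, whose inequalities are asserted for an arbitrary minimising sequence of the primal and an arbitrary solution of the dual; here $p^*$ is fixed once and for all while $u$, and with it the sequence $(u_n)$, ranges over all minimisers. Consequently the single vector field $X = -\overline{p}^*$ satisfies Definition \ref{dfn:1laplace} simultaneously for every minimiser of $\mathcal{T}_f$, as claimed.
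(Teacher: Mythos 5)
Your proof is correct and takes essentially the same approach as the paper: Corollary \ref{cor:structure} is stated there as a direct consequence of the proof of Theorem \ref{existme}, and your argument is precisely that proof rerun with the dual maximiser fixed first (promoted from \eqref{eq:reducedmetricdual} to \eqref{eq:dual} via $p_0^* = (\overline{p}^* \cdot \nu_\Omega)^-$) and the minimiser $u$ taken arbitrary. Your key observation---that the $\varepsilon$-subdifferentiability inequalities of \cite[Proposition V.1.2]{EkelandTemam} hold for any minimising sequence tested against one fixed dual solution---is exactly what the paper's argument relies on.
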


We need the following result, which is also of independent interest as an extension theorem for regular domains.

\begin{lemma}\label{need}
Let $\Omega \subset \mathbb{X}$ be a regular domain which satisfies the assumptions of Theorem \ref{thm:traces}. Let $h \in L^1(\partial \Omega,  |D\1_{\Omega} |_\nu)$. Given $\delta > 0$,  there exists $w \in BV(\Omega, d, \nu)$ such that $T_\Omega(w) = h$ and $$\vert D w \vert_\nu(\Omega) \leq \int_{\partial \Omega} \vert h \vert \, d | D\1_{\Omega} |_\nu + \delta.$$
\end{lemma}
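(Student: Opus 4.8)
The plan is to build $w$ as a sum of two pieces: a \emph{sharp} collar construction for a Lipschitz approximation of $h$, plus a \emph{small correction} supplied by the (non-sharp) extension operator of Theorem \ref{thm:traces}. First, using the density of $\mathrm{Lip}(\partial\Omega)$ in $L^1(\partial\Omega, |D\1_\Omega|_\nu)$, I would fix $h_0 \in \mathrm{Lip}(\partial\Omega)$ with $\int_{\partial\Omega}|h - h_0|\, d|D\1_\Omega|_\nu$ as small as I like, and let $\tilde h_0 \in \mathrm{Lip}(\overline\Omega)$ be a McShane extension, which is bounded and agrees with $h_0$ on $\partial\Omega$.

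The core of the argument is a collar construction for $h_0$. Since $\Omega$ is a regular domain, I have at my disposal its defining sequence $\varphi_t^\Omega$, and I set $w_0 := \tilde h_0 (1 - \varphi_t^\Omega)$. This is Lipschitz on $\overline\Omega$, it vanishes on $\Omega_t$, and it equals $\tilde h_0$ on $\partial\Omega$ (because $\varphi_t^\Omega = 0$ there); being continuous up to the boundary, $T_\Omega w_0 = h_0$. For the total variation I would use $|Dw_0|_\nu = |\nabla w_0|$ $\nu$-a.e.\ (Remark \ref{curvature}) together with the elementary product rule for the slope, $|\nabla w_0| \le (1 - \varphi_t^\Omega)|\nabla \tilde h_0| + |\tilde h_0|\,|\nabla\varphi_t^\Omega|$. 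The first term is supported in $\Omega\setminus\Omega_t$ and is bounded by $\|\,|\nabla\tilde h_0|\,\|_\infty\, \nu(\Omega\setminus\Omega_t) \to 0$ as $t\to 0$, since $\Omega\setminus\Omega_t \downarrow \emptyset$ and $\nu(\Omega)<\infty$. For the second term, Proposition \ref{prop:weakconvergence} gives $|\nabla\varphi_t^\Omega|\,d\nu \rightharpoonup |D\1_\Omega|_\nu$ weakly$^*$, and testing the bounded continuous function $|\tilde h_0|$ against these measures yields $\int_\Omega |\tilde h_0|\,|\nabla\varphi_t^\Omega|\,d\nu \to \int_{\partial\Omega}|h_0|\,d|D\1_\Omega|_\nu$, the limit measure being concentrated on $\partial\Omega$, where $\tilde h_0 = h_0$. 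Hence, for $t$ small enough, $|Dw_0|_\nu(\Omega) \le \int_{\partial\Omega}|h_0|\,d|D\1_\Omega|_\nu + \tfrac\delta4$.

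To correct the trace, I would set $w_1 := \mathrm{Ext}(h - h_0)$ using the bounded extension operator from Theorem \ref{thm:traces}, so that $T_\Omega w_1 = h - h_0$ and $|Dw_1|_\nu(\Omega) \le \|w_1\|_{BV} \le C\,\|h - h_0\|$; here I use that under the standing assumptions $\partial_*\Omega = \partial\Omega$, so that $L^1(\partial\Omega, \mathcal{H})$ and $L^1(\partial\Omega, |D\1_\Omega|_\nu)$ coincide with equivalent norms (via \eqref{Ammb}), making the two smallness conditions on $h-h_0$ consistent. Then $w := w_0 + w_1$ satisfies $T_\Omega w = h$ by linearity of the trace, and
$$|Dw|_\nu(\Omega) \le \int_{\partial\Omega}|h_0|\,d|D\1_\Omega|_\nu + \tfrac\delta4 + C\,\|h-h_0\| \le \int_{\partial\Omega}|h|\,d|D\1_\Omega|_\nu + \delta,$$
provided $h_0$ was chosen close enough to $h$ to control both $\int_{\partial\Omega}|h_0-h|\,d|D\1_\Omega|_\nu$ and $C\|h-h_0\|$ by $\tfrac\delta4$.

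The main obstacle I anticipate is the sharpness bookkeeping: the extension operator yields only $\|w_1\|_{BV}\le C\|h-h_0\|$ with a fixed, possibly large $C$, so it is essential that the Lipschitz piece $w_0$ carries the full leading term $\int_{\partial\Omega}|h_0|$ with constant exactly $1$ — and this is precisely what the weak$^*$ convergence of Proposition \ref{prop:weakconvergence} buys. A secondary point to verify carefully is the weak$^*$ limit identity $\int_\Omega |\tilde h_0|\,|\nabla\varphi_t^\Omega|\,d\nu \to \int_{\partial\Omega}|h_0|\,d|D\1_\Omega|_\nu$: here one uses that $\overline\Omega$ is compact (the space is proper), so $|\tilde h_0|$ may be taken as a compactly supported continuous test function and no mass escapes, the total masses converging by the first part of Proposition \ref{prop:weakconvergence}.
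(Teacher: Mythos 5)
Your proof is correct, and it shares with the paper's proof the same two key mechanisms: the sharp collar estimate (multiplying a Lipschitz function by $1-\varphi_t^\Omega$ and using Proposition \ref{prop:weakconvergence} to obtain the boundary integral with constant exactly $1$), and a non-sharp, $C$-bounded extension to absorb a small trace error. The difference lies in how the Lipschitz object carrying the datum is produced. The paper goes into the domain first: it extends $h$ to $v=\mathrm{Ext}(h)\in BV(\Omega,d,\nu)$, takes a strict Lipschitz approximation $v_n\in\mathrm{Lip}(\overline\Omega)$ of $v$ (Lemma \ref{lem:lipschitzapproximation}), collars $v_n\varphi_\varepsilon$, and controls the trace error $h-T_\Omega v_n$ via the continuity of the trace operator under strict convergence before correcting it with the extension of \cite[Proposition 4.1]{MSS}. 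You instead approximate on the boundary: $h_0\in\mathrm{Lip}(\partial\Omega)$ close to $h$ in $L^1(\partial\Omega,|D\1_\Omega|_\nu)$, McShane-extended into $\overline\Omega$ and then collared, with the error $h-h_0$ corrected by $\mathrm{Ext}$. Your route is slightly more economical — it needs neither Lemma \ref{lem:lipschitzapproximation} nor the continuity of traces under strict convergence, only density of $\mathrm{Lip}(\partial\Omega)$ in $L^1$ of a finite Radon measure on the compact boundary (which the paper itself uses elsewhere, in the proof of Theorem \ref{thm:bufacomimiranda}); the paper's route keeps the argument entirely within operators it has already set up. Two small remarks. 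First, both proofs rely on the identification of $L^1(\partial\Omega,\mathcal H)$ with $L^1(\partial\Omega,|D\1_\Omega|_\nu)$ when feeding a function controlled in the perimeter-measure norm into the extension operator of Theorem \ref{thm:traces}; the paper does this silently, while you flag it via $\partial_*\Omega=\partial\Omega$ and \eqref{Ammb} — this hypothesis is not literally listed in Theorem \ref{thm:traces}, so strictly speaking it is an additional (implicit, and shared) assumption, not a defect of your argument relative to the paper's. Second, your handling of the weak$^*$ limit (testing a bounded continuous, non-compactly-supported function against measures all supported in the compact set $\overline\Omega$, with total masses converging) is exactly the point that makes the constant equal to $1$, and matches the paper's step $\int_{\mathbb{X}}|v_n||\nabla\varphi_\varepsilon|\,d\nu\to\int_{\partial\Omega}|T_\Omega v_n|\,d|D\1_\Omega|_\nu$.
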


 This result is an improvement of \cite[Proposition 4.1]{MSS}, where the inequality in the statement of Lemma \ref{need} is proved with a multiplicative constant on the right hand side. We prove that on regular domains we may take the constant to be equal to one.

\begin{proof} Given $h \in L^1(\partial \Omega,  |D\1_{\Omega} |_\nu)$, by Theorem \ref{thm:traces} there exists an extension $v := {\rm Ext}(h) \in BV(\Omega, d, \nu)$ with $T_\Omega v =h$. By Lemma \ref{lem:lipschitzapproximation}, there exists a sequence $v_n \in \mbox{Lip}(\overline{\Omega}) \cap BV(\Omega,d,\nu)$ such that $v_n \rightarrow v$ strictly in $BV(\Omega,d,\nu)$. We will also denote by $v_n$ its Lipschitz extension to $\mathbb{X}$.

Since the domain $\Omega$ is regular, let us take its defining sequence, i.e.
$$ \varphi_\varepsilon = \threepartdef{0}{x \in \Omega_\varepsilon}{\frac1\varepsilon \mbox{dist}(x, \Omega_\varepsilon)}{x \in \Omega \backslash \Omega_\varepsilon}{1}{x \in \Omega^c.}$$
By \cite[Proposition 4.11]{BCM} we have that $|\nabla \varphi_\varepsilon| \rightharpoonup |D\1_\Omega|$. Let $v_n^{\varepsilon}:= v_n \varphi_\varepsilon \in \mbox{Lip}(\overline{\Omega})$. Then,
$$ \int_\Omega |v_n| |\nabla \varphi_\varepsilon| \, d\nu \leq \int_{\mathbb{X}} |v_n| |\nabla \varphi_\varepsilon| \, d\nu \rightarrow \int_\mathbb{X} |v_n| \,d |D\1_{\Omega}|_\nu = \int_{\partial\Omega} |T_\Omega v_n| \,  d|D\1_\Omega|_\nu.$$
Moreover, we have that
$$ \int_\Omega |\nabla v_n| |\varphi_\varepsilon| \, d\nu \rightarrow 0,$$
since the measure of the support of $\varphi_\varepsilon$ goes to zero (by the regularity of the domain). Hence,
$$\lim_{\varepsilon \to 0} \vert Dv_n^{\varepsilon} \vert_\nu(\Omega) = \lim_{\varepsilon \to 0} \int_\Omega |\nabla(u \varphi_\varepsilon)| \, d\nu \leq \int_{\partial\Omega} |T_\Omega v_n| \, d|D\1_\Omega|_\nu.$$
Denote $h_n = h - T_\Omega v_n$. Since $v_n \rightarrow v$ strictly in $BV(\Omega,d,\nu)$, we have $h_n \rightarrow 0$ in $L^1(\partial\Omega,|D\1_\Omega|_\nu)$. Hence, we may rewrite the above inequality as
\begin{equation}\label{eq:estimateonvnepsilon}
\lim_{\varepsilon \to 0} \vert Dv_n^{\varepsilon} \vert_\nu(\Omega) \leq \int_{\partial\Omega} |h| \, d|D\1_\Omega|_\nu + \int_{\partial\Omega} |h_n| \,  d|D\1_\Omega|_\nu.
\end{equation}
Now, by \cite[Proposition 4.1]{MSS} there exists $w_n \in \mbox{Lip}_{loc}(\Omega) \cap BV(\Omega,d,\nu)$ with $T_\Omega w_n = h_n$ such that
\begin{equation}\label{eq:estimateonwn}
|Dw_n|_\nu(\Omega) \leq C \int_{\partial\Omega} |h_n| \, d|D\1_\Omega|_\nu.
\end{equation}
Finally, let us write $u_n^\varepsilon = v_n^\varepsilon + w_n \in \mbox{Lip}_{loc}(\Omega) \cap BV(\Omega,d,\nu)$. Then, $T_\Omega u_n^\varepsilon = h$. Moreover, since $h_n \rightarrow 0$ in $L^1(\partial\Omega, |D\1_\Omega|_\nu)$ and
\begin{equation*}
|Du_n^\varepsilon|_\nu(\Omega) \leq |Dv_n^\varepsilon|_\nu(\Omega) + |Dw_n|_\nu(\Omega),
\end{equation*}
inequalities \eqref{eq:estimateonvnepsilon} and \eqref{eq:estimateonwn} imply that given $\delta > 0$, for sufficiently small $\varepsilon$ and sufficiently large $n$ we have
$$\vert D u_n^\varepsilon \vert_\nu(\Omega) \leq  \int_{\partial \Omega} \vert h \vert \, d | D\1_{\Omega} |_\nu + \delta.$$
\end{proof}

\begin{corollary}\label{good1}
Let $\Omega \subset \mathbb{X}$ be a regular domain such that $\Omega$ and $\mathbb{X} \backslash \overline{\Omega}$ satisfy the assumptions of Theorem \ref{thm:traces}. Let $f \in L^1(\partial \Omega, | D\1_{\Omega} |_\nu)$. For $u \in BV(\Omega,d,\nu)$, satisfying $T_\Omega(u) = f$ on $\partial \Omega$, the following are equivalent:
\begin{itemize}
\item[(i)] $u$ is solution of problem \eqref{DirichletME},
\item[(ii)] $u$ is a minimiser of $\mathcal{T}_f$,
\item[(iii)] $u$ is a function of least gradient in $\Omega$.
\end{itemize}
\end{corollary}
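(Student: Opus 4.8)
The plan is to first dispose of the equivalence (i)~$\Leftrightarrow$~(ii), which is essentially already contained in Theorem~\ref{good}, and then to concentrate on the equivalence (ii)~$\Leftrightarrow$~(iii), which carries the real content of the corollary. For (i)~$\Leftrightarrow$~(ii): by the very definition of the subdifferential of a convex functional, $0 \in \partial \mathcal{T}_f(u)$ holds if and only if $\mathcal{T}_f(u) \leq \mathcal{T}_f(w)$ for every $w \in L^1(\Omega,\nu)$, that is, if and only if $u$ is a minimiser of $\mathcal{T}_f$. Combined with the equivalence (i)~$\Leftrightarrow$~$(0 \in \partial\mathcal{T}_f(u))$ established in Theorem~\ref{good}, this yields (i)~$\Leftrightarrow$~(ii) at once; note that here the standing hypothesis $T_\Omega u = f$ plays no role.

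For the direction (ii)~$\Rightarrow$~(iii), I would argue directly. Suppose $u$ minimises $\mathcal{T}_f$ and $T_\Omega u = f$, and let $v \in BV(\Omega,d,\nu)$ be an arbitrary competitor with $T_\Omega v = T_\Omega u = f$. Then the boundary penalties in $\mathcal{T}_f(u)$ and $\mathcal{T}_f(v)$ both vanish, so that $\mathcal{T}_f(u) = |Du|_\nu(\Omega)$ and $\mathcal{T}_f(v) = |Dv|_\nu(\Omega)$; the minimality $\mathcal{T}_f(u) \leq \mathcal{T}_f(v)$ then reads precisely as $|Du|_\nu(\Omega) \leq |Dv|_\nu(\Omega)$, which is exactly Definition~\ref{lg2}.

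The converse (iii)~$\Rightarrow$~(ii) is where the work lies and where Lemma~\ref{need} enters in an essential way. Suppose $u$ is a function of least gradient with $T_\Omega u = f$, and let $v \in BV(\Omega,d,\nu)$ be arbitrary (with, in general, a trace different from $f$). The idea is to correct the trace of $v$ without inflating its total variation by more than the boundary penalty. Setting $h := f - T_\Omega v \in L^1(\partial\Omega, |D\1_\Omega|_\nu)$ and fixing $\delta > 0$, Lemma~\ref{need} furnishes $w \in BV(\Omega,d,\nu)$ with $T_\Omega w = h$ and $|Dw|_\nu(\Omega) \leq \int_{\partial\Omega} |h| \, d|D\1_\Omega|_\nu + \delta$. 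Then $v + w$ has trace $T_\Omega v + h = f = T_\Omega u$, so the least gradient property applied to $v+w$, together with the subadditivity of the total variation, gives
\begin{equation*}
|Du|_\nu(\Omega) \leq |D(v+w)|_\nu(\Omega) \leq |Dv|_\nu(\Omega) + \int_{\partial\Omega} |T_\Omega v - f| \, d|D\1_\Omega|_\nu + \delta = \mathcal{T}_f(v) + \delta.
\end{equation*}
Letting $\delta \to 0$ yields $\mathcal{T}_f(u) = |Du|_\nu(\Omega) \leq \mathcal{T}_f(v)$ for every $v \in BV(\Omega,d,\nu)$, i.e. $u$ minimises $\mathcal{T}_f$.

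The main obstacle is precisely the sharpness of the last estimate: it is here that the constant equal to $1$ in Lemma~\ref{need} is indispensable. Had we only the weaker bound with a multiplicative constant $C > 1$, as in \cite[Proposition 4.1]{MSS}, the argument above would produce merely $|Du|_\nu(\Omega) \leq |Dv|_\nu(\Omega) + C\int_{\partial\Omega}|T_\Omega v - f|\,d|D\1_\Omega|_\nu$, which does not dominate $\mathcal{T}_f(v)$ and hence would fail to close the implication. This is the reason we took care to establish the sharp extension estimate for regular domains in Lemma~\ref{need}.
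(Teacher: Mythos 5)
Your proposal is correct and follows essentially the same route as the paper: (i)~$\Leftrightarrow$~(ii) via Theorem~\ref{good} and the definition of the subdifferential, (ii)~$\Rightarrow$~(iii) by cancelling the boundary penalties, and (iii)~$\Rightarrow$~(ii) by the trace-correction argument with Lemma~\ref{need}, which in the paper appears with the opposite sign convention (a competitor $v-w$ with $T_\Omega w = T_\Omega v - f$ instead of your $v+w$ with $T_\Omega w = f - T_\Omega v$). Your closing remark on the indispensability of the sharp constant~$1$ in Lemma~\ref{need} matches the paper's own motivation for proving that lemma.
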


\begin{proof}
By Theorem \ref{good}, we know that (i) and (ii) are equivalent.
		
Condition (ii) implies (iii): given $v \in BV(\Omega, d, \nu)$ such that $T_\Omega v = T_\Omega u = f$, we have
$$ \int_\Omega |Du|_\nu = \mathcal{T}_f(u) \leq  \mathcal{T}_f(v) =\int_\Omega |Dv|_\nu.$$
		
Condition (iii) implies (ii): fixed $v \in BV(\Omega, d, \nu)$, we need to show that $\mathcal{T}_f(u)  \leq  \mathcal{T}_f(v)$. By Lemma \ref{need},  given $\varepsilon > 0$,  there exists $w \in BV(\Omega, d, \nu)$ such that $T_\Omega(w) = T_\Omega(v) - f$ and $$\vert D w \vert_\nu(\Omega) \leq \int_{\partial \Omega} \vert T_\Omega(v) - f \vert \, d| D\1_{\Omega} |_\nu + \varepsilon.$$
Now, since $T_\Omega(v - w) = f = T_\Omega(u)$, by (iii), we have
$$\vert Du \vert_\nu (\Omega) \leq \vert D(v-w) \vert_\nu (\Omega) \leq \vert Dv \vert_\nu (\Omega)+ \vert Dw \vert_\nu (\Omega) $$
$$  \leq\vert Dv \vert_\nu (\Omega)+ \int_{\partial \Omega} \vert T_\Omega(v) - f \vert \,  d| D\1_{\Omega} |_\nu + \varepsilon.$$
Therefore,
$$\mathcal{T}_f(u) = \vert Du \vert_\nu (\Omega) \leq \mathcal{T}_f(v) + \varepsilon,$$
and since $\varepsilon >0$ is arbitrary, we obtain that $\mathcal{T}_f(u)  \leq  \mathcal{T}_f(v)$.
\end{proof}

Finally, let us comment on the connection between least gradient functions and sets with area-minimising boundary. In the Euclidean case, the first part, namely that boundaries of superlevel sets of least gradient functions are area-minimising, was established in \cite[Theorem 1]{BdGG}. The second part is that the converse holds true and was proved in \cite{SWZ}. To study the metric case, we will use the following definition.

\begin{definition}{\rm Let $E \subset \mathbb{X}$ be a set of finite perimeter in $\Omega$. We say that $\partial E$ is {\it area-minimising} in $\Omega$ if $\1_E$ is a function of least gradient in $\Omega$.
}
\end{definition}

Note that in the particular case of $(\R^N, d_{Eucl}, \mathcal{L}^N)$, the above concept coincides with the classical concept of set with area-minimising boundary (see \cite{SWZ} or \cite{SZ1}).

 Using a similar argument as in \cite{BdGG}, its metric analogue was established (with a slightly different definition of least gradient functions) in \cite[Lemma 3.5]{HKLS}. To be exact, it was shown that if $u$ is function of least gradient in $\Omega$, then for each $\lambda \in \R$ the set $\partial E_\lambda(u)$ is area-minimising in $\Omega$, where  $E_\lambda(u):= \{ x \in \Omega \ : \ u(x) \geq \lambda \}$. We will now prove that also the other implication is in the metric setting, and that it requires that this property holds only for almost all $\lambda \in \mathbb{R}$. Hence, whenever the superlevel sets are area-minimising for almost all $\lambda$, they are minimal for every $\lambda$. We will formulate the result in both directions and also give the proof of the metric analogue of the result \cite{BdGG} for the sake of completeness.

\begin{theorem}\label{leastMinim}
Let $\Omega \subset \mathbb{X}$ be an open set. Then, $u \in BV(\Omega,d,\nu)$ is function of least gradient in $\Omega$ if and only if $\partial E_\lambda(u)$ is area-minimising in $ \Omega$ for almost all (equivalently: all) $\lambda \in \R$.
\end{theorem}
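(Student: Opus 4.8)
The plan is to reduce the whole statement to two classical tools of the theory of sets of finite perimeter, both available in this metric setting: the coarea formula
$$|Du|_\nu(U) = \int_{-\infty}^{\infty} \mathrm{Per}_\nu(E_\lambda(u), U)\, d\lambda$$
valid for every $u \in BV(\Omega,d,\nu)$ and every open $U \subseteq \Omega$ (see \cite{Miranda1}), together with the strong subadditivity (submodularity) of the perimeter, namely $\mathrm{Per}_\nu(A\cup B, U) + \mathrm{Per}_\nu(A\cap B, U) \le \mathrm{Per}_\nu(A, U) + \mathrm{Per}_\nu(B, U)$. Since $\Omega$ is merely open, no trace is available and both ``least gradient'' and ``area-minimising'' are understood in the sense of \eqref{eq:leastgradient}, i.e. with competitors agreeing with the given function outside a compact subset of $\Omega$. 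I would prove the two implications (P)$\Rightarrow$(R), that a least gradient function has \emph{all} superlevel sets area-minimising, and (Q)$\Rightarrow$(P), that area-minimality of \emph{almost every} superlevel set forces $u$ to be of least gradient. Because (R)$\Rightarrow$(Q) is trivial, these two close the equivalence loop, and in particular show that the a.e. condition automatically upgrades to the condition for all $\lambda$.

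For (Q)$\Rightarrow$(P), let $w \in BV(\Omega,d,\nu)$ coincide with $u$ outside a compact set $K \Subset \Omega$. Then for \emph{every} $\lambda$ one has $E_\lambda(w) \triangle E_\lambda(u) \subseteq K$ up to a $\nu$-null set, so $E_\lambda(w)$ is an admissible competitor for $E_\lambda(u)$. Using that $\partial E_\lambda(u)$ is area-minimising for a.e. $\lambda$ gives $\mathrm{Per}_\nu(E_\lambda(u),\Omega) \le \mathrm{Per}_\nu(E_\lambda(w),\Omega)$ for a.e. $\lambda$; integrating in $\lambda$ and applying the coarea formula to $u$ and to $w$ yields $|Du|_\nu(\Omega) \le |Dw|_\nu(\Omega)$, as desired. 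The exceptional null set of levels does not affect the integral, and this is precisely where the improvement from ``all $\lambda$'' to ``a.e. $\lambda$'' comes from.

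For (P)$\Rightarrow$(R), fix $\lambda$, set $E = E_\lambda(u)$, and let $F$ be a competitor with $F \triangle E \Subset \Omega$. I would perform the standard surgery $w := \max(u,\lambda)$ on $F$ and $w := \min(u,\lambda)$ on $\Omega\setminus F$; a short case check shows $w = u$ outside $F\triangle E$, so $w$ is an admissible competitor and the least gradient property gives $|Du|_\nu(\Omega) \le |Dw|_\nu(\Omega)$. Computing the superlevel sets of $w$ (they equal $F\cup E_s(u)$ for $s \le \lambda$ and $F\cap E_s(u)$ for $s > \lambda$), inserting this into the coarea formula for $w$, and comparing term by term with the coarea formula for $u$ via submodularity applied to $F$ and $E_s(u)$ produces the single-level inequality $\mathrm{Per}_\nu(E,\Omega) \le \mathrm{Per}_\nu(F,\Omega)$. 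This is the metric analogue of the Bombieri--De Giorgi--Giusti argument and is essentially \cite[Lemma 3.5]{HKLS}, included here for completeness. I expect this direction to be the main obstacle: the delicate point is not a new idea but the extraction of a pointwise-in-$\lambda$ conclusion from an inequality that a priori holds only after integration, which requires careful bookkeeping of the nested family $s \mapsto E_s(u)$ near the level $s=\lambda$. The converse (Q)$\Rightarrow$(P), by contrast, is immediate once the coarea formula is in hand.
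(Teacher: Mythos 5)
Your overall architecture (two implications plus the trivial one closing the loop, so that ``a.e.\ $\lambda$'' upgrades automatically to ``all $\lambda$'') matches the paper's, and your implication (Q)$\Rightarrow$(P) is correct and is essentially the paper's own first step: compare superlevel sets of $u$ and of the competitor, use minimality at a.e.\ level, and integrate with the coarea formula (the paper runs it with trace-equal competitors, you with compactly supported perturbations; the mechanism is the same). The genuine gap is in (P)$\Rightarrow$(R). The surgery $w=\max(u,\lambda)$ on $F$, $w=\min(u,\lambda)$ on $\Omega\setminus F$ gives, via coarea, exactly \emph{one} scalar inequality,
\begin{equation*}
\int_{-\infty}^{\lambda}\mathrm{Per}_\nu(E_s(u))\,ds+\int_{\lambda}^{\infty}\mathrm{Per}_\nu(E_s(u))\,ds\;\le\;\int_{-\infty}^{\lambda}\mathrm{Per}_\nu(F\cup E_s(u))\,ds+\int_{\lambda}^{\infty}\mathrm{Per}_\nu(F\cap E_s(u))\,ds
\end{equation*}
(all perimeters taken in $\Omega$), and there is no ``term by term'' comparison to be had: submodularity couples the union and the intersection \emph{at the same level} $s$, whereas here the union terms occur only for $s\le\lambda$ and the intersection terms only for $s>\lambda$, so it never pairs two quantities that both appear in the inequality. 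If you try to force it, the termwise estimate you would need for $s\le\lambda$, namely $\mathrm{Per}_\nu(F\cup E_s(u))-\mathrm{Per}_\nu(E_s(u))\le\mathrm{Per}_\nu(F)-\mathrm{Per}_\nu(E)$, is supplied by submodularity only if $\mathrm{Per}_\nu(E)\le\mathrm{Per}_\nu(F\cap E_s(u))$, i.e.\ only if $E$ is already known to be minimal against the admissible competitor $F\cap E_s(u)$ --- which is what you are trying to prove. A single integrated inequality cannot be disintegrated into the level-$\lambda$ statement without first localising in $s$, and that localisation is precisely the ``bookkeeping'' you defer.

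The localisation is where the real work lies, and the paper does it by a different mechanism. One must first prove that truncations of a least gradient function are least gradient (the paper does this via the coarea splitting $u=\max\{0,u-\lambda\}+\min\{\lambda,u\}$); only then can your inequality be run for $u$ truncated between $\lambda-\tau$ and $\lambda+\tau$, so that both integrals live on $(\lambda-\tau,\lambda+\tau]$, after which one divides by $\tau$ and uses lower semicontinuity of the perimeter as $\tau\downarrow0$. Even then the limit yields only $\mathrm{Per}_\nu(F\cap E_\lambda(u))+\mathrm{Per}_\nu(F\cup\{u>\lambda\})\le2\,\mathrm{Per}_\nu(F)$, not $\mathrm{Per}_\nu(E_\lambda(u))\le\mathrm{Per}_\nu(F)$; closing the argument requires re-running it with the competitors $F\cap E_\lambda(u)$ and $F\cup E_\lambda(u)$, and levels with $\nu(\{u=\lambda\})>0$ still need separate treatment. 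The paper avoids this circle entirely: it shows the rescaled truncations $u_{\varepsilon,\lambda}=\frac1\varepsilon\min\{\varepsilon,\max\{0,u-\lambda\}\}$ are least gradient, notes that $u_{\varepsilon,\lambda}\to\1_{E_\lambda(u)}$ in $L^1$ whenever $\nu(\{u=\lambda\})=0$, and invokes the metric version of Miranda's stability theorem \cite[Proposition 3.1]{HKLS} ($L^1$-limits of least gradient functions are least gradient), handling the exceptional levels by approximating $\lambda_n\uparrow\lambda$ and applying stability once more. That stability theorem --- or an outright citation of \cite[Lemma 3.5]{HKLS}, which you mention but do not actually rely on --- is the missing ingredient in your plan. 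Two further points: for unbounded $u$ your competitor $w=u+(\lambda-u)\1_{F\triangle E}$ need not even belong to $BV(\Omega,d,\nu)$, since its variation picks up a surface term along $\partial_*(F\triangle E)$ of possibly infinite mass (another reason truncation must come first); and the paper's theorem concerns the trace-based Definition~\ref{lg2}, so your purely compact-support reading needs Remark~\ref{equivalence} (hence $\mathcal{H}(\partial\Omega)<\infty$, which the paper's proof explicitly uses) to recover the statement as written.
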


\begin{proof}
First, assume that $\partial E_\lambda(u)$ is area-minimising in $ \Omega$ for almost all $\lambda \in \R$. Let $v \in BV(\Omega, d, \nu)$ such that $T_\Omega u = T_\Omega v$.  By Definition \ref{dfn:trace}, for all except countably many $\lambda \in \mathbb{R}$ we have $T_\Omega(\1_{E_\lambda(u)}) = T_\Omega(\1_{E_\lambda(v)})$. For such $\lambda$, by our assumption we have
$${\rm Per}_{\nu}(E_\lambda(u), \Omega) \leq {\rm Per}_{\nu}(E_\lambda(v), \Omega).$$
Therefore, by the coarea formula, we get
$$ \int_\Omega |Du|_\nu \leq  \int_\Omega |Dv|_\nu,$$
hence $u$ is function of least gradient in $\Omega$.

Assume now that $u$ is function of least gradient in $\Omega$. Let $u_1:= \max \{0, u - \lambda \}$ and $u_2:= u - u_1 = \min \{\lambda, u \}$. By the coarea formula, we have
$$ \int_\Omega |Du|_\nu =  \int_\Omega |Du_1|_\nu  + \int_\Omega |Du_2|_\nu.$$
Take $v \in BV(\Omega, d, \nu)$ with compact support. Then, since $u$ is function of least gradient in $\Omega$, we have
$$ \int_\Omega |Du|_\nu =  \int_\Omega |D(u+v)|_\nu \leq  \int_\Omega |D(u_1+v)|_\nu +  \int_\Omega |Du_2|_\nu$$
and
$$ \int_\Omega |Du|_\nu =  \int_\Omega |D(u+v)|_\nu \leq  \int_\Omega |D(u_2+v)|_\nu +  \int_\Omega |Du_1|_\nu.$$
Now, since we are assuming that $\mathcal{H}(\partial \Omega) < \infty$, by Remark \ref{equivalence}, we have that $u_1$ and $u_2$ are functions of least gradient in $\Omega$. Therefore, for every $\varepsilon >0$, the function
$$u_{\varepsilon, \lambda}:= \frac{1}{\varepsilon} \min \{\varepsilon, \max \{0, u - \lambda\}\}$$
is a function of least gradient in $\Omega$.

 Notice that for all but countably many $\lambda \in \mathbb{R}$
\begin{equation}\label{e1AMN}
\nu(\{ x \in \Omega \ : \ u(x) = \lambda \}) = 0.
\end{equation}
 For such $\lambda$, we have
$$u_{\varepsilon, \lambda} \to \1_{E_{\lambda}(u)} \quad \hbox{in} \ \ L^1(\Omega, \nu), \ \ \hbox{as} \ \ \varepsilon \downarrow  0.$$
 Hence, by \cite[Proposition 3.1]{HKLS}, which is a metric version of Miranda's theorem (\cite{Mir}), $\1_{E_\lambda(u)}$ is also a function of least gradient in $\Omega$.

On the other hand, if $\nu(\{ x \in \Omega \ : \ u(x) = \lambda \}) >0$, there exists a sequence $\{ \lambda_n \}$ such that $\lambda_n < \lambda$, $\lambda_n \to \lambda$, $\nu(\{ x \in \Omega \ : \ u(x) = \lambda_n \}) =0$ and
 $$\1_{E_{\lambda_n}}(u) \to \1_{E_{\lambda}(u)} \quad \hbox{in} \ \ L^1(\Omega, \nu), \ \ \hbox{as} \ \ n \to +\infty.$$
 Now, by the previous result, each $\1_{E_{\lambda_n}}(u)$ is a function of least gradient in $\Omega$, and therefore, aplying again the metric version of Miranda's theorem, we have that $\1_{E_{\lambda}}(u)$ is a function of least gradient in $\Omega$, and
consequently $\partial E_\lambda(u)$ is area-minimising in $\Omega$.

\end{proof}

\noindent {\bf Acknowledgment.} The first author has been partially supported by the DFG-FWF project FR 4083/3-1/I4354, by the OeAD-WTZ project CZ 01/2021, and by the project 2017/27/N/ST1/02418 funded by the National Science Centre, Poland. The second author has been partially supported by the Spanish MCIU and FEDER, project PGC2018-094775-B-100.

\end{document}